\newfont{\gothic}{eufm10}
   \def\C{{\CC}} \def\CC{{\mathbb{C}}}
\def\Z{{\mathbb{Z}}}                   \def\R{{\RR}} \def\Q{{\mathbb{Q}}}
\def\RR{{\mathbb{R}}}        \def\N{{\mathbb{N}}}
        \newtheorem{theorem}{Theorem}[section]
\newtheorem{lemma}[theorem]{Lemma}
\newtheorem{proposition}[theorem]{Proposition}
\newtheorem{definition1}[theorem]{Definition}
\newenvironment{definition}{\begin{definition1}\rm}{\end{definition1}}
\newenvironment{proof}{\addvspace\baselineskip\noindent{\it
Proof:}\quad}{\hspace*{\fill}         $\Box$\par\addvspace\baselineskip}
\newtheorem{remark1}[theorem]{Remark}
\newtheorem{example1}[theorem]{Example}
\newenvironment{example}{\begin{example1}\rm}{\end{example1}}
\def\barray{\begin{eqnarray*}}             \def\earray{\end{eqnarray*}}
\def\beq{\begin{equation}} \def\eeq{\end{equation}}
\makeatletter \title{On the destruction of minimal foliations}
\author{Blaz Mramor\thanks{Department of Mathematics, VU University Amsterdam, The Netherlands, {\tt b.mramor@vu.nl}.} \ and Bob Rink\thanks{Department of Mathematics, VU University Amsterdam, The Netherlands, {\tt b.w.rink@vu.nl}.}\ . }
\begin{document}  \hyphenation{boun-da-ry mo-no-dro-my sin-gu-la-ri-ty ma-ni-fold ma-ni-folds re-fe-rence se-cond se-ve-ral dia-go-na-lised con-ti-nuous thres-hold re-sul-ting fi-nite-di-men-sio-nal ap-proxi-ma-tion pro-per-ties ri-go-rous mo-dels mo-no-to-ni-ci-ty pe-ri-o-di-ci-ties mi-ni-mi-zer mi-ni-mi-zers know-ledge ap-proxi-mate pro-per-ty}
\newcommand{\X}{\mathbb{X}}

\newcommand{\p}{\partial}
\maketitle
\noindent

\abstract{\noindent Monotone variational recurrence relations such as the Frenkel-Kontorova lattice, arise in solid state physics, conservative lattice dynamics and as Hamiltonian twist maps. \\ 
\indent For such recurrence relations, Aubry-Mather theory guarantees the existence of solutions of every rotation number $\omega \in \R$. They are the action minimizers that constitute the Aubry-Mather set. When $\omega$ is irrational, the Aubry-Mather set is either connected or a Cantor set. A connected Aubry-Mather set is called a {\it minimal foliation}. In the case of twist maps, it  describes an invariant circle, while in solid state physics it corresponds to a continuum of ground states. A Cantor Aubry-Mather set is called a {\it minimal lamination}. \\
\indent In this paper we prove that when the rotation number of a minimal foliation is either rational or easy to approximate by rational numbers, then the foliation can be destroyed into a lamination by an arbitrarily small smooth perturbation of the recurrence relation. This generalizes a theorem of Mather for twist maps to general recurrence relations.}

\section{Introduction}
In this paper, we look for
real-valued sequences $x:\Z\to \R$ that satisfy a monotone variational recurrence relation of the form
\begin{align}\label{recrel}
\sum_{j\in \Z} \p_iS_j(x) = 0\ \mbox{for all} \ i\in \Z\ .
\end{align}
In Section \ref{examplessubsection} we will explain how such recurrence relations arise in solid state physics, in the study of lattice mechanical systems and in the theory of Hamiltonian twist maps. Concrete examples to have in mind are generalized Frenkel-Kontorova crystal models with interactions of finite range, such as the one described by the recurrence relation
\begin{align}\label{FKnextstat}
x_{i+2} + x_{i+1}  - 4x_i + x_{i-1} +x_{i-2} -V'(x_i)  = 0 \ \mbox{for all}  \ i \in \Z \ .
\end{align}
Equation (\ref{FKnextstat}) defines the equilibrium states of a crystal in which the atoms are attracted by their nearest and next-nearest neighbors and also feel the influence of a conservative periodic background force.\\ 
\indent
We are interested in minimal foliations for (\ref{recrel}). A minimal foliation is a certain continuous and well-ordered family of solutions. In the context of a Hamiltonian twist map, it describes an invariant circle, while in the setting of solid state physics and lattice mechanics, it corresponds to a continuum of equilibrium states. \\
\indent The main result of this paper is a converse KAM theorem for minimal foliations. It concerns the case that the rotation number of the foliation is easy to approximate by rational numbers, for example when this rotation number is a Liouville number. We show that the foliation can then be destroyed by changing the recurrence relation by an arbitrarily small $C^{\infty}$ perturbation. This means that minimal foliations are unstable under small perturbations unless their rotation number is very irrational. \\
\indent This destruction result is a generalization of a result obtained by Mather \cite{Matherdestruction} for twist maps. We present a quite different proof though, that works for general recurrence relations of the form (\ref{recrel}).

\subsection{Requirements on the potentials} \label{problemsetup}
Let us clarify the meaning of (\ref{recrel}). First of all, the functions $S_j: \R^{\Z}\to \R$ in (\ref{recrel}) are defined for all $j\in \Z$ and assign a real value to every sequence. We think of $S_j(x)$ as the ``local energy'' of the configuration $x$ at lattice site $j$ and hence the $S_j$ will be called {\it local potentials}. By $\p_iS_j:=\frac{\partial S_j}{\partial x_i}$ we denote the partial derivative of $S_j$ with respect to the $i$-th coordinate of $x$. \\
\indent In order for (\ref{recrel}) to be well-defined and have interesting solutions, we shall require that the $S_j$ satisfy conditions {\bf A}-{\bf E} below.  
\begin{itemize}
\item[{\bf A.}] Each function $S_j$ is of finite range. This means that there is an integer $r$ so that $S_j(x)$ depends only on the values of 
$$x_{j-r}, x_{j-r+1}, \ldots, x_{j+r-1}\ \mbox{and}\ x_{j+r}\ .$$ 
A formal way of expressing this, is that there exist functions $s_j: \R^{\{j-r, \ldots, j+r\}}\to \R$ such that $S_j(x)=s_j(x|_{\{j-r,\ldots, j+r\}})$. We require that each $S_j$ is twice continuously differentiable. In particular, condition {\bf A} guarantees that each sum in (\ref{recrel}) is finite.
\item[{\bf B.}] The $S_j$ are invariant under the $\Z\times\Z-$action of shifting sequences over integers. This action is determined by {\it shift maps} $\tau_{k,l}: \R^{\Z}\to \R^{\Z}$, for integers $k$ and $l$, defined by
\begin{align}\label{definitiontau}
(\tau_{k,l}x)_i := x_{i-k}+l\ .
\end{align}
The word ``shift-map'' refers to the fact that the graph of $\tau_{k,l}x$, viewed as a subset of $\mathbb{Z}\times \mathbb{R}$, is obtained by shifting the graph of $x$ over the integer vector $(k,l)$. 
The required invariance property now is that 
\begin{align}\label{tauinvariance}
S_j(x)=S_{j+k}(\tau_{k,l}x)\ \mbox{for all}\ j, k, l\in \Z\ .
\end{align}
This shift-invariance expresses the spatial homogeneity of the local potentials. In particular, once one of the $S_j$ is given, for instance $S_0$, then all the others are determined by (\ref{tauinvariance}).  
 \item[{\bf C.}] The functions $S_j$ are coercive, i.e. they grow at infinity. More precisely, 
 $$\lim_{|x_k-x_j|\to \infty}S_j(x)=\infty\ \mbox{if}\ |k-j|=1\ .$$
This condition expresses that every function $x\mapsto S_j(x)$ is as coercive as it can be under the periodicity condition $S_j(\tau_{0,1}x)=S_j(x)$. 
\item[{\bf D.}] The $S_j$ are {\it monotone} in the sense that their mixed derivatives have a sign:
$$\partial_{i,k}S_j  \leq 0 \ \mbox{for all} \ j \ \mbox{and all} \ i\neq k, \mbox{while} \ \partial_{j,k}S_j  < 0 \ \mbox{for all} \ |j-k|=1 \ .$$
Condition {\bf D} is also called the {\it twist condition} or {\it ferromagnetic condition}. It implies that (\ref{recrel}) is a monotone recurrence relation, in the sense that the derivative of the left hand side of (\ref{recrel}) with respect to any of the $x_k$ with $k\neq i$, is non-positive. 
 \item[{\bf E.}] For technical reasons alone, we require that the $S_j$ have bounded derivatives, i.e. there is a constant $C>0$ so that
$$|\partial_iS_j(x)| \leq C\ \mbox{for all}\ i,j\ \mbox{and} \ |\partial_{i, k}S_j(x)| \leq C\ \mbox{for all} \ i, j, k \ \mbox{and uniformly in} \ x. $$
\end{itemize}

\subsection{Examples}\label{examplessubsection}
The type of recurrence relation (\ref{recrel}) is found in many applications, most notably in the theory of Hamiltonian twist maps of the cylinder and in the study of equilibrium states of ferromagnetic crystals. In this section, we will briefly explain these well-known facts. \\
\indent Hamiltonian twist maps of the cylinder are used to describe convex billiards \cite{Tabachnikov} and they also arise generically as Poincar\'e maps of two degree of freedom Hamiltonian systems near elliptic equilibria \cite{MatherForni}. We would like to remind the reader that a
{\it Hamiltonian map} of the cylinder is a map 
$$T: \R/\Z \times \R \to \R/\Z \times \R $$ 
for which there exists a time-periodic real-valued Hamiltonian function 
$$H = H(x \!\!\!\! \mod \Z, y, t \!\!\!\! \mod \Z) \ \mbox{on} \ \R/\Z \times \R \times \R/\Z$$ 
so that $T$ equals the time-$1$ flow or ``Poincar\'e map'' of the non-autonomous canonical Hamiltonian differential equations 
$$\frac{dx}{dt} = \frac{\p H(x, y, t)}{\p y}, \ \frac{dy}{dt} = - \frac{\p H(x, y, t)}{\p x}\ .$$
Hamiltonian maps are sometimes also called ``exact symplectic maps''.\\
\indent Putting it loosely, $T$ is called a {\it twist map} if it ``twists'' the cylinder $\R/\Z \times \Z$ so much that  $T$ is globally equivalent to a recurrence relation. Or, putting it more precisely, if the map $(x,y)\mapsto (x, X)=(x, T_1(x,y))$ is a global orientation preserving diffeomorphism of $\R^2$. \\
\indent One can show, see \cite{gole01} or \cite{MramorRink1}, that a Hamiltonian twist map admits a so-called {\it generating function} $S = S(x,X)$ from $\R\times \R$ into $\R$ with the property that
a sequence 
$$\{\ldots, (x_{-1} \!\!\!\! \mod \Z, y_{-1}), (x_0  \!\!\!\! \mod \Z, y_0), (x_1  \!\!\!\! \mod \Z, y_1), \ldots \} \subset (\R/\Z \times \R)^{\Z}$$
of points in the cylinder, is an orbit of $T$ if and only if 
\begin{align}\label{twistmaprecurrence}
\p_X S(x_{i-1}, x_i) + \p_xS(x_i, x_{i+1})=0\ \mbox{and} \ y_i = -\p_xS(x_{i}, x_{i+1})\ \mbox{for all} \ i\in \Z\ .
\end{align}
The first equality in (\ref{twistmaprecurrence}) defines a recurrence relation in the $x_i$. It is precisely of the form (\ref{recrel}) if one sets $S_j(x):=\frac{1}{2}S(x_{j-1}, x_j) + \frac{1}{2}S(x_j, x_{j+1})$. It turns out that these $S_j$ satisfy requirements {\bf A}-{\bf E}. This explains how (\ref{recrel}) occurs in the theory of Hamiltonian twist maps.
\\
\indent A famous  example of a Hamiltonian twist map is Chirikov's standard map, given by 
$$T_V(x\!\!\!\! \mod \! \Z, y) := \left(x+y + V'(x) \!\!\!\! \mod \! \Z, y + V'(x) \right) ,$$
where $V:\R\to \R$ is some smooth function that satisfies $V(\xi+1)=V(\xi)$ for all $\xi\in \R$. One can compute that Chirikov's standard map is equivalent to $y_i=x_{i+1}-x_i-V'(x_i)$ for all $i\in \Z$, together with the recurrence relation
\begin{align}\label{FKrecrel}
x_{i+1} - 2x_i + x_{i-1} - V'(x_i)=0\ \mbox{for all}  \ i \in \Z \ .
\end{align}
Indeed, (\ref{FKrecrel}) is of the form (\ref{recrel}) for
\begin{align}\label{FKpotentials}
S_j(x) = \frac{1}{4}(x_{j}-x_{j-1})^2 + \frac{1}{4}(x_{j+1}-x_j)^2 + V(x_j)\ .
\end{align}
Interestingly enough, the recurrence relation (\ref{FKrecrel}) does not only describe the orbits of Chirikov's standard map, but also the stationary solutions of the {\it Frenkel-Kontorova lattice} mechanical system or {\it Frenkel-Kontorova crystal} model
\begin{align}\label{FKdynamic}
m_i\frac{d^2x_i}{dt^2} = x_{i+1}  - 2x_i + x_{i-1} -V'(x_i)\ \mbox{for all}  \ i \in \Z \ . 
\end{align}
The Newtonian equations of motion (\ref{FKdynamic}) model an infinite one-dimensional array of particles that attract their nearest neighbors and moreover feel the influence of a periodic background potential $V=V(\xi)$. In the special case that $V(\xi)=\frac{1}{2\pi}\sin (2\pi\xi)$, equations (\ref{FKdynamic}) describe the famous {\it sine-Gordon lattice} of coupled pendula. The variable $2\pi x_j$ then has the interpretation of the angle of the $j$-th pendulum. \\
\indent Thus, we observe that the equilibrium states of certain lattice mechanical systems or ferromagnetic crystals can be characterized as the solutions to an equation of the form (\ref{recrel}). Of course, not in all of these lattice systems do the particles interact only with their nearest neighbors. This is the case, for instance, for the ``next-nearest-neighbor'' lattice system
\begin{align}\label{FKnextnearest}
m_i \frac{d^2x_i}{dt^2} = x_{i+2} + x_{i+1}  - 4x_i + x_{i-1} +x_{i-2} -V'(x_i) \ \mbox{for all}  \ i \in \Z \ .
\end{align}
The stationary solutions of these equations are given by the higher order recurrence relation (\ref{FKnextstat}), which is indeed of the form (\ref{recrel}) if one chooses
$$S_j(x) = \sum_{|k-j|\leq 2} \frac{1}{4}(x_{k}-x_j)^2+ V(x_j) \ \mbox{for all} \ j\in \Z\ .$$
It is important to note that the recurrence relation (\ref{FKnextstat}) is not equivalent to a twist map of the cylinder. On the contrary, the equilibrium points of (\ref{FKnextnearest}) are the orbits of the $4$-dimensional map $(x_{i-2}, x_{i-1}, x_i, x_{i+1})\mapsto (x_{i-1}, x_i, x_{i+1}, x_{i+2})$, where $x_{i+2}:=V'(x_i)-x_{i-2}-x_{i-1}+4x_i-x_{i+1}$.\\
\indent We would like to stress that the results in this paper, and in particular the destruction results announced before, are also valid for such higher order recurrence relations. They therefore form a genuine generalization of certain destruction results for twist maps obtained in \cite{Matherdestruction}. In particular, our proof is different.
\\
\indent 
Before we can describe these destruction results, we need to review some classical facts from Aubry-Mather theory. In order not to overload the reader at this point, we will discuss some more classical theory later, in Section \ref{moreclassicalstuff}. For a much more complete overview of Aubry-Mather theory, we refer to \cite{MatherForni} or \cite{MramorRink1}.

\subsection{A variational principle}
One can remark that the expression at the left hand side of (\ref{recrel}) can be thought of as $\p_iW(x)=\frac{\p W(x)}{\p x_i}$,
where $W(x)$ is the formal (and generally divergent) sum 
$$W(x)=\sum_{j\in \Z} S_j(x)\ .$$
Thus, the solutions to (\ref{recrel}) can be viewed as the formal stationary points of $W(x)$. This inspires us to introduce a special type of solutions to (\ref{recrel}), which can be thought of as the formal minimizers of $W(x)$:
\begin{definition}\label{globalmin}
A sequence $x:\mathbb{Z}\to\mathbb{R}$ is called a {\it minimizer} or {\it global minimizer} or {\it ground state} for the potentials $S_j$ if for every $y:\mathbb{Z}\to\mathbb{R}$ with finite support,
\begin{align}\label{minimizingproperty}
W(x+y)-W(x):=\sum_{j\in\Z} \left( S_j(x+y)-S_j(x) \right) \geq 0 \ .
\end{align}
\end{definition}
Note that by virtue of condition {\bf A} and the assumption that $y$ has finite support, the sum in (\ref{minimizingproperty}) is finite.
It is clear that every minimizer solves (\ref{recrel}). Indeed, applying inequality (\ref{minimizingproperty}) for the $y$ defined by $y_i:=\varepsilon$ and $y_j:=0$ for all $j\neq i$, and subsequently differentiating with respect to $\varepsilon$, one recovers (\ref{recrel}). 
 
 \subsection{The set of minimizers}\label{introsectiononminimizers}
A special type of global minimizers is easy to find, as we will see in Section \ref{classicalresults}. These are the {\it periodic minimizers} that live in one of the finite-dimensional spaces
\begin{align}\label{Xpqdef}
\X_{p,q}:=\{x:\Z\to \R\ | \ \tau_{p,q}x=x\} \ \mbox{with}\ p \geq 1\ \mbox{and} \ q \ \mbox{integers}.
\end{align}
A first central result of Aubry-Mather theory is a consequence of ``Aubry's lemma'' and it concerns these periodic minimizers:
\begin{center}
{\it For every $(p,q)\in \N\times \Z$, the collection of periodic minimizers in $\X_{p,q}$ is nonempty, closed, strictly ordered and shift-invariant.}
\end{center}
We remark that ``closed'' means that when $x^{p,q}_1, x^{p,q}_2, \ldots \in \X_{p,q}$ are periodic minimizers and the pointwise limit $x^{p,q}_{\infty}=\lim_{n\to\infty}x^{p,q}_n$ exists, then also $x^{p,q}_{\infty}\in \X_{p,q}$ is a periodic minimizer. Secondly, one calls two different sequences $x, y:\Z\to\R$ ``strictly ordered'' if either $x_i<y_i$ for all $i\in \Z$ or $x_i>y_i$ for all $i\in \Z$. Thirdly, shift-invariance means that whenever $x$ is a periodic minimizer, then so are its shifts $\tau_{k,l}x$ for all integers $k,l$, see the definition in (\ref{definitiontau}).\\
\indent To distinguish periodic minimizers from non-periodic ones, one introduces the rotation number: 
\begin{definition}\label{rotnumberdef}
Let $x:\mathbb{Z}\to\mathbb{R}$ be a sequence. We say that $x$ has {\it rotation number} $\omega \in \R$ if the limit
$$\lim_{n\to \pm \infty} \frac{x_{n}}{n} \ \mbox{exists and is equal to}\  \omega\ .$$
\end{definition}
It is clear that when $x\in \X_{p,q}$, then its rotation number is $\omega=\frac{q}{p}$. But minimizers of irrational rotation number also exist. This is another principal result in Aubry-Mather theory, the proof of which can be found in \cite{bangert87}:
 \begin{center}
 {\it For every $\omega\in\R \backslash \Q$ there exists a unique nonempty, closed, strictly ordered, shift-invariant and minimal collection $\mathcal{M}^{\omega}\subset \R^{\Z}$ of minimizers of (\ref{recrel}) with rotation number $\omega$.}
 \end{center}
In the above, ``minimality'' means that $\mathcal{M}^{\omega}$ does not contain a nonempty proper subset that is also closed and shift-invariant.\\
\indent The collection $\mathcal{M}^{\omega}$ is called the {\it Aubry-Mather set} of rotation number $\omega$ and its elements are constructed as pointwise limits of periodic minimizers. This will be explained in some detail in Section \ref{classicalresults}. \\
\indent It is well-known that any nonempty, closed, strictly ordered, shift-invariant and minimal subset of $\R^{\Z}$ is either topologically connected or a Cantor set. Hence the third main result of Aubry-Mather theory:
\begin{center}
 {\it For $\omega\in\R \backslash \Q$, the Aubry-Mather set $\mathcal{M}^{\omega}$ is either topologically connected or a Cantor set.}
 \end{center}
\indent Moser \cite{moser86}, \cite{moser89} called a topologically connected, strictly ordered, shift-invariant family of solutions to (\ref{recrel}) a {\it minimal foliation} - he proved that such a family of solutions must consist of only minimizers. Thus, a connected Aubry-Mather set is an example of a minimal foliation. A Cantor Aubry-Mather set is then called a {\it minimal lamination}. Such an Aubry-Mather set contains infinitely many ``gaps''. \\
\indent It is important to distinguish foliations from laminations. For instance, in the context of Hamiltonian twist maps and for $\omega\in\R\backslash \Q$, the $T$-invariant Lipschitz graph
$$\Gamma^{\omega} = \{ (x_i \!\!\!\!\! \mod \Z, -\p_1S(x_{i}, x_{i+1}) \ | \ x\in \mathcal{M}^{\omega}\} \subset\R/\Z \times \R$$
is an invariant quasi-periodic circle when $\mathcal{M}^{\omega}$ is topologically connected, while it is a so-called ``cantorus'' or ``remnant circle'' when $\mathcal{M}^{\omega}$ is a Cantor set. In turn, the existence of invariant circles is decisive for the occurrence of Arnol'd diffusion in the dynamics of the twist map. \\
\indent In the context of a crystal model, a minimal foliation corresponds to a continuous family of ground states of ``rotation number'' $\omega$ that can be deformed into each other by ``sliding'' the particles. Here, the rotation number has the interpretation of the average lattice spacing of the particles. A Cantor Aubry-Mather set corresponds to the existence of ``forbidden regions'' for the ground states of the crystal.

\subsection{The results in this paper}
 \noindent Periodic minimizers may come in continuous families, but typically they are isolated. It is therefore natural to ask, for a given $\omega\in\R \backslash \Q$, whether (\ref{recrel}) typically possesses a minimal foliation or rather a minimal lamination of rotation number $\omega$. The main result of this paper is that if $\omega$ is easy to approximate by rational numbers, then
 $\mathcal{M}^{\omega}$ is likely to be a minimal lamination.\\
\indent In order to quantify what it means that an irrational number is easy to approximate by rational numbers, we found it convenient to define, for constants $\gamma>0$ and $\sigma > 2$, the sets
$$\mathcal{L}_{\gamma, \sigma} := \left\{ \omega\in \R\backslash \Q\ \left| \ \exists\ \mbox{sequence} \ (p_j, q_j)\in \N\times \Z\ \mbox{with} \left| \omega - \frac{q_j}{p_j} \right| < \frac{\gamma}{p_j^{\sigma}} \right. \ \mbox{and}\ \lim_{j\to \infty} p_j=\infty \right\} .$$
Every $\mathcal{L}_{\gamma, \sigma}$ has zero Lebesque measure, but is at the same time uncountable: each $\mathcal{L}_{\gamma, \sigma}$ contains the set of Liouville numbers. \\
 \indent The first main result of this paper is that when $\omega$ is easy to approximate by rational numbers and $\mathcal{M}^{\omega}$ accidentally happens to be a minimal foliation, then this foliation can be destroyed into a lamination by an arbitrarily small smooth perturbation of the local potentials. Additionally, there are no well-ordered minimizers outside this lamination:
 \begin{theorem}\label{maintheorem2}
Let $k\in \N_{\geq 2}$ be a differentiability degree, $\gamma>0$ and $\sigma>1+2k(k+1)$ real numbers  and $\omega\in \mathcal{L}_{\gamma, \sigma}$ a rotation number. \\
\indent Then there exists a $C^k$-dense collection of local potentials that satisfy conditions {\bf A}-{\bf E} of Section \ref{problemsetup} and for which the Aubry-Mather set $\mathcal{M}^{\omega}$ is a Cantor set. These local potentials moreover do not admit minimizers in the gaps of $\mathcal{M}^{\omega}$. 
\end{theorem}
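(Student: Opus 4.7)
The plan is to destroy any existing minimal foliation at rotation number $\omega$ by a small $C^k$ perturbation that first creates a visible gap in the set of periodic minimizers of a very good rational approximant $q_j/p_j$ of $\omega$, and then to argue that the gap survives the limiting construction by which $\mathcal{M}^{\omega}$ is obtained from periodic minimizers. I would begin by reducing to the case in which the unperturbed potentials already admit a minimal foliation of rotation $\omega$, since otherwise the zero perturbation already satisfies the conclusion.

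Assume then that a minimal foliation of rotation $\omega$ is present. For $(p_j,q_j)\in \N\times \Z$ realizing the Liouville approximation $|\omega-q_j/p_j|<\gamma/p_j^{\sigma}$, I would pick one periodic minimizer $x^{(j)}\in\X_{p_j,q_j}$ lying inside the foliation and construct a bump perturbation $\varepsilon_j \phi_j$ of $S_0$, extended to all $S_i$ using condition \textbf{B}. The bumps would sit in narrow neighborhoods of the values $x^{(j)}_i \bmod 1$, have width $w_j$ and height $\varepsilon_j$, and be designed to strictly lower the averaged action of $x^{(j)}$ while leaving the action along the remaining leaves of the foliation essentially unchanged. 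This would isolate $x^{(j)}$ as a strict minimizer in its rotation class and open a gap in the $(p_j,q_j)$-periodic minimizers. To export this gap to rotation $\omega$, I would use the fact that $\mathcal{M}^{\omega}$ is a pointwise limit of periodic minimizers of rotation $q_k/p_k \to \omega$, and convert the Liouville bound into pointwise closeness of periodic and irrational minimizers over $\sim p_j$ consecutive lattice sites, so that two well-ordered periodic minimizers of rotation $q_j/p_j$ straddling the manufactured gap pass to the limit and open a gap inside $\mathcal{M}^{\omega}$.

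The engine room of the argument is the balancing of constants. A bump of height $\varepsilon_j$ and width $w_j$ costs of order $\varepsilon_j w_j^{-k}$ in $C^k$-norm; to overcome the convex second variation of the action along the foliation one needs $\varepsilon_j$ at least polynomial in $p_j^{-1}$; and the propagation from rotation $q_j/p_j$ to $\omega$ costs a further factor in $p_j$ depending on $\sigma$. Carefully tracking these estimates is what will yield precisely the threshold $\sigma>1+2k(k+1)$. The main obstacle, and the heart of the proof, is the quantitative persistence step: showing not only that the gap created at rotation $q_j/p_j$ survives in the limit, but that no well-ordered minimizer of rotation $\omega$, periodic or otherwise, can fill it after perturbation. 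This will require a second-variation estimate along the foliation together with the strict monotonicity in condition \textbf{D}. Once this is in hand, conditions \textbf{A}--\textbf{E} are easily seen to be preserved by the small, compactly supported, shift-symmetric perturbation, and $C^k$-density follows by letting $\varepsilon_j$, and hence the full perturbation, be arbitrarily small in the $C^k$-topology.
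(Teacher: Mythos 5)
Your high-level plan (perturb near a good rational approximant, open a gap, propagate it to rotation $\omega$, track constants against the Liouville exponent) correctly captures the flavor of the paper's argument, but the central mechanism is missing and two steps are stated in ways that would not work.

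First, the sign of the perturbation is backwards. You propose bumps sitting \emph{on} the extended orbit of $x^{(j)}$ designed to ``strictly lower'' its action; since the admissible bumps are nonnegative (they must preserve conditions \textbf{A}--\textbf{E} and stay $C^k$-small), the only option is the paper's: a nonnegative bump supported in a \emph{gap} of $\Sigma_{y^{\min}}$, leaving the periodic minimizer's action unchanged but raising the action of anything that enters the gap. Second, and more importantly, your propagation step relies on ``pointwise closeness of periodic and irrational minimizers'' obtained somehow from the Liouville bound. For twist maps this closeness comes from the single-crossing property of minimizers, but the paper is explicit that this property fails for general recurrence relations of the form (\ref{recrel}): equation (\ref{FKlinear}) gives pairs of minimizers that cross infinitely often. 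The paper's replacement is the near-periodicity Theorem \ref{regularitytheorem} — a purely combinatorial, action-free statement that every maximally periodic Birkhoff sequence is $\ell_1$-close to a $(\tilde p,\tilde q)$-periodic one on some window of length $2r$ — proved by a confinement argument plus pigeonhole. Without this (or a substitute), your ``second-variation estimate along the foliation'' does not supply the needed quantitative comparison between the Birkhoff minimizer $x\in\mathcal B_\Omega$ and periodic competitors, and the balancing of constants that is supposed to produce the threshold $\sigma>1+2k(k+1)$ has nothing to balance against. Relatedly, you skip the paper's two-stage perturbation: a first bump at denominator $p$ forces a quantitatively large gap in the extended orbit of periodic minimizers at the finer denominator $p'p$, and it is the second bump in \emph{that} gap, combined with near-periodicity, that excludes Birkhoff minimizers of nearby rotation numbers.

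Finally, your reduction ``if $\mathcal M^\omega$ is already Cantor, take the zero perturbation'' is incorrect: the theorem also asserts the absence of minimizers in the gaps, and a Cantor $\mathcal M^\omega$ can coexist with nonrecurrent minimizers filling its gaps. The paper handles this with a separate perturbation (Theorem \ref{nofoliationtheorem}) whose bump vanishes exactly on the projection $\Sigma^\omega$ of the Aubry--Mather set and is strictly positive off it; you would need an analogous step.
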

Theorem \ref{maintheorem2} is a rather direct consequence of Theorem \ref{maintheorem} below. This theorem is the second main result of this paper. Its proof is much more technical.
\begin{theorem}\label{maintheorem}
Assume that the $S_j$ are local potentials that satisfy conditions {\bf A}-{\bf E} of Section \ref{problemsetup} and let $\varepsilon>0$ be a perturbation parameter, $k\in \N_{\geq 2}$ a differentiability degree, $\gamma>0$ and $\sigma>1+2k(k+1)$ real numbers and $\omega\in \mathcal{L}_{\gamma, \sigma}$ a rotation number.
\\ \indent Then there exist local potentials $S^{\varepsilon}_j$ that satisfy conditions {\bf A}-{\bf E} and the estimate 
$$||S_j^{\varepsilon} - S_j||_{C^k} := \max_{0\leq n\leq k} \ \max_{i_1, \ldots, i_n\in \Z} \ \sup_{x\in \R^{\Z}} \left| \p_{i_1, \ldots, i_n}S_j(x)\right| \leq \varepsilon,$$
as well as a $\delta >0$ and a nonempty interval $(\eta_-, \eta_+)\subset \R$, for which the following is true. \\
\indent When $\Omega\in \R$ is a rotation number satisfying $\left|\omega- \Omega\right|\leq\delta$ and $x\in\R^{\Z}$ is a ``maximally periodic'' global minimizer of rotation number $\Omega$ of the perturbed potentials $S_j^{\varepsilon}$ and has the property that the collection
$$\{\tau_{k,l}x \ | \ k,l\in \Z\}$$ is totally ordered, then 
$$x_0\notin (\eta_-, \eta_+)\ .$$
\end{theorem}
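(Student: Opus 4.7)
My strategy is a ``bump + local variation'' argument: add a small smooth on-site bump $\varphi(x_j)$ to each $S_j$, whose width is tuned to a good rational approximant of $\omega$, and show that any well-ordered minimizer whose rotation number is close to $\omega$ cannot have $x_0$ sitting in the bump's core, because a short localised shift by the bump's width produces a strictly lower-action finite-support variation, contradicting \eqref{minimizingproperty}.

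\textbf{Step 1 (perturbation and setup).} Using $\omega\in\mathcal{L}_{\gamma,\sigma}$, I first select a rational $q/p$ with $|\omega-q/p|<\gamma/p^\sigma$ and $p$ large (in terms of $\varepsilon,k,\sigma$). Then I pick a smooth nonnegative $\mathbb{Z}$-periodic function $\varphi$ supported modulo $\mathbb{Z}$ on a small interval, with width $w$ and height $h$ balanced so that $\|\varphi\|_{C^k}\asymp h/w^k\leq \varepsilon$. Set $S_j^\varepsilon(x):=S_j(x)+\varphi(x_j)$: periodicity of $\varphi$ in $x_j$ preserves the shift invariance \textbf{B}, the on-site nature of $\varphi$ leaves the off-diagonal mixed partials in \textbf{D} untouched, and the pointwise bounds on the derivatives of $\varphi$ preserve \textbf{E}. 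Define $(\eta_-,\eta_+)$ as the slightly shrunken inner subinterval of the bump support on which $\varphi\geq h/2$, and choose $\delta>0$ so small that $|\Omega-\omega|\leq\delta$ forces $|\Omega-q/p|\leq 2\gamma/p^\sigma$.

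\textbf{Step 2 (finite-support competitor and elastic cost).} Given a maximally periodic, well-ordered global minimizer $x$ of $S_j^\varepsilon$ of rotation number $\Omega$ in this range, I assume for contradiction that $x_0\in(\eta_-,\eta_+)$. I then consider the finite-support variation $y_j:=x_j+\delta_0\zeta_j$, where $\delta_0\asymp w$ is just large enough to push $x_0$ out of the bump support, and $\zeta_j$ is a discrete ``tent'' profile with $\zeta_0=1$, supported in $|j|\leq L$, with slope $\pm 1/L$ on its linear ramps. The unperturbed action change is controlled via Taylor expansion at the critical point $x$ using the uniformly bounded, banded Hessian of $W$ coming from conditions \textbf{A} and \textbf{E}:
\begin{equation*}
W(y)-W(x)\leq C_1\,\delta_0^2\sum_j(\zeta_{j+1}-\zeta_j)^2\leq C_2\,\delta_0^2/L.
\end{equation*}

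\textbf{Step 3 (leakage, balance, and main obstacle).} The potential exchange $\sum_j[\varphi(y_j)-\varphi(x_j)]$ contains the negative contribution $\varphi(y_0)-\varphi(x_0)\leq -h/2$ from blocking the site $j=0$, plus a ``leakage'' from the sites $j\neq 0$. Using that $x$ is well-ordered and $|\Omega-q/p|\leq 2\gamma/p^\sigma$, I apply a Denjoy-type discrepancy bound for the phases $\{x_j\bmod 1\}_{|j|\leq L}$ to control the leakage by $C_3\,\varepsilon w^k\bigl(Lw+L|\Omega-q/p|+1\bigr)$; choosing $w$ and $L$ as appropriate powers of $p$, every error term is smaller than $h/4$ precisely when $\sigma>1+2k(k+1)$, yielding $W^\varepsilon(y)<W^\varepsilon(x)$ and the desired contradiction. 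The central difficulty is this leakage estimate, and in particular the additive ``$+1$'' accidental-site term: ruling out a single phase $x_{j_0}\bmod 1$ from sitting near the bump support for some $j_0\neq 0$ requires the full Denjoy-type quantitative equidistribution for well-ordered orbits whose rotation number is strongly approximated by a rational. The peculiar exponent $1+2k(k+1)$ arises exactly from balancing the three competing scales (gain $h$, elastic cost $w^2/L$, leakage); the remaining bookkeeping --- verifying \textbf{A}--\textbf{E} for $S_j^\varepsilon$ and justifying the Taylor expansion on the finite support of $y-x$ --- is routine.
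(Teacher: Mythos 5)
Your Step 2 estimate $W(y)-W(x)\leq C_1\delta_0^2\sum_j(\zeta_{j+1}-\zeta_j)^2\leq C_2\delta_0^2/L$ is not justified under conditions \textbf{A}--\textbf{E}, and this is a genuine gap. Two separate problems arise. First, $x$ is a critical point of $W^\varepsilon$, not of $W$, so the first-order term of $W(y)-W(x)$ does not vanish; you must either expand $W^\varepsilon$ (whose Hessian picks up $\varphi''$, but that is fine since $k\geq 2$ bounds $\|\varphi''\|_\infty\leq\varepsilon$) or carry the first-order cross term from $\varphi'$. Second, and more importantly, conditions \textbf{A} and \textbf{E} only make the Hessian banded (bandwidth $2r+1$) with uniformly bounded entries; they do not force it to be dominated by a Dirichlet form. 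The honest second-order bound is therefore $|D^2W(\xi)[u,u]|\leq C(2r+1)\|u\|_1\|u\|_\infty\sim C(2r+1)\delta_0^2 L$, which \emph{grows} in $L$ rather than decaying like $\delta_0^2/L$. Once the cost scales like $\delta_0^2 L$ there is no window of scales $(w,L)$ in which the elastic cost, the bump gain $h\sim\varepsilon w^k$ and the leakage can all be balanced, so the tent variation cannot close.

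You also flag the ``$+1$'' accidental-site term in Step 3 as the central difficulty; indeed a single phase $x_{j_0}\bmod 1$ landing near the bump can cost up to $h$, the full gain, so the balance is spoiled unless this term is eliminated, not just bounded. Relatedly, you never specify where the bump sits: placing the bump is not a free choice, because after perturbation the minimizers simply rearrange to avoid it, so an unanchored equidistribution heuristic is circular. The paper resolves all three issues at once by a quite different construction. It first anchors a bump in a gap of the extended orbit of a $(p,q)$-periodic minimizer (Theorem~\ref{destructionperiodic}), uses that bump plus a Lipschitz estimate (Proposition~\ref{lipschitzcontinuity}) to force a quantitatively large gap in the extended orbit of a $(p'p,p'q+1)$-periodic minimizer (Theorem~\ref{sepsilon2}), and places a second bump there. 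Then, instead of a tent variation, it compares the candidate minimizer $x$ on a long window either to a genuine periodic minimizer $x^{\rm min}$ (Case~1) or to a shifted copy of $x$ itself (Case~2). Comparing to a shift of $x$ eliminates the accidental-site problem entirely, and the boundary cost is controlled through the near-periodicity Theorem~\ref{regularitytheorem}, which gives an $\ell^1$ estimate on a window of fixed width $2r$ --- so the boundary cost is a constant $\sim C\,r(2r+1)$ independent of the window length, sidestepping the $\delta_0^2 L$ blow-up that your argument cannot avoid. Your proposal captures the correct intuition (rational approximant, bump, finite-support variation, balance of scales giving $\sigma>1+2k(k+1)$), but the tent-plus-discrepancy mechanism is not workable as stated.
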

Theorem \ref{maintheorem} implies in particular that the perturbed potentials $S_j^{\varepsilon}$ do not admit minimal foliations of rotation numbers $\Omega$ close to or equal to $\omega$. The irrational Aubry-Mather sets $\mathcal{M}^{\Omega, \varepsilon}$ of these perturbed potentials are therefore Cantor sets. \\ 
\indent
The concept of ``maximal periodicity'' will be defined in Section \ref{moreclassicalstuff}. For now, it suffices to say that periodic minimizers, as well as sequences of irrational rotation number, always have this property.

\subsection{Discussion}
In the context of Hamiltonian twist maps, destruction results for invariant circles of the kind in Theorems \ref{maintheorem2} and \ref{maintheorem}, as well as theorems that guarantee the total absence of invariant circles, go under the name ``converse KAM theorems'', cf.  \cite{percival}, \cite{MacKayMeiss} and \cite{MackayMeissStark}. This is because the well-known KAM theory provides persistence results for invariant circles of very irrational, e.g. ``Diophantine'', rotation numbers, see \cite{arnoldmathmethods}.
\\ \indent
One could argue that in the context of general recurrence relations of the form (\ref{recrel}) this terminology is not justified: we are not aware of any KAM-type persistence results for minimal foliations of fully general recurrence relations of the form (\ref{recrel}). Exceptions are   persistence theorems for minimal foliations of certain elliptic PDEs in \cite{moser88} and \cite{moserbrasil} and for recurrence relations that are close to a Hamiltonian twist map in \cite{llavecalleja}. Theorem \ref{maintheorem2}  only provides conditions under which a KAM theorem for (\ref{recrel}) can certainly not be true.
\\ \indent
The destruction results we know of are also only valid for Hamiltonian twist maps. One of these results for twist maps is very similar to Theorem \ref{maintheorem2} and was obtained by Mather \cite{Matherdestruction}. Like our proof of Theorem \ref{maintheorem2}, the proof in \cite{Matherdestruction} is variational. It relies on an earlier result obtained in \cite{Matherpeierls} that establishes a modulus of continuity for the so-called Peierls barrier function. We do not make use of or even define the Peierls barrier function in this paper, but we imagine that with our techniques one could also prove its continuity. \\
\indent We do in fact follow some of the ideas in \cite{Matherdestruction}, but there is an important point at which we deviate from Mather's ideas. This is necessary, because there is a crucial difference between recurrence relations that stem from a Hamiltonian twist map and general recurrence relations of the form (\ref{recrel}). \\
\indent In fact, when (\ref{recrel}) describes the orbits of a Hamiltonian twist map, then one can prove that two different minimizers can cross at most once, see \cite{AubryDaeron}. The proof of continuity of the Peierls barrier function in \cite{Matherpeierls} heavily depends on this single-crossing property, but for general recurrence relations of the form (\ref{recrel}) this property is not true. To illustrate this, one can consider the solutions of (\ref{FKnextstat}) with $V\equiv 0$, i.e. the solutions of 
$$x_{i+2} + x_{i+1}  - 4x_i + x_{i-1} +x_{i-2} =0 \ \mbox{for all}  \ i \in \Z \ .$$
The general solution to this relation can easily be found by an Ansatz $x_i=\alpha^{i}$ and it reads
\begin{align}\label{FKlinear}
x_i = c_0 + c_1\cdot i + c_2\left( -\frac{3}{2}+\frac{1}{2}\sqrt{5} \right)^i + c_3\left( -\frac{3}{2}-\frac{1}{2}\sqrt{5} \right)^i \ \mbox{with} \ c_0, c_1, c_2, c_3 \in \R \ .
\end{align}
There obviously are pairs of solutions that cross infinitely often. At the same time, all solutions given in (\ref{FKlinear}) are global minimizers. This is true because one can check that for all $x\in\R^{\Z}$ and for all finite subsets $B\subset \Z$, the map $y\mapsto W(x+y)-W(x)$ on the space of sequences supported on $B$, is convex. \\ 
\indent In our proof, the single-crossing property for minimizers is therefore replaced by a property that holds more generally. This property will be formulated and proved in Theorem \ref{regularitytheorem} and it consists of a ``near-periodicity'' result for maximally periodic Birkhoff sequences. Theorem \ref{regularitytheorem} makes that our proof and that in \cite{Matherdestruction} have a quite different character. We would say that our proof is a bit simpler. We are moreover confident that our proof allows for a generalization to variational problems on lattices, such as those discussed in \cite{llave-valdinoci}, \cite{llave-lattices} or \cite{MramorRink1}. \\
\indent
Although we prove in this paper that the set of local potentials without a minimal foliation of certain rotation numbers is dense, we do not show that this set is open, residual or in any other sense generic in the $C^k$-topology. This remains an interesting open question.
\\
\indent Moreover, it is not clear to us at this point whether Theorem \ref{maintheorem2} is optimal. That is, whether a minimal foliation of rotation number $\omega$ will always persist under arbitrarily small $C^k$-perturbations as soon as $\omega\notin \bigcup_{\gamma>0} \mathcal{L}_{\gamma, \sigma}$ and $\sigma \leq 1+2k(k+1)$. This would be the content of a KAM theorem. \\
\indent In this regard, it is also interesting to recall the famous result of Brjuno \cite{brjuno} and Yoccoz \cite{yoccoz} concerning Siegel's problem. This result says that every holomorphic map of the form
$$z\mapsto e^{2\pi i \alpha} z + \mbox{nonlinearity}$$
on an open neighborhood of $0$ in $\C$ can be linearized locally near $0$, if and only if $\alpha$ is a  Brjuno number, that is unless $\alpha$ admits extremely good rational approximations. Analogously, one may expect that every minimal foliation for (\ref{recrel}) can be destroyed by an arbitrarily small holomorphic perturbation of the local potentials if and only if the rotation number of that foliation is not a Brjuno number. It would be interesting to investigate if this is true. We refer to \cite{Fornianalytic} for a partial result in this direction for holomorphic twist maps.

\subsection{Outline of this paper}
In Section \ref{moreclassicalstuff} we outline some more classical Aubry-Mather theory that is necessary for the understanding of this paper. Section \ref{moreperiodicsection} contains our near-periodicity Theorem \ref{regularitytheorem}. Then, in Section \ref{periodicdestructionsection}, we discuss how foliations of periodic minimizers can be destroyed. Although the results in this section are rather obvious, they include some quantitative estimates that are important for later. Sections \ref{firststepssection}, \ref{periodicpersistencesection} and \ref{proofsection} are dedicated to the proof of Theorem \ref{maintheorem}, where in Section \ref{firststepssection} we still follow \cite{Matherdestruction} quite closely. Finally, in Section \ref{destructionproofsection}, we prove Theorem \ref{maintheorem2}.

\subsection{Acknowledgement} We would like to acknowledge Henk Broer for several useful comments that helped us formulate our results more clearly and we want to thank our colleagues of the Department of Mathematics at VU University Amsterdam for their continuous support. This research was partially funded by the Dutch Science Foundation NWO.

\section{Classical Aubry-Mather theory}\label{moreclassicalstuff}
In this Section, we summarize some standard concepts and constructions from classical Aubry-Mather theory that were not discussed in the introduction, but that are essential for the understanding of this paper. For the proofs of our statements we refer to \cite{MramorRink1}. In one form or another, much of this section can also be found in the standard references \cite{AubryDaeron}, \cite{gole01}, \cite{MatherTopology} and \cite{MatherForni}.

\subsection{The Birkhoff property}
One can define a partial ordering on the space of sequences as follows:
\begin{definition} 
For $x, y \in \R^{\Z}$ we write
\begin{itemize}
\item $x \leq y$ if $x_i \leq y_i$ for every $i\in \Z$. 
\item $x<y$ if $x\leq y$, but $x \neq y$. We then say that $x$ and $y$ are {\it weakly ordered}.
\item $x \ll y$ if $x_i < y_i$ for every $i\in \Z$. We say that $x$ and $y$ are {\it strictly ordered}.
\end{itemize}
Similarly for $\geq$, $>$ and $\gg$. 
\end{definition}
\noindent Recall the definition of the shift operators $\tau_{k,l}:\mathbb{R}^{\mathbb{Z}}\to \mathbb{R}^{\mathbb{Z}}$ given in (\ref{definitiontau}). The partial orderings defined above, allow us to make the following definition.
\begin{definition}\label{defbirkhoff}
A sequence $x \in \R^{\Z}$ is called a {\it Birkhoff sequence} or a {\it well-ordered sequence}, if the collection 
$$\{ \tau_{k,l}x\ | \ k,l\in \Z\}$$
is totally ordered. In other words, if
for all $k, l \in \Z$, either $\tau_{k,l}x \geq x$ or $\tau_{k,l}x \leq x$. \\
\indent The collection of Birkhoff sequences will be denoted 
$$\mathcal{B}\subset \R^{\Z}$$
and it inherits the topology of pointwise convergence. 
\end{definition}
\noindent Definition \ref{defbirkhoff} says that the graph of a Birkhoff sequence $x$ does not cross any of its integer translates.
\begin{example}
When $h:\R/\Z\to\R/\Z$ is an orientation preserving circle homeomorphism, then it admits a {\it lift} to a strictly increasing continuous map $H:\R\to\R$ that satisfies $H(\xi+1)=H(\xi)+1$ and $H(\xi)\!\!\! \mod \! 1 = h(\xi \!\!\! \mod \! 1)$. \\
\indent If we now denote by $x(\xi):\Z\to\R$ the $H$-orbit of $\xi\in \R$, defined by $x(\xi)_i:=H^i(\xi)$, then $x(\xi_1)\gg x(\xi_2)$ if and only if $\xi_1>\xi_2$. In other words, the collection $\{x(\xi)\ | \ \xi\in \R\}\subset \R^{\Z}$ of orbits of $H$, is strictly ordered. In particular, every $x(\xi)$ is a Birkhoff sequence.
\end{example}
A famous theorem of Poincar\'e says that every circle homeomorphism has a rotation number. In fact, this result only depends on the Birkhoff property of the orbits. Hence, Poincar\'e's theorem can also be put in the following general form:
\begin{proposition}\label{omegaproposition}
Let $\Gamma\subset \R^{\Z}$ be a totally ordered and shift-invariant collection of sequences. Then every $x\in \Gamma$ has a rotation number, say $\omega$, and this rotation number is the same for every element of $\Gamma$. More precisely, it holds for every $x\in \Gamma$ that
\begin{align}\label{poincareestimate}
|x_i - x_0-\omega\cdot i| \leq1\ \mbox{for all} \ i\in \Z\ .
\end{align}
\end{proposition}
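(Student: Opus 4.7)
The plan is to associate to each $x \in \Gamma$ integer-valued shift-counts that satisfy Fekete-type subadditivity. For every $k \in \mathbb{Z}$, shift-invariance places all of $\{\tau_{k,l}x : l \in \mathbb{Z}\}$ in $\Gamma$, so total ordering makes each $\tau_{k,l}x$ comparable to $x$. Since $(\tau_{k,l}x)_i = x_{i-k}+l$, the relation $\tau_{k,l}x \geq x$ reads $x_{j+k}-x_j \leq l$ for all $j$, and $\tau_{k,l}x \leq x$ reads $x_{j+k}-x_j \geq l$ for all $j$. I would define
\[
n(k) := \min\{l \in \mathbb{Z} : \tau_{k,l}x \geq x\}, \qquad m(k) := \max\{l \in \mathbb{Z} : \tau_{k,l}x \leq x\}.
\]
Both exist: for $l$ very large (respectively, very negative) the inequality $\tau_{k,l}x \leq x$ (respectively, $\geq$) already fails at $j=0$, so by comparability the other inequality holds. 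The set $\{l : \tau_{k,l}x \leq x\}$ is a lower set $\{l \leq m(k)\}$ and $\{l : \tau_{k,l}x \geq x\}$ is an upper set $\{l \geq n(k)\}$; since comparability forces their union to cover $\mathbb{Z}$, one obtains the crucial width bound $n(k) - m(k) \leq 1$.

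Unfolding the definitions next yields the pointwise sandwich $m(k) \leq x_{j+k} - x_j \leq n(k)$ for every $j$, and in particular $m(k) \leq x_k - x_0 \leq n(k)$. Telescoping $x_{j+k_1+k_2} - x_j$ through $x_{j+k_2}$ delivers subadditivity $n(k_1 + k_2) \leq n(k_1) + n(k_2)$ and superadditivity $m(k_1 + k_2) \geq m(k_1) + m(k_2)$, so Fekete's lemma produces $\lim_{k \to \infty} n(k)/k = \inf_{k > 0} n(k)/k =: \omega$, with the width bound forcing $\lim m(k)/k = \omega$ as well. The same argument applied to $k \mapsto n(-k)$ produces the corresponding limits as $k \to -\infty$; combining sub/superadditivity with $n(0) = m(0) = 0$ and the width bound gives the identity $0 \leq n(k) + n(-k) \leq 2$, which forces the two-sided limits to coincide.

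The Poincar\'e estimate (\ref{poincareestimate}) is now immediate. For every $k > 0$ Fekete delivers $m(k) \leq \omega k \leq n(k)$, while the sandwich gives $m(k) \leq x_k - x_0 \leq n(k)$; both points lie in a common integer interval of length at most one, so $|x_k - x_0 - \omega k| \leq 1$. The case $k < 0$ follows symmetrically by writing $k=-k'$ and using the sandwich at $j=-k'$ for step $k'$. Finally, for any $y \in \Gamma$ the shifts $\tau_{0,l}y = y + l$ lie in $\Gamma$, so by comparability with $x$ and the fact that $y_0 + l \to \pm \infty$ one finds integers $l_\pm$ with $y + l_- \leq x \leq y + l_+$, showing that $x_i - y_i$ is uniformly bounded in $i$; hence $(x_i - y_i)/i \to 0$ and $y$ has the same rotation number as $x$.

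The main obstacle is tightening the Poincar\'e constant to $1$. Subadditivity of $n$ alone would only yield $\omega \leq n(k)/k$ and so a one-sided bound $x_k - x_0 \geq \omega k - 1$. The decisive ingredient is the parallel two-sided use of Fekete on the superadditive companion $m(k)$, combined with the integer width bound $n(k) - m(k) \leq 1$, which is precisely where the full strength of the total-ordering hypothesis on $\Gamma$ is used.
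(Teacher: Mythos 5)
Your argument is correct and complete, and it is the standard subadditivity proof for this Poincar\'e-type result (the paper itself defers the proof to \cite{MramorRink1} and the other references cited in Section 2, which use essentially this mechanism). The key points are all handled properly: well-definedness of $n(k)$ and $m(k)$ via coercivity of $l\mapsto(\tau_{k,l}x)_0$, the width bound $0\leq n(k)-m(k)\leq 1$ from total ordering, sub/superadditivity by telescoping, Fekete's lemma delivering $m(k)\leq \omega k\leq n(k)$ alongside the pointwise sandwich $m(k)\leq x_{j+k}-x_j\leq n(k)$, the reflection trick $j=-k'$ for negative indices, and boundedness of $x-y$ from shift-invariance to pin down a common rotation number.
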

Proposition \ref{omegaproposition} implies that we can decompose 
$\mathcal{B}=\bigcup_{\omega\in\R}\mathcal{B}_{\omega}$, where
$$\mathcal{B}_{\omega}:=\{x\in \mathcal{B} \ | \ \mbox{the rotation number of}\ x \ \mbox{equals}\ \omega\}\ . $$ 
Proposition \ref{omegaproposition} has two more or less direct consequences that we state here without proof.
\begin{proposition}\label{limitrotationnumbers}
When $x^n\in \mathcal{B}_{\omega_n}$ is a sequence of Birkhoff sequences so that $\lim_{n\to\infty} x^n=x^{\infty}$ pointwise, then $x^{\infty}\in \mathcal{B}$, the limit $\lim_{n\to\infty}\omega_n=\omega$ exists and the rotation number of $x^{\infty}$ equals $\omega$. In other words, $\mathcal{B}$ is closed and the map $x \mapsto \omega, \ \mathcal{B} \to \mathbb{R}$ is continuous. 
\end{proposition}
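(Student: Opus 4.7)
The plan is to reduce everything to the uniform Poincar\'e-type estimate $|x_i - x_0 - \omega \cdot i| \leq 1$ in Proposition \ref{omegaproposition}, which is the sole analytic input needed. The three conclusions (Birkhoff property of $x^\infty$, convergence of $\omega_n$, identification of the limit) then follow in sequence.

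First I would check that $x^\infty \in \mathcal{B}$. Fix any $(k,l) \in \Z \times \Z$. For each $n$, the Birkhoff property of $x^n$ forces at least one of $\tau_{k,l} x^n \geq x^n$ or $\tau_{k,l} x^n \leq x^n$. Since there are only two alternatives and infinitely many indices, one of them, say $\tau_{k,l} x^n \geq x^n$, holds along a subsequence $n_j \to \infty$. Pointwise convergence preserves weak inequalities, so passing to the limit along this subsequence gives $\tau_{k,l} x^\infty \geq x^\infty$ (or the analogous reverse inequality in the other case). Since $(k,l)$ was arbitrary, the collection $\{\tau_{k,l} x^\infty \mid k,l\in\Z\}$ is totally ordered, i.e.\ $x^\infty \in \mathcal{B}$.

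Next I would show that $\omega_n$ converges. Applying Proposition \ref{omegaproposition} to the totally ordered shift-invariant collection generated by $x^n$ and dividing by $|i|$ yields, for every $i \neq 0$,
$$\left| \omega_n - \frac{x^n_i - x^n_0}{i} \right| \leq \frac{1}{|i|}.$$
Taking $i = 1$ and using that $x^n_0, x^n_1$ converge, we see that $(\omega_n)$ is bounded. If $\omega_* = \lim_j \omega_{n_j}$ is any subsequential limit, then passing $n_j \to \infty$ in the displayed inequality (each side depends on only finitely many coordinates of $x^{n_j}$) gives
$$\left| \omega_* - \frac{x^\infty_i - x^\infty_0}{i} \right| \leq \frac{1}{|i|} \quad \text{for every } i \neq 0.$$
Letting $|i| \to \infty$ forces $\omega_* = \lim_{|i|\to\infty} (x^\infty_i - x^\infty_0)/i$, a value independent of the chosen subsequence. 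Therefore every subsequential limit of $(\omega_n)$ coincides, and by boundedness the full sequence converges to a single $\omega \in \R$.

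Finally, since $x^\infty$ is Birkhoff, Proposition \ref{omegaproposition} applied to $\{\tau_{k,l} x^\infty\}$ yields a rotation number $\omega_\infty$ with $|x^\infty_i - x^\infty_0 - \omega_\infty \cdot i| \leq 1$, so $(x^\infty_i - x^\infty_0)/i \to \omega_\infty$ as $|i|\to\infty$. Comparing with the formula for $\omega$ derived above gives $\omega_\infty = \omega$, which is the desired continuity statement. The only mildly subtle step is the middle one: one must not try to define the rotation number map on $x^\infty$ first and then invoke continuity, but instead pin down the limit of $\omega_n$ by the ``bounded sequence with a unique subsequential limit'' argument, whose engine is the uniform bound $1/|i|$ coming from Proposition \ref{omegaproposition}.
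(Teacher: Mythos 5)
Your proof is correct. The paper states this proposition without proof (calling it a ``more or less direct consequence'' of Proposition \ref{omegaproposition}), and your argument is exactly what that phrasing invites: closedness of $\mathcal{B}$ follows by passing weak inequalities to the pointwise limit along subsequences, while both the convergence of $\omega_n$ and the identification of the limit rotation number reduce to the uniform Poincar\'e estimate $|x_i - x_0 - \omega\cdot i|\leq 1$.
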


\begin{proposition}\label{compactness}
Let $K \subset \R$ be compact and let $\mathcal{B}_K := \bigcup_{\omega \in K} \mathcal{B}_{\omega}$. Furthermore, let us identify every sequence $x$ with its vertical translates $\tau_{0,\Z}x=x+\Z$. Then $\mathcal{B}_K /\Z$ is compact in the topology of pointwise convergence.
\end{proposition}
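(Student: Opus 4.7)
The plan is to reduce this to a standard Tychonoff argument, using Poincaré's estimate (\ref{poincareestimate}) to control the ``height'' of Birkhoff sequences once the ambiguity of vertical integer translation is fixed.

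First I would pick canonical representatives: for any equivalence class $[x] \in \mathcal{B}_K/\Z$, choose the unique $x$ in that class with $x_0 \in [0,1)$. I claim all such representatives lie in a common compact subset of $\R^{\Z}$. Indeed, since $K$ is compact, $M := \sup_{\omega \in K} |\omega| < \infty$. By Proposition \ref{omegaproposition} applied to the orbit $\Gamma = \{\tau_{k,l} x \mid k,l \in \Z\}$, which is totally ordered and shift-invariant for any $x \in \mathcal{B}$, we have $|x_i - x_0 - \omega i| \leq 1$ for every $i \in \Z$. Combined with $x_0 \in [0,1)$ and $|\omega| \leq M$, this gives
\[
|x_i| \leq 2 + M|i| \quad \text{for all } i \in \Z.
\]
So the set of canonical representatives is contained in the product $P := \prod_{i\in\Z}[-2-M|i|,\, 2+M|i|]$, which is compact in the pointwise topology by Tychonoff's theorem.

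Next I would show closedness inside $P$. Suppose $x^n$ is a sequence of canonical representatives converging pointwise to some $x^{\infty} \in P$. Each $x^n$ is Birkhoff with rotation number $\omega_n \in K$, so by Proposition \ref{limitrotationnumbers}, $x^{\infty}$ is also Birkhoff and $\omega_n \to \omega^{\infty}$ for some $\omega^{\infty} \in \R$. Since $K$ is closed, $\omega^{\infty} \in K$, hence $x^{\infty} \in \mathcal{B}_K$. Moreover $x^{\infty}_0 \in [0,1]$ by pointwise convergence. If $x^{\infty}_0 \in [0,1)$ then $x^{\infty}$ is itself a canonical representative; if $x^{\infty}_0 = 1$ then $\tau_{0,-1} x^{\infty}$ represents the same class in $\mathcal{B}_K/\Z$. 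Either way $[x^{\infty}] \in \mathcal{B}_K/\Z$, and $[x^n] \to [x^{\infty}]$ in the quotient topology.

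Combining the two steps, $\mathcal{B}_K/\Z$ is (homeomorphic to a) closed subset of the compact set $P$ (up to the harmless boundary identification $x_0 = 0 \sim x_0 = 1$), hence compact. The main conceptual point is that Poincaré's estimate turns the one-dimensional freedom of vertical translation and the compactness of $K$ into a uniform two-sided linear bound on $|x_i|$; the rest is a straightforward Tychonoff plus closedness argument, and I do not anticipate a real obstacle.
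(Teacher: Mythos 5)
The paper states Proposition~\ref{compactness} without proof (calling it a ``more or less direct consequence'' of Proposition~\ref{omegaproposition}), so there is no official argument to compare against; your proof fills that gap correctly. The two key ingredients --- using Poincar\'e's estimate~(\ref{poincareestimate}) together with compactness of $K$ to get the uniform bound $|x_i|\leq 2+M|i|$ after fixing $x_0\in[0,1)$, and then invoking Tychonoff plus Proposition~\ref{limitrotationnumbers} and closedness of $K$ for the closure step --- are exactly what is needed. One cosmetic simplification: if you take representatives with $x_0\in[0,1]$ (closed interval) rather than $[0,1)$, then the set $R:=\{x\in\mathcal{B}_K : x_0\in[0,1]\}$ is already a closed subset of the compact box $P$, hence compact, and the quotient map $R\to\mathcal{B}_K/\Z$ is continuous and surjective, so the conclusion follows at once from ``continuous image of a compact set is compact'' without the side discussion of the boundary identification $0\sim 1$.
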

 
\noindent We conclude this section with two ``number-theoretic'' results that we will need later. The first one expresses that the rotation number $\omega$ of a Birkhoff sequence $x$ determines almost completely how the collection 
$\{ \tau_{k,l}x\ |\ k,l\in \Z\}$ is ordered. 
{\begin{proposition}\label{numbertheory}
Let $\omega\in \R$ and $x \in \mathcal{B}_\omega$. If $-\omega\cdot k+l>0$, then $\tau_{k,l}x > x$ and if $-\omega \cdot k+l<0$, then $\tau_{k,l}x < x$.
\end{proposition}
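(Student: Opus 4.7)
The plan is to combine the Birkhoff dichotomy $\tau_{k,l}x\ge x$ or $\tau_{k,l}x\le x$ (which holds by Definition \ref{defbirkhoff}) with the quantitative control from Proposition \ref{omegaproposition}. Assume $-\omega k + l > 0$; I will rule out both $\tau_{k,l}x \le x$ and the equality $\tau_{k,l}x=x$, forcing $\tau_{k,l}x > x$. The case $-\omega k+l<0$ will then follow by applying the previous case to $(-k,-l)$, since $\tau_{-k,-l}$ is the inverse of $\tau_{k,l}$, so $\tau_{-k,-l}x > x$ is equivalent to $\tau_{k,l}x < x$.

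Suppose for contradiction that $\tau_{k,l}x \le x$, i.e.\ $x_{i-k} + l \le x_i$ for every $i \in \Z$. Iterating this shift inequality $n$ times yields $x_{i-nk} + nl \le x_i$ for all $n \ge 0$; setting $i = 0$ gives $x_{-nk} \le x_0 - nl$. On the other hand, the Poincar\'e-type estimate (\ref{poincareestimate}), applied at the index $-nk$, gives $x_{-nk} \ge x_0 - \omega n k - 1$. Combining the two bounds produces $n(l - \omega k) \le 1$ for every $n \ge 1$, which contradicts $l - \omega k > 0$ as $n \to \infty$. The degenerate case $k = 0$ needs only the observation that $x_i + l \le x_i$ directly forces $l \le 0$, again contradicting $l>0$.

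To rule out equality, note that $\tau_{k,l}x = x$ means $x$ is exactly $(k,l)$-periodic; iterating gives $x_{-nk} = x_0 - nl$ for every $n$, so dividing by $-nk$ and letting $n \to \infty$ forces $\omega = l/k$ (when $k \neq 0$), hence $l - \omega k = 0$, contradicting the hypothesis. If $k = 0$ then $\tau_{0,l}x = x$ forces $l = 0$, again impossible under $l>0$. The Birkhoff dichotomy therefore delivers $\tau_{k,l}x > x$ strictly.

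The only point that requires any care — and the closest thing to an obstacle — is to make sure the iteration in the contradiction argument is performed in the direction in which the linear-in-$n$ lower bound on $n(l-\omega k)$ genuinely blows up. For $k>0$ one iterates at $i=0$ toward $-\infty$, for $k<0$ one iterates toward $+\infty$ (equivalently, replaces $(k,l)$ by $(-k,-l)$ from the outset); in both instances Proposition \ref{omegaproposition} supplies a linear bound on $x_{\pm nk}$ that controls the iterate, so this is a routine sign check rather than a substantive difficulty. No ingredient beyond the Birkhoff property and (\ref{poincareestimate}) is needed.
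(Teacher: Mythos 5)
Your proof is correct. The paper does not spell out a proof of Proposition \ref{numbertheory} (it defers to \cite{MramorRink1}), but the argument you give — combine the Birkhoff dichotomy with the Poincar\'e estimate (\ref{poincareestimate}) — is exactly the natural route the surrounding text suggests. One small simplification: the sign-of-$k$ case distinction you flag at the end is unnecessary; the chain $x_0 - \omega n k - 1 \le x_{-nk} \le x_0 - nl$ holds for every $n \ge 0$ regardless of the sign of $k$, since (\ref{poincareestimate}) is a two-sided estimate valid at every index, and it already yields $n(l-\omega k) \le 1$ for all $n$, hence the desired contradiction without any further bookkeeping.
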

When $-\omega\cdot k+l=0$ for some nonzero $k,l\in \Z$, then Proposition \ref{numbertheory} does not say how $\tau_{k,l}x$ and $x$ are ordered. This situation can occur when $\omega\in\Q$. To exclude this ambiguity, we make the following definition:
\begin{definition}\label{maxperdef}
A Birkhoff sequence $x\in\mathcal{B}_{\omega}$ is called {\it maximally periodic} if for all $k,l\in \Z$ with $-\omega\cdot k + l = 0$ it holds that $\tau_{k,l}x=x$.
\end{definition}
It is clear that when $\omega \in\R \backslash \Q$, then every $x\in \mathcal{B}_{\omega}$ is automatically maximally periodic. \\
\indent
For the second ``number-theoretic'' result, we recall the definition of the space of $\X_{p,q}$ of periodic sequences given in (\ref{Xpqdef}). We will denote the collection of periodic Birkhoff sequences of periods $(p,q)$ by 
 $$\mathcal{B}_{p,q}:=\mathcal{B}\cap \X_{p,q} \ .$$
Because the elements of $\X_{p,q}$ have rotation number $\omega=\frac{q}{p}$, we have that $\mathcal{B}_{p,q} \subset \mathcal{B}_{q/p}$.  
The final result of this section is therefore a rather straightforward application of Proposition \ref{numbertheory} to the case that $\omega=\frac{q}{p}$.
 \begin{proposition}\label{maxrelper} Periodic Birkhoff sequences are as periodic as they can be. More precisely, when $n\in \N$ and $(p,q)\in \N\times \Z$, then $\mathcal{B}_{np,nq}=\mathcal{B}_{p,q}$. In other words, the periods of a periodic Birkhoff configuration can be chosen relative prime.  
\end{proposition}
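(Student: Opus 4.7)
My plan is to establish the two inclusions separately. The inclusion $\mathcal{B}_{p,q} \subseteq \mathcal{B}_{np,nq}$ is immediate from the composition law $\tau_{k_1,l_1}\circ\tau_{k_2,l_2}=\tau_{k_1+k_2,l_1+l_2}$, which one reads off the definition (\ref{definitiontau}). This gives $\tau_{np,nq}=\tau_{p,q}^{n}$, so any fixed point of $\tau_{p,q}$ is automatically fixed by $\tau_{np,nq}$; the Birkhoff condition, which only depends on the orbit $\{\tau_{k,l}x\}$, is of course unaffected.

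The substance is the reverse inclusion. Given $x\in\mathcal{B}_{np,nq}$, observe that $x$ has rotation number $\omega=nq/(np)=q/p$, so $-\omega\cdot p+q=0$. This is exactly the degenerate case in Proposition \ref{numbertheory}, which is therefore silent about whether $\tau_{p,q}x$ lies above or below $x$; one cannot deduce $\tau_{p,q}x=x$ from that proposition alone. The extra information I want to exploit is precisely the hypothesis $\tau_{p,q}^{n}x=\tau_{np,nq}x=x$.

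The key step is the monotonicity of the shift: from $(\tau_{p,q}y)_{i}=y_{i-p}+q$ one sees that $y\leq z$ implies $\tau_{p,q}y\leq \tau_{p,q}z$, so $\tau_{p,q}$ preserves the partial order on $\R^{\Z}$. Since $x$ is Birkhoff, $\tau_{p,q}x$ and $x$ are comparable. Without loss of generality assume $\tau_{p,q}x\geq x$ (the opposite case being symmetric, or alternatively reduced to this one via $\mathcal{B}_{p,q}=\mathcal{B}_{-p,-q}$). Iterating the order-preserving map $\tau_{p,q}$ then produces the chain
\[
x\;\leq\;\tau_{p,q}x\;\leq\;\tau_{p,q}^{2}x\;\leq\;\cdots\;\leq\;\tau_{p,q}^{n}x\;=\;\tau_{np,nq}x\;=\;x,
\]
which forces every inequality to be an equality, so $\tau_{p,q}x=x$. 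Hence $x\in\X_{p,q}$, and combined with the Birkhoff property $x\in\mathcal{B}_{p,q}$.

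I do not anticipate any genuine obstacle: the whole argument reduces to the elementary remark that an order-preserving self-map whose $n$-th iterate has a given fixed point must fix every weakly comparable predecessor in that iteration. The only subtle point is recognising that Proposition \ref{numbertheory} degenerates in the case $-\omega k + l = 0$, which is exactly the point that Definition \ref{maxperdef} of maximal periodicity is designed to address; here we do not need maximal periodicity as a hypothesis, because the assumed periodicity $\tau_{np,nq}x=x$ together with monotonicity of $\tau_{p,q}$ supplies the missing ingredient.
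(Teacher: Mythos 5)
Your proof is correct. The paper gives no explicit argument for this proposition; it merely remarks that it is ``a rather straightforward application of Proposition \ref{numbertheory} to the case that $\omega=\frac{q}{p}$.'' You are right to flag that this hint, read literally, does not close the argument: for $x\in\mathcal{B}_{np,nq}$ the rotation number is $\omega=q/p$, so for $(k,l)=(p,q)$ one has $-\omega k+l=0$, which is precisely the degenerate case on which Proposition \ref{numbertheory} is silent. What that proposition does yield is that the stabilizer $\{(k,l)\in\Z^2\,:\,\tau_{k,l}x=x\}$ is contained in the line $pl=qk$; it does not by itself force $(p,q)$ into the stabilizer. Your monotonicity argument supplies exactly the missing ingredient: $\tau_{p,q}$ is order-preserving, the Birkhoff property makes $\tau_{p,q}x$ comparable with $x$, and then the telescope $x\le\tau_{p,q}x\le\cdots\le\tau_{p,q}^{n}x=x$ (or its reverse) collapses to equalities, giving $\tau_{p,q}x=x$. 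Together with the trivial inclusion $\mathcal{B}_{p,q}\subseteq\mathcal{B}_{np,nq}$ via $\tau_{np,nq}=\tau_{p,q}^n$, this is a complete, self-contained proof, and it makes explicit a step the paper's hint glosses over.
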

As a consequence of Proposition \ref{maxrelper}, an $x\in \mathcal{B}_{q/p}$ is maximally periodic if and only if it is periodic. We also remark that there do exist Birkhoff sequences of rational rotation number $\frac{q}{p}$ that are not periodic.
 
\subsection{More about minimizers}\label{classicalresults}
Birkhoff sequences are important in the study of recurrence relations of the form (\ref{recrel}), because many of the global minimizers of (\ref{recrel}) have the Birkhoff property. For instance, all periodic minimizers do. We will explain this below.
\\ \indent 
The first thing to remark is that a $(p,q)$-periodic sequence $x\in \X_{p,q}$ is a solution to (\ref{recrel}) if and only if it is a stationary point of the periodic action function
$$W_{p}:\X_{p,q}\to\R\ \mbox{defined by}\ W_p(x):=\sum_{j=1}^p S_j(x)\ .$$
Because $\X_{p,q}$ is finite-dimensional and $W_p(x)$ is a finite sum, these stationary points are well-defined and in particular one calls an $x\in \X_{p,q}$ a {\it periodic minimizer} or {\it $(p,q)$-minimizer} if it minimizes $W_p$ over $\X_{p,q}$. \\
\indent The following proposition summarizes all we need to know about periodic minimizers. For a full proof of this proposition, we refer to \cite{MramorRink1}.
\begin{proposition}
\label{periodicminimizersproposition}
For all $(p,q)\in \N\times \Z$, the collection of $(p,q)$-minimizers is nonempty, closed under pointwise convergence, shift-invariant and strictly ordered. In particular, every $(p,q)$-minimizer has the Birkhoff property. \\
\indent Moreover, $x\in \X_{p,q}$ is a $(p,q)$-minimizer if and only if it is an $(np,nq)$-minimizer for any $n\in \N$, if and only if it is a global minimizer.
\end{proposition}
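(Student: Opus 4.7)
The plan is to establish the four qualitative properties (nonempty, closed, shift-invariant, strictly ordered) first for $(p,q)$-minimizers, and then to derive the three-way equivalence with $(np,nq)$-minimizers and global minimizers as a consequence of strict ordering plus a periodic lifting argument.

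First I would handle existence, closedness and shift invariance together. Parameterize $\X_{p,q}$ by $(x_0,\dots,x_{p-1})\in\R^p$ via $x_{i+p}=x_i+q$. By condition {\bf A} the sum defining $W_p$ is finite and $C^2$. By condition {\bf B} with $k=0$, $W_p$ is invariant under the diagonal $\Z$-action $x\mapsto x+n$, so it descends to a continuous function on $\R^p/\Z$. The constraint $\sum_{i=0}^{p-1}(x_{i+1}-x_i)=q$ is fixed, hence any escape to infinity modulo the diagonal action forces some consecutive difference $|x_{i+1}-x_i|$ to diverge; by coercivity {\bf C} this makes $W_p\to\infty$. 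So $W_p$ is proper on $\R^p/\Z$ and attains its minimum, proving nonemptiness; closedness under pointwise limits is immediate from continuity of $W_p$ on the finite-dimensional space $\X_{p,q}$. Shift-invariance follows from the identity $W_p(\tau_{k,l}x)=\sum_{j=1}^{p}S_{j-k}(x)$ (by {\bf B}), whose right-hand side equals $W_p(x)$ since it is a sum of $p$ consecutive local potentials on a $(p,q)$-periodic sequence.

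Next I would prove strict ordering via Aubry's lemma. Given two $(p,q)$-minimizers $x,y$, set $m=\min(x,y)$ and $M=\max(x,y)$, both in $\X_{p,q}$. The twist condition {\bf D} yields the classical crossing inequality
\[
S_j(x)+S_j(y)\ \geq\ S_j(m)+S_j(M),
\]
with strict inequality whenever $x$ and $y$ actually cross inside the stencil $\{j-r,\dots,j+r\}$. Summing over one period and using $W_p(m),W_p(M)\geq W_p(x)=W_p(y)=\min W_p$ produces a contradiction if the sequences cross, so $(p,q)$-minimizers are weakly ordered. To upgrade to strict ordering, assume $x\leq y$, $x\neq y$, with $x_k=y_k$ at some $k$. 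Along the segment $x+t(y-x)$, $t\in[0,1]$, the Euler--Lagrange functional $f(t)=\sum_j\partial_k S_j(x+t(y-x))$ satisfies $f(0)=f(1)=0$, so by the mean value theorem some $t_*\in(0,1)$ has $f'(t_*)=0$, giving
\[
0=\sum_{i}\sum_{j}\partial_{i,k}S_j(x+t_*(y-x))\,(y_i-x_i).
\]
The term $i=k$ vanishes by hypothesis, every remaining term is $\leq 0$ by {\bf D}, and the two nearest-neighbour terms $i=k\pm 1$ have strictly negative coefficient, forcing $y_{k\pm 1}=x_{k\pm 1}$. Iteration gives $x=y$, contradicting $x\neq y$. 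The Birkhoff property is then immediate, since all shifts $\tau_{k,l}x$ are $(p,q)$-minimizers and therefore strictly comparable to $x$.

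Finally I would prove the equivalences. Since $\X_{p,q}\subset\X_{np,nq}$ and $W_{np}|_{\X_{p,q}}=nW_p$, any $(np,nq)$-minimizer that happens to lie in $\X_{p,q}$ is a $(p,q)$-minimizer. Conversely, if $\tilde y$ is an $(np,nq)$-minimizer, then $\tau_{p,q}\tilde y$ is also one, and strict ordering of $(np,nq)$-minimizers leaves only the options $\tau_{p,q}\tilde y\gg\tilde y$, $\ll\tilde y$ or $=\tilde y$; the first two are excluded by iterating $n$ times and applying $\tau_{np,nq}\tilde y=\tilde y$, so $\tilde y\in\X_{p,q}$ and the two notions of minimizer coincide. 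For global minimality, given a $(p,q)$-minimizer $x$ and a finite-support variation $y$, extend $y$ to a $(Np,Nq)$-periodic $\bar y$ for $N$ so large that the periodic copies of the bump are separated by more than the interaction range $r$; then non-interaction yields $W_{Np}(x+\bar y)-W_{Np}(x)=W(x+y)-W(x)$, and $(Np,Nq)$-minimality of $x$ gives non-negativity. The reverse implication, that a global minimizer $x\in\X_{p,q}$ is a $(p,q)$-minimizer, is obtained by contradiction: if some $(p,q)$-minimizer $\tilde x$ had $W_p(\tilde x)<W_p(x)$, strict ordering makes $\tilde x$ comparable to $x$, and interpolating from $x$ to $\tilde x$ over a window of length $Np$ yields a finite-support competitor whose bulk gain is $\asymp -N(W_p(x)-W_p(\tilde x))$ while boundary corrections are $O(1)$, violating global minimality for large $N$.

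The main obstacle I foresee is the strict-ordering step, specifically the propagation of pointwise equality through neighbours using only the strict bound $\partial_{j,j\pm 1}S_j<0$ while farther second derivatives are merely $\leq 0$; this is where all five structural assumptions {\bf A}--{\bf E} (finite range, shift-invariance, coercivity, strict twist, and smoothness) interact most tightly, and it is the ingredient that distinguishes $(p,q)$-minimizers from arbitrary critical points.
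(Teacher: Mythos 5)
Your proof follows the paper's overall strategy (existence via coercivity on $\X_{p,q}/\Z$, Aubry's lemma via maximum principles, then periodic lifting), but it is substantially more detailed than the paper's sketch, which explicitly omits both a full proof of the strong maximum principle and the entire final statement about $(np,nq)$-minimizers and global minimizers. Your mean-value-theorem derivation of the strong maximum principle is a clean and correct instantiation of what the paper merely calls ``closely inspecting (\ref{recrel})'', and your periodic-extension and truncation arguments for the equivalences are the right ones.

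There is, however, a real gap in your weak-ordering step. You claim the crossing inequality $S_j(x)+S_j(y)\geq S_j(m)+S_j(M)$ is \emph{strict} ``whenever $x$ and $y$ actually cross inside the stencil.'' This is only guaranteed by condition {\bf D} when the crossing occurs between \emph{adjacent} sites, since only $\partial_{j,k}S_j<0$ for $|j-k|=1$ is strict, while $\partial_{i,k}S_j$ is merely $\leq 0$ for $|i-k|\geq 2$. If $x$ and $y$ cross by first touching (say $x_i<y_i$, $x_{i+1}=y_{i+1}$, $x_{i+2}>y_{i+2}$), the relevant second derivative $\partial_{i,i+2}S_{i+1}$ may vanish and you get no strict inequality, so ``cross $\Rightarrow$ contradiction'' does not follow as written. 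The fix is exactly the paper's two-step argument, and you already have all the ingredients: the \emph{weak} crossing inequality alone shows $m=x\wedge y$ and $M=x\vee y$ are again $(p,q)$-minimizers; then your MVT argument applied to the weakly ordered pair $(m,x)$ forces either $m=x$ (hence $x\leq y$) or $m\ll x$ (hence $y\ll x$), and a final application of the MVT argument to a weakly ordered pair upgrades $\leq$ to $\ll$. A second, smaller point: in the reverse implication for global minimality you invoke ``strict ordering makes $\tilde x$ comparable to $x$,'' but $x$ is not assumed to be a $(p,q)$-minimizer there, so the strict ordering does not apply to it; fortunately your interpolation/truncation estimate needs only that $\tilde x-x$ is $p$-periodic and hence bounded, so that remark can simply be dropped.
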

\begin{proof}[{\bf Sketch}] The invariance of the $S_j$ under $\tau_{0,1}$ implies that the function $W_p$ descends to a function on $\X_{p,q}/\Z$. Condition {\bf C} implies that this function is coercive. This guarantees that a $(p,q)$-minimizer exists. \\
\indent The set of $(p,q)$-minimizers is closed because condition {\bf A} implies that $W_p:\X_{p,q}\to\R$ is continuous.
\\
\indent Condition {\bf B} implies that $W_p(\tau_{k,l}x)=W_p(x)$ for all $k,l\in \Z$ and all $x\in \X_{p,q}$. Thus, the collection of $(p,q)$-minimizers is shift-invariant.
\\ \indent 
 The strict ordering deserves some more explanation. This property is sometimes called ``Aubry's lemma''. It follows from two observations that one derives from condition {\bf D}. The first observation is a ``weak maximum principle''. To formulate it, one defines for arbitrary $x, y\in \X_{p,q}$, the sequences $x\wedge y$ and $x \vee y$ in $\X_{p,q}$ by 
$$(x\wedge y)_i:=\min\{x_i, y_i\}\ \mbox{and} \ (x \vee y)_i:=\max\{x_i, y_i\}\ .$$
With the help of the first part of condition {\bf D}, the weak monotonicity condition, one can then compute that
\begin{align}
W_p(x \wedge y)+W_p(x \vee y) \leq W_p(x) + W_p(y) \ .
\end{align}
In particular, when $x$ and $y$ are $(p,q)$-minimizers, then so are $x\wedge y$ and $x \vee y$. This is the weak maximum principle. 
\\ \indent Closely inspecting (\ref{recrel}) and using condition {\bf D} in its strong form, one can moreover show that two solutions of (\ref{recrel}) can not ``touch''. More precisely, when $x<y$ are two nonidentical, not necessarily periodic, weakly ordered solutions to (\ref{recrel}), then actually $x\ll y$. In other words, two weakly ordered solutions to (\ref{recrel}) must automatically be strictly ordered. This is the ``strong maximum principle''.
 \\
\indent Now one argues as follows. Suppose that $x, y\in \X_{p,q}$ are two nonidentical minimizers that are not strictly ordered, i.e. that $x$ and $y$ ``cross'' or ``touch''. By the weak maximum principle, this implies that then $x \wedge y$ and $x$ form a pair of weakly ordered but not strictly ordered minimizers. But by the strong maximum principle this is impossible. We conclude that the set of $(p,q)$-minimizers is strictly ordered.
\\
\indent The final statement of Proposition \ref{periodicminimizersproposition} is related to Proposition \ref{maxrelper}. We omit the proof. It is not completely trivial. 
\end{proof}
Global minimizers of irrational rotation numbers can now be constructed as limits of periodic minimizers. This works thanks to the following proposition.
\begin{proposition}\label{limitminimizers}
When $x^n\in \R^{\Z}$ is a sequence of global minimizers and $\lim_{n\to \infty} x^n=x^{\infty}$ pointwise, then also $x^{\infty}$ is a global minimizer.
\end{proposition}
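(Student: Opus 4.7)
The plan is to exploit the finite-range condition \textbf{A} to reduce the infinite sum defining $W(x^{\infty}+y)-W(x^{\infty})$ to a finite one, and then pass the pointwise limit through this finite sum using continuity of the $S_j$.

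First I would fix an arbitrary test sequence $y:\Z\to\R$ with finite support, say $\mathrm{supp}(y)\subseteq B$ for some finite $B\subset \Z$. By condition \textbf{A}, there is an integer $r$ such that $S_j(z)$ depends only on the coordinates $z_{j-r},\ldots,z_{j+r}$. Consequently $S_j(x+y)-S_j(x)=0$ whenever $\{j-r,\ldots,j+r\}\cap B=\emptyset$. The set
$$J:=\{j\in\Z \ | \ \{j-r,\ldots,j+r\}\cap B\neq\emptyset\}$$
is therefore finite, and for every $x\in\R^{\Z}$ one has the finite identity
$$W(x+y)-W(x)=\sum_{j\in J}\bigl(S_j(x+y)-S_j(x)\bigr).$$

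Second, since each $S_j$ is $C^2$ by condition \textbf{A}, and since $S_j(z)$ depends only on finitely many coordinates of $z$, the assumed pointwise convergence $x^n\to x^{\infty}$ yields $S_j(x^n+y)\to S_j(x^{\infty}+y)$ and $S_j(x^n)\to S_j(x^{\infty})$ for each $j\in J$. Because $J$ is a finite index set, we may interchange the limit and the finite sum:
$$W(x^{\infty}+y)-W(x^{\infty})=\lim_{n\to\infty}\sum_{j\in J}\bigl(S_j(x^n+y)-S_j(x^n)\bigr)=\lim_{n\to\infty}\bigl(W(x^n+y)-W(x^n)\bigr).$$

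Third, each $x^n$ is a global minimizer, so $W(x^n+y)-W(x^n)\geq 0$ for every $n$. Passing to the limit gives $W(x^{\infty}+y)-W(x^{\infty})\geq 0$. Since $y$ with finite support was arbitrary, $x^{\infty}$ satisfies Definition \ref{globalmin} and is therefore a global minimizer.

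No step here is genuinely an obstacle; the proposition is really a continuity-and-finite-range observation. The only point requiring mild care is bookkeeping the finite index set $J$ to justify that the otherwise formal sum $W(x+y)-W(x)$ is literally a finite sum depending continuously on the relevant coordinates of $x$, after which the conclusion is immediate.
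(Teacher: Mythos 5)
Your proof is correct, and it is the standard argument. The paper itself does not spell out a proof of Proposition \ref{limitminimizers} (it refers the reader to \cite{MramorRink1} for the proofs in Section \ref{moreclassicalstuff}), but what you wrote is exactly the natural route: condition \textbf{A} makes $W(x+y)-W(x)$ a genuinely finite sum once $y$ has finite support, each summand is a continuous function of finitely many coordinates of $x$, so the inequality $W(x^n+y)-W(x^n)\geq 0$ passes to the pointwise limit. Nothing is missing.
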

Let us now sketch the well-known procedure for constructing minimizers of arbitrary rotation numbers. 
\\ \indent 
Given $\omega\in\R \backslash \Q$ let us choose a sequence $\frac{q_n}{p_n}$ of rational numbers so that $\lim_{n\to\infty}\frac{q_n}{p_n}=\omega$. Let $x^{p_n, q_n}\in \X_{p_n, q_n}$ be a corresponding sequence of periodic minimizers. We have seen that these exist and have rotation number $\frac{q_n}{p_n}$. Moreover, each of them is Birkhoff, i.e. $x^{p_n, q_n} \in \mathcal{B}_{p_n, q_n}$. By shift-invariance, one may assume that $x^{p_n, q_n}_0\in [0,1]$ and hence by Proposition \ref{compactness}, there then is a subsequence $x^{p_{n_j}, q_{n_j}}$ that limits pointwise to a sequence $x^{\infty}\in \mathcal{B}$. By Proposition \ref{limitminimizers}, this $x^{\infty}$ is a global minimizer, while by Proposition \ref{limitrotationnumbers} it has rotation number $\omega$. We have proved:
\begin{theorem}
For every $\omega\in \R$ there exists a Birkhoff global minimizer of rotation number $\omega$. If $\omega\in \Q$, then this global minimizer can be chosen periodic.
\end{theorem}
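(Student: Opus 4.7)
The proof essentially assembles the machinery already established in this section, so the plan is to execute the two cases (rational and irrational $\omega$) separately, relying on compactness and the fact that the Birkhoff property and the minimizing property are both preserved under pointwise limits.

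For the rational case $\omega = q/p$, my plan is to first reduce to coprime $(p,q)$, then invoke Proposition \ref{periodicminimizersproposition}: this gives existence of a $(p,q)$-minimizer $x \in \X_{p,q}$, which is automatically Birkhoff (by the strict ordering of the $(p,q)$-minimizers, together with shift-invariance of that set, one sees $\{\tau_{k,l}x\}$ is totally ordered), periodic of period $(p,q)$ by construction, and in fact a global minimizer by the final sentence of Proposition \ref{periodicminimizersproposition}. This settles the rational case and simultaneously yields the ``can be chosen periodic'' clause.

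For the irrational case $\omega \in \R \setminus \Q$, I follow the construction sketched just before the theorem statement. First choose any sequence $q_n/p_n \in \Q$ with $q_n/p_n \to \omega$ and $p_n \to \infty$; by Proposition \ref{periodicminimizersproposition}, pick for each $n$ a $(p_n,q_n)$-minimizer $x^{p_n,q_n} \in \X_{p_n,q_n}$, which then lies in $\mathcal{B}_{p_n,q_n} \subset \mathcal{B}_{q_n/p_n}$. Using the $\tau_{0,1}$-invariance of the minimizer set, translate each $x^{p_n,q_n}$ vertically so that $x^{p_n,q_n}_0 \in [0,1)$. Since the rotation numbers $q_n/p_n$ lie in a compact set $K$ containing $\omega$, Proposition \ref{compactness} provides compactness of $\mathcal{B}_K/\Z$ under pointwise convergence, so some subsequence converges pointwise to a sequence $x^\infty \in \mathcal{B}$. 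Proposition \ref{limitrotationnumbers} then forces the rotation number of $x^\infty$ to equal $\lim_j q_{n_j}/p_{n_j} = \omega$, and Proposition \ref{limitminimizers} guarantees that $x^\infty$ is a global minimizer.

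I don't anticipate a genuine obstacle: every nontrivial ingredient (existence of $(p,q)$-minimizers, their Birkhoff property, closedness of global minimizers and of Birkhoff sequences under pointwise limits, continuity of the rotation number, and compactness of $\mathcal{B}_K/\Z$) is recorded as a proposition earlier in the section. The only small care required is in the normalization step: one must shift the periodic approximants into a bounded vertical strip before extracting a limit, since without such a normalization the pointwise limit need not exist. Using $\tau_{0,1}$-shifts to put $x^{p_n,q_n}_0$ into $[0,1)$, combined with the Poincaré estimate (\ref{poincareestimate}) that controls $|x^{p_n,q_n}_i - x^{p_n,q_n}_0 - (q_n/p_n)\cdot i|$ uniformly, is what allows Proposition \ref{compactness} to be applied cleanly, and this will be the only quantitative step in the proof.
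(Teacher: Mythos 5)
Your proof is correct and follows essentially the same approach as the paper: the rational case via Proposition \ref{periodicminimizersproposition}, and the irrational case by extracting a pointwise limit of vertically normalized periodic minimizers using Propositions \ref{compactness}, \ref{limitrotationnumbers}, and \ref{limitminimizers}. The only difference is that you spell out the rational case explicitly, whereas the paper's sketch treats only the irrational case and leaves the rational one implicit in Proposition \ref{periodicminimizersproposition}.
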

Especially when $\omega\in\R \backslash \Q$, the existence of one minimizer $x\in \mathcal{B}_{\omega}$ enforces the existence of many more, namely also all the $\tau_{k,l}x$ are minimizers. Because $\omega\notin \Q$, Proposition \ref{numbertheory} guarantees that $\tau_{k,l}x\neq \tau_{K,L}x$ unless $k=K$ and $l=L$. In view of Proposition \ref{limitminimizers}, the set of translates of $x$ and their pointwise limits
$$\mathcal{M}(x)=\overline{ \{\tau_{k,l}x\ | \ (k,l)\in \Z\times \Z \} }$$
therefore forms a very large set of minimizers. The collection $\mathcal{M}(x)$ is shift-invariant, closed under pointwise convergence and, due to the strong maximum principle, strictly ordered. In particular, all the elements of $\mathcal{M}(x)$ have rotation number $\omega$. 
\\
\indent The {\it Aubry-Mather set} $\mathcal{M}^{\omega}$ of rotation number $\omega$ is now defined as the minimal subset of $\mathcal{M}(x)$. Minimality here means that $\mathcal{M}^{\omega}$ is nonempty and does not contain any proper nonempty subset that is also shift-invariant and closed under pointwise convergence. It was shown by Bangert \cite{bangert87} that $\mathcal{M}^{\omega}$ actually does not depend on the choice of $x$. In the special case of twist maps, this latter fact was already known to Aubry and Le Daeron \cite{AubryDaeron}. \\
\indent We summarize the properties of the Aubry-Mather set in the following theorem:
\begin{theorem}
For $\omega\in\R \backslash \Q$, the Aubry-Mather set $\mathcal{M}^{\omega}$ is the unique nonempty, closed under pointwise convergence, shift-invariant, strictly ordered and minimal collection of minimizers of rotation number $\omega$. \\
\indent Every element of $\mathcal{M}^{\omega}$ is the pointwise limit of periodic minimizers.  Moreover, $\mathcal{M}^{\omega}$ is either topologically connected or a Cantor set.
\end{theorem}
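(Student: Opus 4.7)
My plan is to establish, in order: existence of a minimal subset $\mathcal{M}^\omega$, the ``pointwise limit of periodic minimizers'' property, uniqueness (independence of the choice of the initial Birkhoff minimizer $x$), and finally the connected-or-Cantor dichotomy.

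For existence, I would apply Zorn's lemma to the collection of nonempty, closed (under pointwise convergence), shift-invariant subsets of $\mathcal{M}(x)$, ordered by reverse inclusion. The key point is that every descending chain has nonempty intersection: after identifying sequences with their vertical integer translates, Proposition \ref{compactness} provides compactness of $\mathcal{B}_\omega / \Z$, so a nested family of nonempty closed sets has nonempty intersection, and this intersection is again closed and shift-invariant. Zorn therefore yields a minimal element $\mathcal{M}^\omega$. It automatically consists of minimizers (by Proposition \ref{limitminimizers} and the definition of $\mathcal{M}(x)$), is strictly ordered (the strong maximum principle forbids weakly ordered distinct solutions from merely touching), and every element has rotation number $\omega$ by Proposition \ref{omegaproposition}.

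For the limit property, I would use that every $y \in \mathcal{M}^\omega \subset \mathcal{M}(x)$ is by definition a pointwise limit of translates $\tau_{k_n, l_n} x$, while $x$ itself arose as a pointwise limit of periodic minimizers $x^{(m)} \in \X_{p_m, q_m}$. Since condition \textbf{B} guarantees that each $\tau_{k,l} x^{(m)}$ is still a periodic minimizer, a straightforward diagonal argument produces a sequence of periodic minimizers converging pointwise to $y$. Uniqueness (Bangert's result) is the hardest step. Given two candidate sets $\mathcal{M}_1, \mathcal{M}_2$, I would first show, via the strong maximum principle combined with Proposition \ref{numbertheory} and the irrationality of $\omega$, that any two distinct elements of $\mathcal{M}_1 \cup \mathcal{M}_2$ are strictly ordered. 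This common strict ordering allows one to project both sets onto $\R/\Z$ by $y \mapsto y_0 \!\!\mod \Z$ and use the semi-conjugacy with the (minimal) irrational rotation $R_\omega$ to force the two projections, and hence (by strict ordering and closedness) the two sets themselves, to coincide with $\mathcal{M}_1 \cap \mathcal{M}_2$. Minimality then gives $\mathcal{M}_1 = \mathcal{M}_2$.

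Finally, for the dichotomy I would exploit the projection $\pi : \mathcal{M}^\omega \to \R/\Z$ defined by $\pi(y) := y_0 \!\!\mod \Z$. Strict ordering together with Poincaré's estimate \eqref{poincareestimate} makes $\pi$ a continuous, order-preserving, shift-equivariant map that intertwines the shift $\tau_{1,0}$ on $\mathcal{M}^\omega$ with the rotation $R_\omega$ on $\R/\Z$. The image $\pi(\mathcal{M}^\omega)$ is a closed, nonempty, rotation-invariant subset, and minimality under the shift transfers to minimality under $R_\omega$. Because irrational rotations on $\R/\Z$ are themselves minimal, the image is either all of $\R/\Z$ (producing a topologically connected $\mathcal{M}^\omega$, i.e.\ a minimal foliation) or a Cantor set; combined with the one-to-one nature of $\pi$ coming from strict ordering, this pulls back to the same dichotomy on $\mathcal{M}^\omega$. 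The main obstacle throughout is the uniqueness argument, where one must delicately coordinate strict ordering, the strong maximum principle and the minimality property across two \emph{a priori} unrelated invariant sets.
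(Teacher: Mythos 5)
The paper does not actually prove this theorem: it states it as a survey of classical Aubry--Mather theory, citing Bangert \cite{bangert87} for the uniqueness statement and deferring all proofs to \cite{MramorRink1}. So there is no ``paper's own proof'' to match against; your proposal is a genuine blind attempt at the classical argument. Its architecture (Zorn for existence, a diagonal argument for the limit property, Bangert's uniqueness, and a circle semi-conjugacy for the dichotomy) is the right one, but two steps have real gaps.

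First, the uniqueness step. You want to show that any two distinct elements of $\mathcal{M}_1\cup\mathcal{M}_2$ are strictly ordered, and you invoke ``the strong maximum principle combined with Proposition \ref{numbertheory}''. But the strong maximum principle only upgrades weak ordering to strict ordering: it says that if $x<y$ are distinct solutions then $x\ll y$. It does not by itself show that two minimizers $x\in\mathcal{M}_1$, $y\in\mathcal{M}_2$ of the same irrational rotation number are comparable in the first place. That ``no-crossing'' statement is precisely the content of Bangert's argument, and it is the hard part: for twist maps it follows from Aubry's crossing lemma, but the paper itself stresses that the single-crossing property fails for general finite-range recurrence relations (see the discussion around \eqref{FKlinear}). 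Your sketch therefore presupposes what it needs to prove.

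Second, the dichotomy step. The map $\pi(y):=y_0 \bmod\Z$ does \emph{not} intertwine $\tau_{1,0}$ with the rotation $R_\omega$. Unwinding the definitions, $(\tau_{1,0}y)_0=y_{-1}$, so $\pi(\tau_{1,0}y)=y_{-1}\bmod\Z$, while $R_\omega(\pi(y))=y_0+\omega\bmod\Z$; by Proposition \ref{omegaproposition} these differ by a quantity bounded by $1+|\omega|$ but generally nonzero. To get a genuine semi-conjugacy to an irrational rotation one needs the hull-function construction: using strict ordering, irrationality of $\omega$ and Proposition \ref{numbertheory}, one defines an order-preserving, $\Z$-equivariant map from $\Sigma^{\omega}=\{y_0\mid y\in\mathcal{M}^\omega\}$ onto a closed subset of $\R$ that carries the $\Z^2$-action $(k,l)\mapsto \xi-\omega k+l$ (the same device that appears in the proof of Theorem \ref{confinement}). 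Only then does the minimality-of-irrational-rotations argument give the connected/Cantor dichotomy. As written, the intertwining you rely on is simply false, so the argument does not go through.

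The existence step (Zorn plus compactness from Proposition \ref{compactness}) and the ``limit of periodic minimizers'' step are fine.
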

We remind the reader that a Cantor set is a topological space that is closed, perfect and totally disconnected. More precisely, a topological space $Y$ is called {\it perfect} if every $y\in Y$ is the limit of points in its complement $Y\backslash \{y\}$, whereas $Y$ is called {\it totally disconnected} if for every $y_1, y_2 \in Y$ one can decompose $Y$ as the disjoint union $Y=Y_1\cup Y_2$ of closed subsets $Y_1$ and $Y_2$ with $y_1\in Y_1$ and $y_2\in Y_2$.\\
\indent
When $\mathcal{M}^{\omega}$ is connected, one says that it forms a {\it minimal foliation}. In case $\mathcal{M}^{\omega}$ is a Cantor set, we say that it forms a {\it minimal lamination}. This is because $\mathcal{M}^{\omega}$ then has many {\it gaps}. More precisely, one can then show that for every $\xi_1< \xi_2$ in $\R$ there are $\xi_1\leq \eta_1<\eta_2\leq \xi_2$ such that there does not exist any $y\in \mathcal{M}^{\omega}$ with $\eta_1 <y_0<\eta_2$.

\section{More about periodicity of Birkhoff sequences}\label{moreperiodicsection}
In this section we study the periodicity properties of Birkhoff sequences in detail. For the purpose of this paper, the main result of this section is Theorem \ref{regularitytheorem}. It is a quantitative near-periodicity result for maximally periodic Birkhoff sequences.  To the best of our knowledge this theorem is new. It will be a key ingredient for the proof of Theorem \ref{maintheorem}. As was explained in the introduction, it replaces the ``single-crossing'' property for minimizers that is used in \cite{Matherpeierls} and \cite{Matherdestruction}.
 
\subsection{Group theoretic remarks}\label{birkhoffpersection}
We first make some group theoretic remarks. It appears to us that most of these remarks have been made before in one form or another, see for instance \cite{Matherpeierls}. \\
\indent We would like to remind the reader that one can think of the shift operators $\tau_{k,l}$ as defining a group action of $\Z\times \Z$ on the space of sequences:
$$\tau: (\Z\times \Z)\times \R^{\Z}\to \R^{\Z}, \ ((k,l),x)\mapsto \tau_{k,l}x\ . $$
With this interpretation, $\X_{p,q}$ consists precisely of the sequences that are fixed by the subgroup 
$$J_{p,q}: = \left\{(np, nq)\ | \ n\in \Z\right\} \subset \Z\times \Z\ .$$
Because $\Z\times \Z$ is Abelian, when $\tau_{p, q}x=x$, then also $\tau_{p,q}(\tau_{k,l}x)=\tau_{k,l}(\tau_{p,q}x)=\tau_{k, l}x$, and thus $\tau$ leaves $\X_{p,q}$ invariant.  Moreover, because the elements of $J_{p,q}$ fix all elements of $\X_{p,q}$, we have that when $x\in \X_{p,q}$ and $(k,l)=(K,L)+ (np, nq)$ for some integer $n$, then  $\tau_{k,l}x=\tau_{K,L}(\tau_{p,q}^nx)=\tau_{K,L}x$. \\
\indent Together, these observations show that $\tau$ gives rise to an action of $(\Z\times \Z)/J_{p,q}$ on $\X_{p,q}$.  We now have the following
\begin{lemma}\label{tauaction}
When $(p,q)\in \N\times \Z$, $(k,l)\in \Z\times \Z$ and $x\in\X_{p,q}$, then
\begin{align}\label{l1periodic}
||\tau_{k,l}x-x||_{l_1(p)}: = \sum_{i=1}^{p}|(\tau_{k,l}x-x)_i| \geq \left| pl-qk \right|, 
\end{align}
with equality holding in (\ref{l1periodic}) when $x\in \mathcal{B}_{p,q}$.\\
\indent Thus, the action of $(\mathbb{Z}\times \Z)/ J_{p,q}$ on $\X_{p,q}$ is free if and only if $p$ and $q$ are relative prime. 
\end{lemma}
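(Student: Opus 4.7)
The plan is to first establish the inequality by computing the \emph{signed} sum $\sum_{i=1}^p (\tau_{k,l}x-x)_i$ exactly, and then apply the triangle inequality. The Birkhoff case will follow because, by Proposition \ref{numbertheory}, the sign of $\tau_{k,l}x-x$ is constant along $i\in\Z$, so the triangle inequality becomes an equality. The freeness statement is then a short number-theoretic consequence.

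First I would unwind the definitions. Since $(\tau_{k,l}x)_i=x_{i-k}+l$, one has
\[
\sum_{i=1}^p (\tau_{k,l}x-x)_i \;=\; pl+\sum_{i=1}^p(x_{i-k}-x_i)\;=\; pl+\sum_{i=1-k}^{p-k}x_i-\sum_{i=1}^p x_i.
\]
Using $x_{i+p}=x_i+q$, a telescoping argument shows that shifting the index window of length $p$ by one unit changes the sum by $q$, so $\sum_{i=1-k}^{p-k}x_i = \sum_{i=1}^p x_i - kq$. Hence $\sum_{i=1}^p(\tau_{k,l}x-x)_i = pl-qk$, and the triangle inequality gives
\[
\|\tau_{k,l}x-x\|_{l_1(p)} \;\geq\; \Bigl|\,\textstyle\sum_{i=1}^p(\tau_{k,l}x-x)_i\,\Bigr| \;=\; |pl-qk|.
\]

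For the equality statement, suppose $x\in\mathcal{B}_{p,q}$, so $x$ has rotation number $\omega=q/p$. Proposition \ref{numbertheory} says that either $\tau_{k,l}x\geq x$ or $\tau_{k,l}x\leq x$ pointwise, according to the sign of $-\omega k+l = (pl-qk)/p$. Therefore all terms of the sum have the common sign and the triangle inequality collapses to equality.

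For the freeness statement, recall that the induced action of $(\Z\times\Z)/J_{p,q}$ on $\X_{p,q}$ is free iff $\tau_{k,l}x=x$ forces $(k,l)\in J_{p,q}$. From the inequality already proved, $\tau_{k,l}x=x$ implies $pl=qk$. If $\gcd(p,q)=1$, then $p\mid qk$ gives $p\mid k$, say $k=np$, and then $l=nq$, so $(k,l)\in J_{p,q}$; the action is free. Conversely, if $d:=\gcd(p,q)>1$, set $(k,l):=(p/d,q/d)\notin J_{p,q}$; any $x\in\X_{p/d,q/d}\subset\X_{p,q}$ (such sequences clearly exist) is fixed by $\tau_{k,l}$, so the action is not free. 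The main (very minor) obstacle is keeping track of signs and of the index-window shift in the first display; once those are settled the rest is essentially formal.
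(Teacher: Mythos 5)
Your proof is correct, and it takes a genuinely different route to the key inequality. The paper establishes the lower bound by first computing $\tau_{k,l}^{p}x = x+(pl-qk)$ explicitly, then observing that the $l_1(p)$-norm is invariant under $\tau_{k,0}$ so that $\|\tau_{k,l}^{j+1}x-\tau_{k,l}^{j}x\|_{l_1(p)}=\|\tau_{k,l}x-x\|_{l_1(p)}$ for all $j$, and finally applying the triangle inequality to the telescoping sum $\tau_{k,l}^{p}x-x=\sum_{j=1}^{p}(\tau_{k,l}^{j}x-\tau_{k,l}^{j-1}x)$. Your computation of the \emph{signed} sum $\sum_{i=1}^{p}(\tau_{k,l}x-x)_i=pl-qk$ by an index-shift using $x_{i+p}=x_i+q$ is more direct: it produces the bound in a single application of the triangle inequality and bypasses both the formula for $\tau_{k,l}^{p}x$ and the shift-invariance of the norm. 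For the equality case, both arguments reduce to the same observation — that the Birkhoff property forces all terms $(\tau_{k,l}x-x)_i$ to have a common sign — though you invoke Proposition~\ref{numbertheory} where the definition of Birkhoff already suffices (and is strictly what you need, since Proposition~\ref{numbertheory} is silent when $-\omega k+l=0$). For non-freeness when $\gcd(p,q)=d>1$, you exhibit an explicit fixed point in $\X_{p/d,q/d}$, whereas the paper re-uses the equality case with a Birkhoff sequence; your construction is self-contained and arguably cleaner.
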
 
 
\begin{proof} 
Let $x\in \X_{p,q}$ and let $(k, l) \in \Z\times \Z$ be given. Then we can remark that
$$\tau_{k,l}^px = \tau_{pk,pl}x= \tau_{0,pl-qk} \circ \tau_{pk,qk} x =  \tau_{0,pl-qk} \circ \tau_{p,q}^k x =  \tau_{0, pl-qk} x =  x + (pl-qk)\ .$$
Using that $p\geq 1$, this shows that 
$$||\tau^p_{k,l}x-x||_{l_1(p)}= p \cdot \left|pl-qk\right|\ .$$
The next remark is that $\tau_{k,l}^{j+1}x-\tau^j_{k,l}x=\tau_{k,0}(\tau_{k,l}^jx-\tau_{k,l}^{j-1}x)$ and thus, by induction, that $||\tau_{k,l}^{j+1}x-\tau_{k,l}^jx||_{l_1(p)}=||\tau_{k,l}x-x||_{l_1(p)}$. For a general $x\in \X_{p,q}$, this implies that 
$$||\tau_{k,l}^px-x||_{l_1(p)}\leq ||\tau^{p}_{k,l}x-\tau_{k,l}^{p-1}x||_{l_1(p)} + \ldots + ||\tau_{k,l}x-x||_{l_1(p)}=p\cdot ||\tau_{k,l}x-x||_{l_1(p)}\ .$$ 
Hence, it holds that $||\tau_{k,l}x-x||_{l_1(p)}\geq |pl-qk|$.\\
\indent When $x\in \mathcal{B}_{p,q}$, then either $\tau_{k,l}x>x$ or $\tau_{k,l}x=x$ or $\tau_{k,l}x<x$. In the first case, $\tau_{k,l}^jx=\tau_{k,l}^{j-1}(\tau_{k,l}x)>\tau_{k,l}^{j-1}x$ for all $j$. Similarly, in the second case, $\tau_{k,l}^{j}x<\tau_{k,l}^{j-1}x$ for all $j$, while in the third case, $\tau_{k,l}^jx=\tau_{k,l}^{j-1}x$ for all $j$.\\
\indent In either of the three cases above it follows that $||\tau_{k,l}^px-x||_{l_1(p)} = \sum_{j=1}^{p}||\tau^{j}_{k,l}x-\tau^{j-1}_{k,l}x||_{l_1(p)}$. This implies that when $x\in \mathcal{B}_{p,q}$, then $||\tau_{k,l}x-x||_{l_1(p)}=|pl-qk|$. \\
\indent When $p$ and $q$ are relative prime, then $(k,l)$ represents a nontrivial equivalence class in $(\Z\times \Z)/J_{p,q}$ if and only if $pl-qk\neq 0$. In this case, $||\tau_{k,l}x-x||_{l_1(p)}\geq 1$ for all $x\in \X_{p,q}$ and thus the action is free. In fact, we explicitly observe here that the action is properly discontinuous. \\
\indent Conversely, if $p$ and $q$ are not relative prime, then there exist $k$ and $l$ with $pl-qk=0$ but $(k,l)\neq (np, nq)$ for any $n\in \Z$. The latter means that $(k,l)$ represents a nontrivial element of $(\Z\times \Z)/J_{p,q}$, whereas for any $x\in \X_{p,q}$ that is Birkhoff, it holds that $||\tau_{k,l}x-x||_{l_1(p)}=0$. This means that the action is not free.
\end{proof} 
\noindent 
We recall that when $p$ and $q$ are relative prime, then there exist $s,t\in \Z$ for which
\begin{align}\label{pqst}
pt-qs=1\ . 
\end{align}
Modulo transformations of the form $(s, t)\mapsto (s,t)+(np, nq)$ these integers are unique. Using (\ref{pqst}), it is easy to verify that for all $(k,l)\in \Z\times \Z$ one has 
$$(k,l)= (kt-ls)(p,q) + (pl-qk)(s,t)\ .$$
This shows that $(k,l) \equiv (pl-qk)(s,t)$ modulo $J_{p,q}$. In particular, $(\Z\times \Z)/ J_{p,q}$ is generated by the equivalence class of $(s,t)$. \\
\indent For $p, q, s$ and $t$ satisfying (\ref{pqst}), we will denote by 
\begin{align}\label{upqdefinition}
U_{p,q}:=\tau_{s,t}: \X_{p,q}\to\X_{p,q}
\end{align}
the corresponding translation map. It is characterized by the fact that 
\begin{align}\label{propertiesU}
\mbox{for} \ x\in \mathcal{B}_{p,q}\ \mbox{it holds that}\ U_{p,q}x> x \  \mbox{and} \ ||U_{p,q}x-x||_{l_1(p)}=1\ .
\end{align}
The first statement in (\ref{propertiesU}) holds because $j\mapsto U_{p,q}^jx \in \mathcal{B}_{p,q}$ is monotone when $x\in \mathcal{B}_{p,q}$, while $U_{p,q}^px=x+(pt-qs) = x+1 \gg x$.\\ 
\indent The second statement in (\ref{propertiesU}) directly follows from (\ref{l1periodic}) and (\ref{pqst}).

\subsection{A near-periodicity theorem}\label{nearperiodicitysection}
The aim of this section is to prove Theorem \ref{regularitytheorem} below. It can be interpreted as a quantitative near-periodicity result for Birkhoff sequences. \\
\indent To motivate this theorem, let us consider, for some $\omega\in \R$, the linear sequence $x^{\omega}$ defined by $x^{\omega}_j:=x_0+ \omega\cdot j$. For integers $(p, q)\in \N\times \Z$, it then holds for all $j\in \Z$ that 
\begin{align}\label{easycomputation}
(\tau_{p, q}x^{\omega})_j - x^{\omega}_j = x^{\omega}_{j-p} -x^{\omega}_j + q = q -\omega p \ .
\end{align}
Hence, when $q-\omega p$ is small, then  $x^{\omega}$ is ``almost'' $(p, q)$-periodic. Theorem \ref{regularitytheorem} says that such a property is true for all $x\in \mathcal{B}_{\omega}$ that are ``maximally periodic'', see Definition \ref{maxperdef}.\\
\indent The precise statement is the following:
\begin{theorem}[A near-periodicity theorem]\label{regularitytheorem}
Let $p \in \N$ and $q \in \Z$ be relative prime, $\omega\in \R$ and $r\geq 1$ and let $i_1\leq i_2$ be integers. \\
\indent We denote by $\lceil \alpha \rceil$ the smallest integer bigger than or equal to $\alpha$ and we define 
$$a=a(p, q,\omega,i_2-i_1):=\left\lceil (i_2-i_1) \left|q- \omega p \right|\right\rceil\ .$$
Assume that $x\in\mathcal{B}_{\omega}$ is maximally periodic. Then there exists an $i_0\in \Z$ so that for all integers $m, n$ with $i_1+r\leq i_0+m p, i_0 + n p\leq i_2-r+1$, we have
\begin{align}\label{pigeintheorem}
|| \tau^{-m}_{p,q}x-\tau_{p,q}^{-n}x||_{l_1[i_0-r, i_0+r-1]}:=\sum_{j=i_0-r}^{i_0+r-1} |x_{m p+j} - x_{n p+j} - (m-n) q| \leq \frac{2r\cdot a}{p}\ .
\end{align}

\end{theorem}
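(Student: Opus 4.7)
\textbf{Setup and reduction.} Assume first $\delta := q - \omega p > 0$; the case $q = \omega p$ is trivial (maximal periodicity then gives $\tau_{p,q}x = x$, so both sides of (\ref{pigeintheorem}) vanish) and $q < \omega p$ is symmetric. Set $y^m := \tau_{p,q}^{-m}x$; by Proposition~\ref{numbertheory} applied to the shift $(p,q)$ and by maximal periodicity, $\tau_{p,q}x \geq x$ pointwise and hence the family $(y^m)_{m \in \Z}$ is pointwise non-increasing in $m$. Introduce the Birkhoff defect $\phi_l := x_l - x_0 - \omega l$, which satisfies $|\phi_l|\leq 1$ by Proposition~\ref{omegaproposition}, and its window sum $\Phi(N):=\sum_{l=N-r}^{N+r-1}\phi_l$, so $|\Phi|\leq 2r$. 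A direct substitution using $x_l = x_0 + \omega l + \phi_l$ then yields, for $m > n$, the closed form
\[
\|y^m-y^n\|_{l_1[i_0-r,i_0+r-1]} = 2r(m-n)\delta + \Phi(i_0+np) - \Phi(i_0+mp).
\]

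\textbf{Monotonicity along arithmetic progressions.} Since $q \in \Z$ and $q > \omega p$, we have $q \geq \lceil \omega p \rceil$, so the Birkhoff bound $x_{l+p}-x_l \leq q$ (coming from $\tau_{p,q}x\geq x$) gives $\phi_{l+p}-\phi_l\leq\delta$ for every $l$; summing over the window yields the slow-growth inequality $\Phi(N+p) \leq \Phi(N) + 2r\delta$. Consequently the auxiliary function $\hat\Phi(N):=\Phi(N)-\tfrac{2r\delta}{p}N$ is non-increasing along any arithmetic progression of step $p$. Combined with the closed form above, inequality (\ref{pigeintheorem}) for all admissible $m,n$ is equivalent to the total descent
\[
\hat\Phi(N_-)-\hat\Phi(N_+)\;\leq\;\tfrac{2ra}{p}
\]
on the AP $\mathcal N(i_0):=(i_0+p\Z)\cap[i_1+r,i_2-r+1]$, with $N_\pm$ its minimal and maximal elements. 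Expanding, the linear part contributes $\tfrac{2r\delta}{p}(N_+-N_-) \leq \tfrac{2r\delta}{p}(i_2-i_1) \leq \tfrac{2ra}{p}$ by the definition of $a$, so what remains is to ensure $\Phi(N_-)\leq\Phi(N_+)$ for an appropriate residue class.

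\textbf{Pigeonhole choice of $i_0$ (main obstacle).} I will select $i_0$ through its residue $\rho\equiv i_0\pmod p$. As $\rho$ varies over $\{0,\dots,p-1\}$, the extremes $N_-(\rho)$ and $N_+(\rho)$ range bijectively over $\{i_1+r,\dots,i_1+r+p-1\}$ and $\{i_2-r+2-p,\dots,i_2-r+1\}$ respectively. Iterating the slow-growth inequality $\Phi(N+p)\leq\Phi(N)+2r\delta$ exactly $(N_+-N_-)/p$ times to pair each $N_-(\rho)$ with its translate $N_+(\rho)$ on the same residue, and summing, I expect the bound
\[
\sum_{\rho=0}^{p-1}\bigl[\Phi(N_+(\rho))-\Phi(N_-(\rho))\bigr]\;\leq\;2r\delta\,(i_2-i_1-2r-p+2)\;\leq\;2ra,
\]
so that at least one residue $\rho$ yields $\Phi(N_+(\rho))-\Phi(N_-(\rho)) \leq \tfrac{2ra}{p}$; taking $i_0$ to be any representative of that class and combining with the linear bound gives (\ref{pigeintheorem}). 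The main difficulty is that the naive averaging using only $|\Phi|\leq 2r$ produces the weaker conclusion $\tfrac{2ra}{p}+O(r)$; the delicate point is that one must exploit the slow-growth inequality along the AP (rather than the pointwise bound on $\Phi$) to make the two boundary window-sums of $\Phi$ cancel cleanly, thereby matching the sharp constant $\tfrac{2ra}{p}$ in the statement.
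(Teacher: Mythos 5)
Your setup and reduction are correct and elegant: with $\delta = q - \omega p > 0$, $\phi_l = x_l - x_0 - \omega l$, $\Phi(N) = \sum_{l=N-r}^{N+r-1}\phi_l$ and $\hat\Phi(N) = \Phi(N) - \tfrac{2r\delta}{p}N$, a short computation does give $\|\tau_{p,q}^{-m}x - \tau_{p,q}^{-n}x\|_{l_1[i_0-r,i_0+r-1]} = \hat\Phi(i_0+np) - \hat\Phi(i_0+mp)$ for $n<m$, so the target is to find a residue class along which $\hat\Phi(N_-) - \hat\Phi(N_+) \leq \tfrac{2ra}{p}$. However, the pigeonhole step runs in the wrong direction. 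The slow-growth inequality $\Phi(N+p) \leq \Phi(N) + 2r\delta$ is \emph{equivalent} to the monotonicity $\hat\Phi(N+p)\leq \hat\Phi(N)$; iterating it only recovers $\hat\Phi(N_-) \geq \hat\Phi(N_+)$, i.e.\ the total descent is $\geq 0$. Your pigeonhole bound $\sum_\rho[\Phi(N_+(\rho)) - \Phi(N_-(\rho))] \leq 2ra$ yields, for some $\rho$, an upper bound on $\Phi(N_+)-\Phi(N_-)$, hence a \emph{lower} bound on $\Phi(N_-)-\Phi(N_+)$, which says nothing about the total descent $\hat\Phi(N_-)-\hat\Phi(N_+)$ being small. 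What is actually needed is an upper bound on $\Phi(N_-)-\Phi(N_+)$, equivalently $\sum_{N\in L}\Phi(N)\leq \sum_{N\in U}\Phi(N) + O(ra)$ for the two boundary windows $L,U$ of length $p$; the slow-growth inequality provides no such control (and the reverse inequality $\phi_{l+p}-\phi_l > \delta-1$, obtained from $\tau_{p,q-1}x<x$ when $\delta<1$, only gives $\Phi(N_-)-\Phi(N_+)\lesssim 2r(1-\delta)(i_2-i_1)/p$, which loses the factor $a$ entirely when $a\ll i_2-i_1$). The pointwise bound $|\Phi|\leq 2r$ is likewise an order of magnitude too weak.

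The missing ingredient is precisely what the paper supplies via the Confinement Theorem~\ref{confinement}: using the maximal periodicity of $x$ (through the semiconjugating circle map $\psi$), one squeezes $x$ between a $(p,q)$-periodic Birkhoff sequence $y$ and its translate $U_{p,q}^a y$ over the entire interval $[i_1,i_2]$. The $(p,q)$-periodicity of $y$ is exactly what makes the pigeonhole clean: by Lemma~\ref{tauaction}, $\|U_{p,q}^a y - y\|_{l_1(p)} = a$ exactly, so $\sum_{i=1}^p\|U_{p,q}^a y - y\|_{l_1[i-r,i+r-1]} = 2r\|U_{p,q}^a y - y\|_{l_1(p)} = 2ra$ with all summands non-negative, giving a window of $\ell_1$-mass $\leq \tfrac{2ra}{p}$ (Proposition~\ref{pigeonholeproposition}). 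Your proposal bypasses confinement and works directly with $x$; without the periodic comparison sequence, the two boundary window sums of $\Phi$ do not cancel, and the argument does not close. Note also that maximal periodicity, which you invoke only for the trivial case $\delta=0$, is in fact used essentially in the paper's proof of confinement to define $\psi$.
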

When $x=x^{\omega}$ is a linear sequence of rotation number $\omega$, then estimate (\ref{pigeintheorem}) holds for every $i_0\in \Z$. This follows from a computation similar to (\ref{easycomputation}). Thus, Theorem \ref{regularitytheorem} can be seen as a generalization of (\ref{easycomputation}).
\\
\indent It should also be noted that the integer $i_0$ in Theorem \ref{regularitytheorem} is not unique. We will later always choose $-p<i_0\leq 0$.\\
\indent 
To prove Theorem \ref{regularitytheorem}, we will need two preliminary results. The first one is a direct application of the observations we made in Section \ref{birkhoffpersection} and the pigeonhole principle:
\begin{proposition}\label{pigeonholeproposition}
Let $(p, q)\in \N\times \Z$ be relative prime, let $y\in \mathcal{B}_{p, q}$ and let $r\geq 1$ and $a\geq 0$ be integers. We denote $U_{p,q} = \tau_{s,t} :\X_{p,q}\to \X_{p,q}$ with $pt-qs=1$. \\
\indent Then there exists an $i_0\in \Z$ for which
$$||U_{p,q}^ay-y||_{l_1[i_0-r, i_0+r-1]} = \sum_{j=i_0-r}^{i_0+r-1} |(U_{p,q}^ay)_j-y_j| \leq \frac{2r\cdot a}{p}\ .$$
\end{proposition}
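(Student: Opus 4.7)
The plan is to reduce this to a pigeonhole argument after extracting two structural facts about the sequence $j \mapsto f(j) := |(U_{p,q}^a y)_j - y_j|$.

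First I would observe that $f$ is $p$-periodic. Since $y \in \X_{p,q}$ and $U_{p,q}:\X_{p,q}\to \X_{p,q}$, both $y$ and $U_{p,q}^a y$ lie in $\X_{p,q}$, so shifting the index by $p$ adds $q$ to each of them, and the two $q$'s cancel inside the absolute value. Hence $f(j+p) = f(j)$ for all $j\in\Z$.

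Next I would compute $\sum_{j=1}^{p} f(j) = a$. The key observation is that by property \eqref{propertiesU} applied to $U_{p,q}^{k-1}y\in \mathcal{B}_{p,q}$ (which is still Birkhoff since $U_{p,q}$ preserves $\mathcal{B}_{p,q}$), we have $U_{p,q}^k y > U_{p,q}^{k-1} y$ for every $k\geq 1$. Therefore $U_{p,q}^a y > y$ pointwise and the absolute value can be dropped, so the sum telescopes:
\begin{align*}
\sum_{j=1}^{p} f(j) &= \sum_{j=1}^{p}\left((U_{p,q}^a y)_j - y_j\right) \\
&= \sum_{k=1}^{a} \sum_{j=1}^{p}\left((U_{p,q}^k y)_j - (U_{p,q}^{k-1} y)_j\right) \\
&= \sum_{k=1}^{a} \|U_{p,q}(U_{p,q}^{k-1}y) - U_{p,q}^{k-1}y\|_{l_1(p)} = a,
\end{align*}
where the last equality uses the second half of \eqref{propertiesU} applied to the Birkhoff sequence $U_{p,q}^{k-1}y$.

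Finally I would invoke pigeonhole. Summing the window $\sum_{j=i_0-r}^{i_0+r-1} f(j)$ over $i_0 \in \{1,\ldots,p\}$ and switching the order of summation,
\begin{equation*}
\sum_{i_0=1}^{p} \sum_{j=i_0-r}^{i_0+r-1} f(j) = \sum_{k=-r}^{r-1} \sum_{i_0=1}^{p} f(i_0+k) = 2r \cdot a,
\end{equation*}
where in the last step I used the $p$-periodicity of $f$ to identify each inner sum with $a$. Consequently at least one $i_0\in\{1,\ldots,p\}$ achieves a window-sum no larger than the average $2r a/p$, which is the desired conclusion.

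There is no real obstacle here: the statement is essentially a bookkeeping consequence of the telescoping monotonicity of the iterates $U_{p,q}^k y$ combined with averaging over the fundamental domain. The only point to be careful about is that one genuinely needs the strict order $U_{p,q}^k y > U_{p,q}^{k-1} y$ (not merely $\geq$) to remove the absolute values, and this is precisely what \eqref{propertiesU} delivers once one notes $U_{p,q}^{k-1} y \in \mathcal{B}_{p,q}$.
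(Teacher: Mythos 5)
Your proof is correct and follows essentially the same strategy as the paper: establish $\|U_{p,q}^ay-y\|_{l_1(p)}=a$ and then apply pigeonhole to a window of width $2r$ over the period $p$. The only (minor) difference is that you re-derive the $l_1(p)$ equality by telescoping the $a$ applications of property \eqref{propertiesU}, whereas the paper invokes the general formula of Lemma \ref{tauaction} directly; also note that weak ordering $U_{p,q}^ky\geq U_{p,q}^{k-1}y$ would already suffice to drop the absolute values, so strictness is not actually needed at that point.
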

\begin{proof}
According to Lemma \ref{tauaction} it holds for all $y\in \mathcal{B}_{p,q}$ that
\begin{align} \label{l1property} 
||U_{p,q}^ay-y||_{l_1(p)} = |a(pt-qs)|=a\ .
\end{align}
We claim that this implies that there exists an $ i_0$ for which 
\begin{align}\label{confinementagain}
\sum_{j=i_0-r}^{i_0+r-1}|(U_{p,q}^ay)_j - y_j| \leq \frac{2r\cdot a}{p}\ .
\end{align}
This is an easy consequence of the pigeonhole principle. Indeed, if it were true that $\sum_{j=i-r}^{i+r-1}|(U_{p,q}^ay)_j - y_j| > \frac{2r\cdot a}{p}$ for all $i\in\Z$, then it would hold that 
$$2r\cdot a < \sum_{i=1}^{p} \sum_{j=i-r}^{i+r-1} \left| (U^a_{p,q}y)_j-y_j \right| = \sum_{j=-r}^{r-1} \sum_{i=1}^{p} |(U^a_{p,q}y)_{i+j}-y_{i+j}| = 2r ||U_{p,q}^ay - y||_{l_1(p)}\ .$$ 
The first equality is an ordinary re-summation and the second equality holds because $y$ is $(p,q)$-periodic. This is a contradiction.  
\end{proof}
The second preliminary result of this section tells us how we can squeeze part of a maximally periodic Birkhoff sequence in between certain translates of a periodic Birkhoff sequence of another rotation number:
\begin{theorem}[Confinement]\label{confinement}
Let $(p,q) \in  \N\times \Z$ and $\omega\in \R$ be given and assume that $p$ and $q$ are relative prime. We again denote $U_{p,q} = \tau_{s,t} :\X_{p,q}\to \X_{p,q}$ with $pt-qs=1$. \\
\indent Then for every maximally periodic $x \in \mathcal{B}_{\omega}$ and for all integers $i_1 \leq i_2$, there exists a $y\in \mathcal{B}_{p,q}$ so that
\begin{align}\label{confinementstatement}
y_j \leq x_j \leq (U_{p,q}^ay)_j \ \mbox{for every} \ i_1\leq j \leq i_2\ .
\end{align}
Here, 
$$a=a(p,q,\omega,i_2-i_1)= \left\lceil (i_2-i_1)\left| q -\omega p \right|\right\rceil\ .$$
\end{theorem}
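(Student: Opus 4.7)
I would reduce first to the case $q - \omega p > 0$: the opposite sign is symmetric under $x \mapsto -x$, and $q - \omega p = 0$ combined with $\gcd(p, q) = 1$ forces $\omega = q/p$, so that Proposition \ref{maxrelper} and the maximal periodicity of $x$ give $x \in \mathcal{B}_{p,q}$ and one takes $y := x$ with $a = 0$. Hereafter $q - \omega p > 0$, so by Proposition \ref{numbertheory} $\tau_{p,q} x > x$ strictly; in particular $x$ is non-decreasing, and for every $z \in \mathcal{B}_{p,q}$ one has $U_{p,q} z > z$ pointwise.

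The candidate $y$ is then obtained as a pointwise-maximal element of
\[
\mathcal{L} := \{z \in \mathcal{B}_{p,q} : z \leq x \text{ on } [i_1, i_2]\}.
\]
This set is nonempty because any $z_0 \in \mathcal{B}_{p,q}$ (existing by Proposition \ref{periodicminimizersproposition}) translated far enough downward lies in $\mathcal{L}$. Compactness of $\mathcal{B}_{p,q}/\mathbb{Z}$ (Proposition \ref{compactness}) together with the pointwise-closedness of the constraint yields a pointwise-maximal $y \in \mathcal{L}$. Maximality forces $y_{j^\ast} = x_{j^\ast}$ at some touching index $j^\ast \in [i_1, i_2]$, since otherwise a small upward translate of $y$ would remain in $\mathcal{L}$.

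To show $U_{p,q}^a y \geq x$ on $[i_1, i_2]$, one compares the $(p,q)$-periodic nonnegative sequence $\delta := U_{p,q}^a y - y$ — whose sum over one period equals $a$ by Lemma \ref{tauaction} — with the nonnegative sequence $x - y$ on $[i_1, i_2]$. Anchoring the Poincaré estimate (Proposition \ref{omegaproposition}) at $j^\ast$ yields
\[
x_j - y_j \;\leq\; (\omega - q/p)(j - j^\ast) + O(1) \quad\text{for } j \in [i_1, i_2],
\]
so that the total accumulation of $x - y$ across the interval is dominated by $|q - \omega p|(i_2 - i_1)/p$, which is in turn dominated by $a/p$ after ceiling rounding.

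The main obstacle is passing from this average comparison to the required \emph{pointwise} inequality $\delta_j \geq x_j - y_j$: both sequences may oscillate across $[i_1, i_2]$. The resolution uses the maximality of $y$ crucially: if $\delta_j < x_j - y_j$ at some $j$, then a suitable local upward perturbation of $y$ at $j$ (keeping $y$ within $\mathcal{B}_{p,q}$) would yield a strictly larger element of $\mathcal{L}$, contradicting maximality. Making this rigorous requires a careful construction of admissible local perturbations inside $\mathcal{B}_{p,q}$ — this is the substitute, in the present general setting, for the single-crossing property used by Mather for twist maps, and is the technical heart of the proof. It is naturally packaged in the spirit of the maximum-principle comparison underlying Proposition \ref{periodicminimizersproposition}, applied now to the pair $(y, U_{p,q}^a y)$ rather than to minimizers.
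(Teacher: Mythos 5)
Your reduction to a single sign and your treatment of the linear case mirror the paper's opening moves, but from there your construction diverges, and there is a genuine gap at the heart of your argument that you yourself flag but do not fill.

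The paper does not build $y$ as a maximal element of the constraint set $\mathcal{L}$. Instead, it constructs $y$ \emph{explicitly} by semiconjugation to the linear model: define a nondecreasing map $\psi:\R\to\R$ with $\psi(\xi+1)=\psi(\xi)+1$ by setting $\psi\bigl(x_{i_1}+\omega(k-i_1)+l\bigr):=x_k+l$ on the extended orbit of the linear sequence $x^\omega$, check that $\psi$ is well defined (this is exactly where maximal periodicity enters: if $\omega(k-K)+l-L=0$ then $\tau_{K-k,l-L}x=x$) and nondecreasing (this is exactly where Proposition \ref{numbertheory} enters), and then set $y_j:=\psi\bigl(x_{i_1}+\frac{q}{p}(j-i_1)\bigr)$. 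The confinement (\ref{confinementstatement}) is then an immediate consequence of monotonicity of $\psi$ applied to the linear inequalities $y^{q/p}_j\le x^\omega_j\le (U_{p,q}^a y^{q/p})_j$, which you also derived. No comparison, no maximum principle, no perturbation argument: the pointwise inequality is inherited for free from the linear case because $\psi$ pushes forward \emph{all three} sequences simultaneously.

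Your approach replaces this explicit construction by choosing a ``pointwise-maximal'' $y\in\mathcal{L}$ and then trying to upgrade an averaged estimate (that the $l_1$-mass of $x-y$ over a period is $O(a)$) to the needed pointwise bound $x_j-y_j\le\delta_j$. You correctly identify this upgrade as the technical heart, but you do not carry it out; you only assert that ``a suitable local upward perturbation of $y$'' would contradict maximality. That step is precisely what is unproved, and it is not clear it can be made rigorous in this form: being a maximal element of a partially ordered family does not forbid $\delta_j=(U_{p,q}^a y)_j-y_j$ from being small exactly where $x_j-y_j$ is large, since $\delta$ and $x-y$ can oscillate out of phase across $[i_1,i_2]$, and a single touching point $j^\ast$ with $y_{j^\ast}=x_{j^\ast}$ controls neither sequence away from $j^\ast$. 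There is also no argument given that a maximal element of $\mathcal{L}$ even exists: Zorn's lemma would need the union of a chain in $\mathcal{L}$ to land back in $\mathcal{B}_{p,q}$ and in $\mathcal{L}$, and compactness of $\mathcal{B}_{p,q}/\Z$ does not by itself produce a maximal element of a partial order. So while your reduction and your treatment of the linear case are sound, the nonlinear case as you present it is incomplete; the missing idea is the order-preserving semiconjugation $\psi$, which converts the confinement problem for general maximally periodic $x$ into the already-solved linear problem.
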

\begin{proof} 
We will prove the theorem in the case that $\frac{q}{p}\leq \omega$. The case that $
\frac{q}{p}\geq \omega$ is similar.\\
\indent So let $x\in \mathcal{B}_{\omega}$ be maximally periodic and let $i_1\leq i_2$. We will first prove the theorem when $x$ happens to be linear, that is when $x=x^{\omega}\in \mathcal{B}_{\omega}$, with $x^{\omega}$ defined as
$$x^{\omega}_j := x_{i_1} + \omega(j-i_1)\ .$$
In this case, also $y$ can be chosen linear, namely $y=y^{q/p}$ does the job, with
$$y^{q/p}_j:= x_{i_1} + \frac{q}{p}(j-i_1)\ .$$
Indeed, for all $j\geq i_1$ it holds that $y^{q/p}_j \leq x^{\omega}_j$, because $\frac{q}{p}\leq \omega$. \\
\indent Moreover, using that $pt-qs=1$, one computes that for every integer $a\geq 0$,
$$(U_{p,q}^ay^{q/p})_j  = x_{i_1} + \frac{q}{p}(j-as - i_1) +at =  x_{i_1} +\frac{q}{p}(j-i_1)+ \frac{a}{p}\ ,$$
so that 
$$x^{\omega}_j \leq (U_{p,q}^ay^{q/p})_j \ \mbox{so long as} \ a \geq (j-i_1) \left( \omega p - q \right) \ . $$
In particular, $x^{\omega}_j \leq (U_{p,q}^ay^{q/p})_j$ for all $i_1\leq j \leq i_2$ if we choose $a = \lceil (i_2-i_1) \left| q - \omega p \right|\rceil$. This proves the theorem in case $x\in \mathcal{B}_{\omega}$ is linear. \\
\indent Now we consider the situation that $x \in \mathcal{B}_{\omega}$ is nonlinear, but still maximally periodic. Then we define a function $\psi:\R\to\R$ that sends the linear sequence $x^{\omega}$ to the nonlinear sequence $x$. \\
\indent More precisely, we first define $\psi$ on the set 
$$\Sigma_{x^{\omega}}:=\{x_{i_1} + \omega (k-i_1) + l \ | k,l\in \Z\}\subset \R\ .$$
This is done by setting 
$$\psi(x_{i_1}+\omega(k-i_1) + l ):= x_{k}+l \ .$$
The function $\psi$ is well-defined because when $x_{i_1}+\omega(k-i_1) +l = x_{i_1}+\omega (K-i_1)+L$, then $\omega(k-K)+l-L=0$ and hence, since $x$ is maximally periodic, $\tau_{K-k,l-L}x=x$, that is $\tau_{-k,l}x=\tau_{-K, L}x$. In particular, $x_{k}+l=x_{K}+L$.
\\ \indent More importantly, $\psi$ is nondecreasing: when $x_{i_1}+\omega(k-i_1) + l > x_{i_1}+\omega(K-i_1) + L$, then $\omega(k-K)+l-L>0$ and hence by Proposition \ref{numbertheory} it must hold that $\tau_{K-k,l-L}x>x$, i.e. that $\tau_{-k,l}x> \tau_{-K, L}x$. In particular, $\psi(x_{i_1}+\omega(k-i_1) + l) = x_{k}+l \geq x_{K}+L = \psi(x_{i_1}+\omega(L-i_1) + L)$.
 \\
\indent It is also clear from the definition that $\psi(\xi+1)=\psi(\xi)+1$ at the points where $\psi$ is defined.
\\
\indent These observations imply that $\psi$ can be extended to a nondecreasing map $\psi:\R\to\R$ with $\psi(\xi+1)=\psi(\xi)+1$. We now define the sequence $y$ by
$$y_j := \psi(y^{q/p}_j) = \psi\left(x_{i_1} + \frac{q}{p}\left(j-i_1\right)\right)\ .$$
We remark that $y\in \mathcal{B}_{p,q}$. This follows from the properties of $\psi$, that is 
$$y_{j-k}+l = \psi\left(x_{i_1}+\frac{q}{p}(j-i_1)-\frac{q}{p}k + l\right)  \left\{ \begin{array}{lll} \leq y_j  & \mbox{when} & -\frac{q}{p}k+l<0\ ,  \\ = y_j  & \mbox{when} & -\frac{q}{p}k+l=0\ , \\  \geq y_j  & \mbox{when} & -\frac{q}{p}k+l>0\ .  \end{array} \right.$$
Moreover, $y$ satisfies (\ref{confinementstatement}). This is true because $y_j=\psi(y^{q/p}_j)$, $x_j=\psi(x^{\omega}_j)$ and $(U_{p,q}^ay)_j=\psi((U_{p,q}^ay^{q/p})_j)$. Indeed, because $\psi$ is nondecreasing, it preserves the inequalities that hold for $y^{q/p}_j$, $x^{\omega}_j$ and $(U_{p,q}^ay^{q/p})_j$.
\end{proof}

\noindent We now combine Proposition \ref{pigeonholeproposition} and Theorem \ref{confinement} to prove Theorem \ref{regularitytheorem}.

\begin{proof}[{\bf Of Theorem \ref{regularitytheorem}}] Let $x\in\mathcal{B}_{\omega}$ satisfy the requirement of Theorem \ref{regularitytheorem} and let $a=a(p,q,\omega,i_2-i_1)$. By Theorem \ref{confinement} there is a $y\in \mathcal{B}_{p,q}$ so that
\begin{align}\label{conf}
y_j \leq x_j \leq (U_{p,q}^ay)_j \ \mbox{for all} \ i_1\leq j \leq i_2\ .
\end{align}
In particular, when $i_1 \leq j+mp, j+np\leq i_2$, then
$$y_j = y_{j+mp}-mq \leq x_{j+mp}-mq \leq (U_{p,q}^ay)_{j+mp}-mq =  (U_{p,q}^ay)_j $$
and similarly with $m$ replaced by $n$, so that
\begin{align}\label{recurrenceestimate}
|x_{j+mp}-x_{j+np}-(m-n)q| \leq |(U_{p,q}^ay)_j - y_j| \ \mbox{when}\ i_1 \leq j+mp, j+np\leq i_2\ .
\end{align}
But according to Proposition \ref{pigeonholeproposition}, there exists an $i_0$ so that 
\begin{align}\label{pige}
\sum_{j=i_0-r}^{i_0+r-1} |(U^a_{p,q}y)_j-y_j| \leq \frac{2r\cdot a}{p}\ .
\end{align}
When $i_1+r\leq i_0+np, i_0+mp \leq i_2-r+1$, then we can sum (\ref{recurrenceestimate}) from $j=i_0-r$ to $j=i_0+r-1$ and use estimate (\ref{pige}) to obtain (\ref{pigeintheorem}).
\end{proof}

\section{Destroying periodic foliations}\label{periodicdestructionsection}
In this section, we will prove that periodic minimal foliations of (\ref{recrel}) can be destroyed by an arbitrarily small smooth perturbation of the local potentials. This result is well-known and also contained in \cite{Matherdestruction}. We nevertheless provide it here, both for completeness and because along the way we will derive some estimates that are necessary later, for the study of irrational foliations.
\\
\indent It is tempting to think that it is completely obvious that a ``generic'' periodic action $W_{p}:\X_{p,q}\to\R$ does not support a minimal foliation. This is because the collection of Morse functions $f:\X_{p,q}\to \R$ on each finite-dimensional space $\X_{p,q}$ is open and dense in the $C^k$-topology for any $k\geq 2$ and because a Morse function only possesses isolated stationary points \cite{Hirsch},  \cite{matsumoto}. Nevertheless, one should note that by far not every Morse function on $\X_{p,q}$ is the periodic action of a variational recurrence relation, i.e. not every Morse function is the sum of finite-range local potentials that satisfy conditions {\bf A}-{\bf E}. For this reason, we do not use Morse theory in this section.
\\ \indent 
For what follows it is helpful to define, for a Birkhoff sequence $x$, the set 
$$\Sigma_{x}:=\{ x_{k}+l \ | \ k,l \in \Z\} \subset \R\ .$$ 
In the context of twist maps, $\Sigma_x$ is sometimes referred to as the {\it extended orbit} of $x$. Obviously, $\Sigma_x$ is invariant under the integer shift $\xi\mapsto \xi+1$. \\
\indent In case that $\Sigma_x$ is not a dense subset of $\R$, it admits a nonempty complementary interval $(\xi_-, \xi_+)$ with $\Sigma_{x} \cap (\xi_-, \xi_+) = \emptyset$. Such an interval is sometimes called a {\it gap}.  These gaps will be important when we perturb the local potentials. \\
\indent Not all Birkhoff sequences have an extended orbit that admits a gap. But when $x\in \mathcal{B}_{p,q}$ is periodic, then $\Sigma_x$ is discrete and therefore it certainly has gaps. More precisely, when $x\in\mathcal{B}_{p,q}$ then it is clear that the set $\Sigma_{x}\cap [0,1)$ has a cardinality less than or equal to $p$. By the pigeonhole principle, this means that there exists at least one complementary interval to $\Sigma_{x}$ of length at least $\frac{1}{p}$. We will see later that in certain situations, much larger gaps may even exist.
\\
\indent In Theorem \ref{destructionperiodic} below, these gaps will act as the support of a small periodic ``bumpy'' perturbation. In the following simple lemma, we establish the existence of such {\it periodic bump functions} and measure their smoothness. We omit the proof.
\begin{lemma}\label{phi}
 For every $k \in \N$ there exists a number $0<C_k<\infty$ such that for all real numbers $
 \xi_-$ and $\xi_+$ with $\xi_- < \xi_+ <\xi_- +1$ and every $\varepsilon>0$, there exists a $C^{\infty}$ function $\phi:\R\to \R$ that satisfies
 \begin{itemize}
\item $||\phi||_{C^k}:=\max_{0\leq n\leq k} \sup_{\xi\in \R}\left|\frac{d^n \phi(\xi)}{d \xi^n} \right| \leq \varepsilon$.
\item $\phi(\xi + 1) = \phi(\xi)$ for all $\xi\in \R$.
\item $\phi(\xi) = 0$ for $\xi_+ \leq \xi \leq \xi_- + 1$.
\item $\phi(\xi)>0$ for $\xi_- < \xi < \xi_+$.
\item $\phi(\xi) = \frac{\varepsilon (\xi_+-\xi_-)^k}{C_k}$ for $\xi_-  + \frac{\xi_+-\xi_-}{4}\leq \xi \leq \xi_{+} - \frac{\xi_+ - \xi_-}{4}$.
\end{itemize}
 \end{lemma}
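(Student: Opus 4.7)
The proof is a straightforward construction by rescaling a fixed $C^\infty$ bump function and then extending periodically, so I would proceed as follows.

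First I would fix, once and for all (independently of $\xi_\pm$ and $\varepsilon$), a template function $\psi_0 \in C^\infty(\R)$ with $\mathrm{supp}\,\psi_0 \subset [0,1]$, $\psi_0(\xi) > 0$ for $\xi \in (0,1)$, and $\psi_0(\xi) = 1$ for $\xi \in [1/4, 3/4]$. Such a $\psi_0$ exists by standard mollifier constructions, and it is crucial that $\psi_0$ vanishes to infinite order at the endpoints $0$ and $1$, so that extending by zero yields a $C^\infty$ periodic function after step three. Set $C_k := \max_{0\leq n\leq k} \sup_{\xi\in\R} |\psi_0^{(n)}(\xi)|$, which is finite and depends only on $k$.

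Next, given $\xi_- < \xi_+ < \xi_- + 1$ and $\varepsilon > 0$, write $L := \xi_+ - \xi_- \in (0,1)$ and define $\phi$ on the fundamental domain $[\xi_-, \xi_- + 1]$ by
\begin{equation*}
\phi(\xi) := \frac{\varepsilon L^k}{C_k}\, \psi_0\!\left(\frac{\xi - \xi_-}{L}\right) \quad \text{for } \xi \in [\xi_-, \xi_+],
\end{equation*}
and $\phi(\xi) := 0$ for $\xi \in [\xi_+, \xi_- + 1]$. Then extend $\phi$ to all of $\R$ by the rule $\phi(\xi + 1) := \phi(\xi)$. Because $\psi_0$ is flat to infinite order at both endpoints of $[0,1]$, the two pieces glue smoothly and the periodic extension is $C^\infty$. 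Properties two, three and four are immediate from the construction, and property five follows because $\psi_0 \equiv 1$ on $[1/4, 3/4]$ forces $\phi \equiv \varepsilon L^k / C_k$ on $[\xi_- + L/4,\, \xi_+ - L/4]$.

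Finally I would verify the $C^k$ bound. For $0 \leq n \leq k$ and $\xi \in [\xi_-, \xi_+]$, the chain rule gives
\begin{equation*}
\phi^{(n)}(\xi) = \frac{\varepsilon L^{k-n}}{C_k}\, \psi_0^{(n)}\!\left(\frac{\xi - \xi_-}{L}\right),
\end{equation*}
so $|\phi^{(n)}(\xi)| \leq \varepsilon L^{k-n}\, C_k^{-1} \cdot C_k = \varepsilon L^{k-n} \leq \varepsilon$, where in the last step I use $L < 1$ and $k - n \geq 0$. Outside $[\xi_-, \xi_+]$ (modulo $\Z$) all derivatives of $\phi$ vanish, so $\|\phi\|_{C^k} \leq \varepsilon$ as required. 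There is no real obstacle here; the only mildly delicate point is the precise scaling $\varepsilon L^k$ in the amplitude, which is exactly what is needed to absorb the $L^{-n}$ produced by differentiating the rescaled argument, and this is also what dictates the form of the plateau value in property five.
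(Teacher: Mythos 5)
Your proof is correct and complete. The paper explicitly omits the proof of this lemma (``We omit the proof.''), so there is no argument in the text to compare against; your construction --- rescale a fixed $C^\infty$ plateau bump $\psi_0$ supported in $[0,1]$ by $\xi \mapsto (\xi-\xi_-)/L$ and multiply by the amplitude $\varepsilon L^k/C_k$, then extend periodically --- is precisely the standard approach one would expect the authors to have in mind. The one point worth stating slightly more carefully is that, since $\psi_0\in C^\infty(\R)$ has $\mathrm{supp}\,\psi_0\subset[0,1]$, all of its derivatives vanish identically on $\R\setminus(0,1)$ and hence at $0$ and $1$ by continuity, which is exactly why the two pieces glue to a $C^\infty$ $1$-periodic function; your phrasing ``vanishes to infinite order'' is this fact, and it is automatic from the support condition rather than an extra hypothesis on $\psi_0$. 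The scaling argument $|\phi^{(n)}|\le \varepsilon L^{k-n}\le\varepsilon$ for $0\le n\le k$ and $L\in(0,1)$ is exactly right, and it also explains why the plateau height in the lemma must scale like $(\xi_+-\xi_-)^k$: this is the largest amplitude compatible with the $C^k$ bound when the bump is squeezed into a window of width $L$.
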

For a given collection of local potentials $S_j$ and a given periodic minimizer $y^{\rm min}\in \mathcal{B}_{p,q}$ for these potentials, let us assume that $(\xi_-, \xi_+)$ is a gap in $\Sigma_{y^{\rm min}}$. Then we let $\phi:\R\to \R$ be the smooth periodic bump function satisfying the conclusions of Lemma \ref{phi} above and we define the local potentials $S_{j}^{\varepsilon}$ by
\begin{align}\label{sepsilondef}
S^{\varepsilon}_j(x):=S_j(x) + \phi(x_j)\ .
\end{align}
\noindent It is easy to check that the $S_j^{\varepsilon}$ satisfy conditions {\bf A}-{\bf E} of Section \ref{problemsetup} when the $S_j$ do. Moreover, $S_j^{\varepsilon}$ is a small perturbation of $S_j$ in the sense that
$$||S_j^{\varepsilon}- S_j||_{C^k} := \max_{0\leq n\leq k} \ \max_{i_1, \ldots, i_n\in \Z} \ \sup_{x\in \R^{\Z}} \left| \p_{i_1, \ldots, i_n}S_j(x)\right| = \max_{0\leq n\leq k} \sup_{\xi\in \R}\left|\frac{d^n\phi(\xi)}{d \xi^n} \right| = ||\phi||_{C^k} \leq \varepsilon\ .$$ 
Most importantly, these $S_j^{\varepsilon}$ do not admit a periodic minimal foliation. This is the content of Theorem \ref{destructionperiodic}:

\begin{theorem}\label{destructionperiodic}
Assume that the $S_j$ are local potentials that satisfy conditions {\bf A}-{\bf E} of Section \ref{problemsetup} and let $\varepsilon>0$ be a perturbation parameter, $k \in \N_{\geq 2}$ a differentiability degree and $(p, q) \in \N\times \Z$ integers. \\
\indent Moreover, let $y^{\rm min}\in \mathcal{B}_{p,q}$ be a periodic minimizer for the local potentials $S_j$ and let $(\xi_-, \xi_+)\subset \R$ be a nonempty maximal complementary interval to $\Sigma_{y^{\rm min}}$, that is 
$$\xi_-, \xi_+\in \Sigma_{y^{\rm min}} \ \mbox{and} \ (\xi_-, \xi_+)\cap \Sigma_{y^{\rm min}} = \emptyset\ .$$ 
Recall that such a complementary interval always exists. Let the local potentials $S^{\varepsilon}_j$ be defined as in (\ref{sepsilondef}), where $\phi=\phi(\xi)$ obeys the conclusions of Lemma \ref{phi}. Then the $S_j^{\varepsilon}$ satisfy conditions {\bf A}-{\bf E} and the estimate $$||S_j^{\varepsilon}- S_j||_{C^k}\leq \varepsilon\ .$$ 
Moreover, 
the following are true.
\begin{itemize}
\item[{\bf 1.}] The periodic minimizer $y^{\rm min}$ of the unperturbed local potentials $S_j$ is also a minimizer of the perturbed local potentials $S_j^{\varepsilon}$. Moreover, when $y\in \X_{p,q}$ is a periodic minimizer of the $S_j^{\varepsilon}$, then $y=\tau_{k,l}(y^{\rm min})$ for some integers $k,l$. 
\item[{\bf 2.}] Let us define, for $M\in \N$, the function 
$$W_{Mp}^{\varepsilon}:\X_{Mp, Mq}\to\R\ \mbox{by}\ W^{\varepsilon}_{Mp}(x):=\sum_{j=1}^{Mp} S_j^{\varepsilon}(x)\ .$$
When $x\in \X_{Mp,Mq}$ satisfies
$$\xi_- +\frac{(\xi_+-\xi_-)}{4} \leq x_{k_i} + l_i  \leq  \xi_+ - \frac{(\xi_+-\xi_-)}{4}$$ 
for certain integers $0\leq k_1 < k_2 < \ldots < k_ N < Mp$ and $l_1, l_2, \ldots, l_N \in \Z$, then
$$W_{Mp}^{\varepsilon}(x)-W_{Mp}^{\varepsilon}(y^{\rm min}) \geq \frac{N \varepsilon  (\xi_+-\xi_-)^k}{C_k}\ .$$
\end{itemize}  
\end{theorem}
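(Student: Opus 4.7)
The plan has three parts: verifying that $S_j^\varepsilon$ inherits the setup hypotheses, proving item \textbf{1}, and computing the quantitative bound in item \textbf{2}.

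First, I would check that $S_j^\varepsilon(x) = S_j(x)+\phi(x_j)$ still satisfies \textbf{A}--\textbf{E} and obeys $\|S_j^\varepsilon-S_j\|_{C^k}\leq\varepsilon$. Since $\phi$ is smooth, $1$-periodic, nonnegative, and depends on only one coordinate, the finite-range property \textbf{A}, shift-invariance \textbf{B}, coercivity \textbf{C}, the twist condition \textbf{D} (mixed partials are untouched because $\phi$ depends on a single variable), and bounded derivatives \textbf{E} all carry over; the $C^k$ estimate reduces to $\|\phi\|_{C^k}\leq\varepsilon$ from Lemma \ref{phi}.

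The driving observation behind item \textbf{1} is that $\phi$ vanishes on $\Sigma_{y^{\rm min}}$: by construction $\phi$ is $1$-periodic and supported in $\bigcup_{l\in\Z}[\xi_-+l,\xi_++l]$, while by maximality every element of $\Sigma_{y^{\rm min}}$ lies outside $\bigcup_l(\xi_-+l,\xi_++l)$. Hence $\phi(y^{\rm min}_j)=0$ and $W_p^\varepsilon(y^{\rm min})=W_p(y^{\rm min})$. For any $x\in\X_{p,q}$,
\begin{align*}
W_p^\varepsilon(x)-W_p^\varepsilon(y^{\rm min}) = \bigl(W_p(x)-W_p(y^{\rm min})\bigr) + \sum_{j=1}^p \phi(x_j)\geq 0,
\end{align*}
a sum of two nonnegative terms by Proposition \ref{periodicminimizersproposition} and $\phi\geq 0$. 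So $y^{\rm min}$ minimizes $W_p^\varepsilon$. For the rigidity half, let $y\in\X_{p,q}$ be a $(p,q)$-minimizer of $S_j^\varepsilon$. Equality in the display forces both nonnegative summands to vanish, so $y$ is a $(p,q)$-minimizer of the original $S_j$ and every $y_j$ avoids every translate $(\xi_-+l,\xi_++l)$. Suppose for contradiction $y$ is not a shift of $y^{\rm min}$. By the strict ordering in Proposition \ref{periodicminimizersproposition}, $y$ lies strictly between two consecutive shifts $y^A=\tau_{k_1,l_1}(y^{\rm min})\ll y\ll \tau_{k_2,l_2}(y^{\rm min})=y^B$. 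At every coordinate $j$ the values $y^A_j=y^{\rm min}_{j-k_1}+l_1$ and $y^B_j=y^{\rm min}_{j-k_2}+l_2$ are \emph{consecutive} in $\Sigma_{y^{\rm min}}$: a third element $z=y^{\rm min}_m+n$ strictly between them would correspond to a shift $\tau_{j-m,n}(y^{\rm min})$ with $j$-th coordinate equal to $z$, hence (by strict ordering) strictly between $y^A$ and $y^B$, contradicting consecutiveness. As $j$ ranges over $\{0,\dots,p-1\}$ the residue $y^{\rm min}_{j-k_1}\bmod 1$ cycles through every element of $\Sigma_{y^{\rm min}}\bmod 1$, so the bracketing gap $(y^A_j,y^B_j)$ cycles through every gap of $\Sigma_{y^{\rm min}}\bmod 1$; in particular it equals a translate of $(\xi_-,\xi_+)$ for some $j$. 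For that $j$, $y_j$ would lie in that translate, contradicting $\phi(y_j)=0$. Hence $y$ is a shift of $y^{\rm min}$.

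Item \textbf{2} is then a short computation. Since $y^{\rm min}$ is a global minimizer, it is also an $(Mp,Mq)$-minimizer of $S_j$ (last clause of Proposition \ref{periodicminimizersproposition}); combined with $\phi(y^{\rm min}_j)=0$ this gives
\begin{align*}
W_{Mp}^\varepsilon(x) - W_{Mp}^\varepsilon(y^{\rm min}) \geq \sum_{j=1}^{Mp}\phi(x_j).
\end{align*}
The hypothesis places each $x_{k_i}+l_i$ inside the plateau $[\xi_-+(\xi_+-\xi_-)/4,\,\xi_+-(\xi_+-\xi_-)/4]$ on which the last bullet of Lemma \ref{phi} gives $\phi\equiv\varepsilon(\xi_+-\xi_-)^k/C_k$; the periodicity $\phi(\xi+1)=\phi(\xi)$ transfers this value to $\phi(x_{k_i})$. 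Summing the $N$ such terms and discarding the remaining nonnegative contributions yields the claimed lower bound $N\varepsilon(\xi_+-\xi_-)^k/C_k$.

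I expect the subtlest step to be the cycling argument in the rigidity half of item \textbf{1}: recognizing that as $j$ varies from $0$ to $p-1$, the gap bracketing $y_j$ between two consecutive shifts of $y^{\rm min}$ sweeps out every gap of $\Sigma_{y^{\rm min}}\bmod 1$, and therefore inevitably lands on the chosen translate of $(\xi_-,\xi_+)$. Everything else is bookkeeping built on the vanishing of $\phi$ on $\Sigma_{y^{\rm min}}$ and the variational properties of periodic minimizers.
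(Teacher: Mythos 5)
Your proof is correct and closely tracks the paper's structure: the verification of \textbf{A}--\textbf{E} and the $C^k$ bound, the decomposition $W_p^{\varepsilon}(x)-W_p^{\varepsilon}(y^{\rm min}) = (W_p(x)-W_p(y^{\rm min})) + \sum_j\phi(x_j)$, and the computation for part \textbf{2} all coincide with the paper. The one place where you take a genuinely different route is the rigidity half of part \textbf{1}. The paper argues that if $y_0\notin\Sigma_{y^{\rm min}}$ then $y$ is squeezed between two consecutive translates $U_{p,q}^a y^{\rm min}\ll y\ll U_{p,q}^{a+1}y^{\rm min}$, writes $\xi_\pm$ as $0$-th coordinates of $U_{p,q}^{b}y^{\rm min}$ and $U_{p,q}^{b+1}y^{\rm min}$, and then applies $U_{p,q}^{b-a}$ to exhibit the explicit coordinate $y_{(a-b)s}+(b-a)t\in(\xi_-,\xi_+)$. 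You instead observe that the vanishing of both nonnegative summands forces $y$ to be a $(p,q)$-minimizer of the unperturbed $S_j$ with all $y_j$ outside every translate of $(\xi_-,\xi_+)$, bracket $y$ between consecutive shifts $y^A\ll y\ll y^B$, prove that $(y^A_j,y^B_j)$ is a gap of $\Sigma_{y^{\rm min}}$ for each $j$, and then run a pigeonhole argument: as $j$ ranges over one period, the left endpoints $y^A_j\bmod 1$ sweep through all of $\Sigma_{y^{\rm min}}\bmod 1$, so some bracketing gap must be a translate of $(\xi_-,\xi_+)$. Both arguments are valid; the paper's pins down the bad coordinate explicitly via the B\'ezout relation $pt-qs=1$, while yours is slightly less explicit but avoids invoking $U_{p,q}$ and works without first reducing $(p,q)$ to coprime form. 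This is a reasonable variant with no loss of rigor.
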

\begin{proof}
We have already seen that the $S_j^{\varepsilon}$ satisfy conditions {\bf A}-{\bf E} and that $||S_{j}^{\varepsilon}-S_j||_{C^k}\leq \varepsilon$. \\
\indent Let us define the function $W_p^{\varepsilon}:\X_{p,q}\to \R$ by $W_p^{\varepsilon}(x):=\sum_{j=1}^pS_j^{\varepsilon}(x)$. 
Then it holds for every $x\in \X_{p,q}$ that 
$$W_{p}^{\varepsilon}(x) - W_{p}^{\varepsilon}(y^{\rm min}) = W^0_{p}(x)-W^0_{p}(y^{\rm min}) + \sum_{j=1}^{p} \left( \phi(x_j)-\phi(y^{\rm min}_j) \right) \geq 0\ .$$
The inequality above holds because by assumption, $y^{\rm min}$ is a minimizer of $W_{p}=W_{p}^0$ on $\X_{p,q}$ and because by construction of the function $\phi$ it holds that $\phi(y^{\rm min}_j)=0$ for all $j$ and $\phi(x_j) \geq 0$ for all $j$. This proves the first conclusion in part {\bf 1} of the theorem, i.e. that $y^{\rm min}$ is a $(p, q)$-periodic minimizer of the local potentials $S_j^{\varepsilon}$.\\ 
\indent To prove the second conclusion in part {\bf 1} of the theorem, we let $y\in \mathcal{B}_{p,q}$ be a periodic minimizer. We then consider two possibilities. When $y_0\in \Sigma_{y^{\rm min}}$, then $y_0=y^{\rm min}_k+l$ for some $k,l\in \Z$. Because the collection of $(p,q)$-minimizers is strictly ordered, this means that $y=\tau_{-k,l}y^{\rm min}$, that is $y$ is a translate of $y^{\rm min}$. \\
\indent The other possibility is that $y_0\notin \Sigma_{y^{\rm min}}$. In this case, $y$ is not a translate of $y^{\rm min}$, and hence $U^a_{p,q}y^{\rm min}\ll y\ll U^{a+1}_{p,q}y^{\rm min}$ for some $a\in\Z$. On the other hand, because $\xi_-$ and $\xi_+$ are ``consecutive'' elements of $\Sigma_{y^{\rm min}}$, it holds that $\xi_-=\left(U_{p,q}^{b}y^{\rm min}\right)_0$ and $\xi_+= \left(U_{p,q}^{b+1}y^{\rm min}\right)_0$ for some $b\in \Z$. In particular, $U^{b}_{p,q}y^{\rm min}\ll U_{p,q}^{b-a}y \ll U^{b+1}_{p,q}y^{\rm min}$ and hence, recalling that $U_{p,q}=\tau_{s,t}$, 
$$\xi_-  < y_{(a-b)s}+(b-a)t < \xi_+\ .$$
But this implies that $y$ can not be a $(p,q)$-minimizer. Indeed, we find that
$$W_{p}^{\varepsilon}(y) - W_{p}^{\varepsilon}(y^{\rm min}) = W^0_{p}(y)-W^0_{p}(y^{\rm min}) + \sum_{j=1}^{p} \left( \phi(y_j)-\phi(y^{\rm min}_j) \right) > 0\ ,$$
because by construction of the function $\phi$ it holds that $\phi(y_{j})>0$ whenever $\xi_- < y_j < \xi_+$.
\\
\indent By a similar computation, one proves part {\bf 2} of the theorem. More precisely, for any $x \in \X_{Mp,Mq}$ with $\xi_- +\frac{(\xi_+-\xi_-)}{4} \leq x_{k_i} + l_i  \leq  \xi_+ - \frac{(\xi_+-\xi_-)}{4}$ for $i=1,\ldots, N$, we find that
$$W_{Mp}^{\varepsilon}(x) - W_{Mp}^{\varepsilon}(y^{\rm min}) =W^0_{Mp}(x)-W^0_{Mp}(y^{\rm min}) + \sum_{j=1}^{Mp} \left( \phi(x_j)-\phi(y^{\rm min}_j) \right) \geq \frac{N\varepsilon(\xi_+-\xi_-)^k}{C_k}\ .$$
The inequality now holds because by assumption, $y^{\rm min}$ is a $(p,q)$-periodic minimizer of the $S_j$ and hence by Proposition \ref{periodicminimizersproposition} also an $(Mp, Mq)$-periodic minimizer of the $S_j$, and because by construction of the function $\phi$ it holds that $\phi(y^{\rm min}_j)=0$ for all $j$ and $\phi(x_j) \geq 0$ for all $j$ and $\phi(x_{k_i})=\phi(x_{k_i}+l_i) \geq \frac{\varepsilon(\xi_+-\xi_-)^k}{C_k}$ for every $i=1,\ldots, N$. 
\end{proof}
Part {\bf 1} of Theorem \ref{destructionperiodic} says that for the perturbed potentials $S_j^{\varepsilon}$ there is only one $\tau$-orbit of $(p,q)$-periodic minimizers. Every such group orbit is discrete. Therefore, if the unperturbed $S_j$ had supported a continuous family of periodic minimizers, then this family is destroyed after perturbation. \\
\indent Part {\bf 2} of Theorem \ref{destructionperiodic} measures how much an $x\in \X_{Mp, Mq}$ fails to be an $(Mp, Mq)$-periodic minimizer when it takes values in the middle half of the interval $(\xi_-, \xi_+)$ or one of the integer translates of this interval. We will need the estimate of part {\bf 2} in the next section.

\section{Destroying irrational foliations: the first steps}\label{firststepssection}
For a given collection of local potentials $S_j$ that satisfy conditions {\bf A}-{\bf E} of Section \ref{problemsetup} and a given irrational rotation number $\omega$, we will now start the construction of the perturbations $S_j^{\varepsilon}$ that do not admit a minimal foliation of rotation number $\omega$. \\
\indent The candidate perturbations $S_j^{\varepsilon}$ will be constructed in this section, by a procedure similar to that in \cite{Matherdestruction}. After this section, we will deviate from the ideas of \cite{Matherpeierls} and \cite{Matherdestruction}.\\
\indent The idea is that, when we are given an irrational rotation number $\omega$, we approximate it by a rational number $\frac{q}{p}$, where $p\in \N$ and $q\in \Z$. Let us assume that 
\begin{align}\label{qpomega}
\frac{q}{p}< \omega < \frac{q+1}{p}\ .
\end{align}
Thus, we assume that $\frac{q}{p}$ approximates $\omega$ from below and that it is the best approximation from below with denominator $p$. We will not assume that $p$ and $q$ are relative prime. In case that $\frac{q-1}{p}< \omega < \frac{q}{p}$, the analysis is similar to the case we consider in detail here. \\
\indent When $y^{\rm min}\in \mathcal{B}_{p,q}$ is a periodic minimizer for the unperturbed potentials $S_{j}$, then the set  
$$\Sigma_{y^{\rm min}}=\{y_k^{\rm min} + l \ | k,l\in \Z\} $$ has a gap of length at least $\frac{1}{p}$.  This  was explained in Section \ref{periodicdestructionsection}. Applying Theorem \ref{destructionperiodic}, we can therefore immediately conclude:
\begin{proposition}\label{firstperturbation}
Let $\omega\in \R \backslash \Q$, let $(p,q)\in \N\times \Z$ satisfy (\ref{qpomega}) and let $y^{\rm min}\in \mathcal{B}_{p,q}$ be a periodic minimizer. \\ \indent Then there are, for every $\varepsilon>0$ and every $k\in \N_{\geq 2}$, perturbations $S_j^{\varepsilon,1}$ of the original potentials $S_j$, satisfying conditions {\bf A}-{\bf E} and the estimate $||S^{\varepsilon,1}_j-S_j||_{C^k}\leq \frac{\varepsilon}{3}$, as well as a number $\xi\in \R$, with the following property. \\ \indent When $x\in \X_{Mp,Mq}$ satisfies 
$\xi \leq x_{k_i}+l_i\leq \xi + \frac{1}{2p}$ for certain $0\leq k_1 < k_2 < \ldots < k_ N < Mp $ and $l_1, l_2, \ldots, l_N \in \Z$, then 
$$W_{Mp}^{\varepsilon,1}(x)-W_{Mp}^{\varepsilon,1}(y^{\rm min})\geq \frac{N\varepsilon}{3C_kp^k}\ .$$
\end{proposition}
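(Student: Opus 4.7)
The plan is to reduce the statement to a direct application of Theorem \ref{destructionperiodic} (with perturbation parameter $\varepsilon/3$), using a pigeonhole lower bound on the size of a complementary interval to $\Sigma_{y^{\rm min}}$.

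First I would observe that since $y^{\rm min}\in \mathcal{B}_{p,q}$, the set $\Sigma_{y^{\rm min}}$ is invariant under $\xi\mapsto \xi+1$ and the cardinality of $\Sigma_{y^{\rm min}}\cap[0,1)$ is at most $p$. By the pigeonhole principle there exist consecutive points $\xi_-,\xi_+\in \Sigma_{y^{\rm min}}$ with
$$\xi_+-\xi_-\geq \frac{1}{p},\qquad (\xi_-,\xi_+)\cap \Sigma_{y^{\rm min}}=\emptyset.$$
This is precisely the setting required by Theorem \ref{destructionperiodic}. I would then apply that theorem with perturbation parameter $\varepsilon/3$ and the bump function $\phi$ provided by Lemma \ref{phi} supported on $(\xi_-,\xi_+)\pmod{\Z}$, defining
$$S_j^{\varepsilon,1}(x):=S_j(x)+\phi(x_j).$$
By construction the $S_j^{\varepsilon,1}$ satisfy conditions {\bf A}--{\bf E} and the estimate $\|S_j^{\varepsilon,1}-S_j\|_{C^k}\leq \varepsilon/3$.

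Next I would choose
$$\xi:=\xi_-+\tfrac{1}{4}(\xi_+-\xi_-).$$
The point of this choice is that the interval $[\xi,\xi+\tfrac{1}{2p}]$ is contained in the middle half $[\xi_-+\tfrac{1}{4}(\xi_+-\xi_-),\xi_+-\tfrac{1}{4}(\xi_+-\xi_-)]$ of the gap: indeed, the middle half has length $\tfrac{1}{2}(\xi_+-\xi_-)\geq\tfrac{1}{2p}$, so adding $\tfrac{1}{2p}$ to the left endpoint $\xi$ keeps us inside the middle half. Consequently, if $x\in \X_{Mp,Mq}$ satisfies $\xi\leq x_{k_i}+l_i\leq \xi+\tfrac{1}{2p}$ for $i=1,\ldots,N$, then each $x_{k_i}+l_i$ lies in the middle half of $(\xi_-,\xi_+)$ as required by part {\bf 2} of Theorem \ref{destructionperiodic}.

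Finally, applying part {\bf 2} of Theorem \ref{destructionperiodic} (with perturbation $\varepsilon/3$) yields
$$W_{Mp}^{\varepsilon,1}(x)-W_{Mp}^{\varepsilon,1}(y^{\rm min})\geq \frac{N\varepsilon(\xi_+-\xi_-)^k}{3C_k}\geq \frac{N\varepsilon}{3C_k p^k},$$
where the last inequality uses the pigeonhole bound $\xi_+-\xi_-\geq 1/p$. This establishes the claim. There is really no hard step here: the proposition is an immediate consequence of the quantitative perturbation result of Theorem \ref{destructionperiodic} combined with the elementary observation that a $(p,q)$-periodic Birkhoff sequence can leave a gap of length at least $1/p$ in its extended orbit. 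The only point to watch is the quantitative matching between the prescribed interval of length $1/(2p)$ and the middle-half region of the gap, which motivated the specific choice of $\xi$ above.
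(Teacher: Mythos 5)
Your proof is correct and follows exactly the route the paper intends: the paper itself states Proposition~\ref{firstperturbation} as an immediate consequence of Theorem~\ref{destructionperiodic} together with the pigeonhole observation that $\Sigma_{y^{\rm min}}$ has a gap of length at least $1/p$. Your explicit choice $\xi = \xi_- + \tfrac14(\xi_+-\xi_-)$ and the verification that $[\xi,\xi+\tfrac{1}{2p}]$ lies in the middle half of the gap supply precisely the quantitative bookkeeping the paper leaves implicit.
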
 
Next, we let $p'\in \{2, 3, \ldots\}$ be the unique integer for which 
\begin{align}\label{defpprime}
\frac{1}{p'p} <\omega - \frac{q}{p} < \frac{1}{(p'-1)p}\ .
\end{align}
Such $p'$ exists, because we assumed that $\omega$ is irrational and that $\frac{q}{p}< \omega < \frac{q+1}{p}$. We will now investigate the periodic minimizers of the perturbed action $W_{p'p}^{\varepsilon,1} = \sum_{j=1}^{p'p}S_j^{\varepsilon,1}$ in the space $\X_{p'p, p'q+1}$. Elements of this space have rotation number $\frac{q}{p}+\frac{1}{p'p}$. Because $\frac{q}{p}<\frac{q}{p}+\frac{1}{p'p}<\omega$, this new rotation number is a better rational approximation of $\omega$ than the rational approximation $\frac{q}{p}$ that we started out with. \\
\indent The main result of this section is the following theorem:
\begin{theorem}\label{sepsilon2}
Let $\omega\in\R \backslash \Q$ and let $(p,q)\in \N\times \Z$ satisfy (\ref{qpomega}). Moreover, let $p'$ be defined by (\ref{defpprime}), assume that $r\leq p'p$ and define $C_{k,r}:=12CC_k(2r+1)^2$. \\
\indent For $\varepsilon>0$ and $k\in \N_{\geq 2}$, we let the $S_j^{\varepsilon,1}$ be as in Proposition \ref{firstperturbation} and we let $x^{\rm min}\in \mathcal{B}_{p'p, p'q+1}$ be any periodic minimizer of the $S_j^{\varepsilon,1}$.\\
\indent Then there is a complementary interval to $\Sigma_{x^{\rm min}}$ of length at least 
$$\varepsilon/C_{k,r}p^{k+1}\ .$$
As a consequence, there exists a further perturbation $S_j^{\varepsilon,2}$ of the $S_j^{\varepsilon,1}$ satisfying conditions {\bf A}-{\bf E} and the estimate $||S_j^{\varepsilon,2}-S_j^{\varepsilon,1}||_{C^k}\leq \frac{\varepsilon}{3}$, as well as an $\eta\in \R$ so that the following holds. \\
\indent First of all, $x^{\rm min}$ is also a minimizer for the local potentials $S_j^{\varepsilon, 2}$. Secondly, for every $x\in \X_{p'p, p'q+1}$ with 
$$\eta\leq x_{0} \leq \eta+  \varepsilon/2C_{k,r}p^{k+1}\ ,$$ 
it holds that
\begin{align}\label{idontknowanymorewhattowritehere}
W_{p'p}^{\varepsilon,2}(x)-W_{p'p}^{\varepsilon,2}(x^{\rm min}) \geq  \frac{\varepsilon}{3C_k}\left(\varepsilon/C_{k,r}p^{k+1}\right)^k\ .
\end{align}
\end{theorem}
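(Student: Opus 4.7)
My plan splits the theorem into two parts: producing a complementary interval of $\Sigma_{x^{\rm min}}$ of length at least $G := \varepsilon/(C_{k,r} p^{k+1})$, and placing a $C^k$-bump of size $G$ inside this gap to build $S_j^{\varepsilon,2}$. The second part is a direct application of Lemma \ref{phi}: given a maximal complementary interval $(\eta_0, \eta_0 + G_0)$ with $G_0 \geq G$, I set $\eta := \eta_0 + G/4$ and take a bump $\phi_2$ supported on $(\eta_0, \eta_0 + G) \subset (\eta_0, \eta_0 + G_0)$ with $||\phi_2||_{C^k} \leq \varepsilon/3$ and plateau value $\varepsilon G^k/(3C_k)$ on the middle half $[\eta, \eta + G/2]$. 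Define $S_j^{\varepsilon,2}(x) := S_j^{\varepsilon,1}(x) + \phi_2(x_j)$; conditions {\bf A}--{\bf E} and the $C^k$-estimate hold automatically. Since $\phi_2 \geq 0$ and vanishes on $\Sigma_{x^{\rm min}}$, $x^{\rm min}$ remains a $(p'p, p'q+1)$-minimizer of $W^{\varepsilon,2}_{p'p}$, so for $x \in \X_{p'p, p'q+1}$ with $x_0 \in [\eta, \eta + G/2]$ one has
\[
W^{\varepsilon,2}_{p'p}(x) - W^{\varepsilon,2}_{p'p}(x^{\rm min}) \geq [W^{\varepsilon,1}_{p'p}(x) - W^{\varepsilon,1}_{p'p}(x^{\rm min})] + \phi_2(x_{p'p}) \geq \frac{\varepsilon G^k}{3C_k},
\]
where $\phi_2(x_{p'p}) = \phi_2(x_0)$ by $1$-periodicity of $\phi_2$, matching the required bound.

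The substantive task is the gap estimate, which I would prove by contradiction. Assume every complementary interval of $\Sigma_{x^{\rm min}}$ has length strictly less than $G$. Since $\Sigma_{x^{\rm min}}/\Z$ consists of exactly $p'p$ distinct points (by the Birkhoff property of $x^{\rm min} \in \mathcal{B}_{p'p,p'q+1}$), and the bump interval $(\xi_-, \xi_+)$ from Proposition \ref{firstperturbation} has length $\geq 1/p$, the density hypothesis forces at least $N \geq 1/(2pG) = C_{k,r} p^k/(2\varepsilon)$ indices $j \in \{1, \ldots, p'p\}$ at which $x^{\rm min}_j \!\!\mod 1$ lies in the central plateau of $\phi$. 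The plateau value $\geq \varepsilon/(C_k p^k)$ then yields
\[
\sum_{j=1}^{p'p} \phi(x^{\rm min}_j) \,\geq\, \frac{N \varepsilon}{C_k p^k} \,\geq\, \frac{C_{k,r}}{2 C_k} \,=\, 6 C (2r+1)^2.
\]

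To contradict this lower bound, I construct a test competitor $z \in \X_{p'p, p'q+1}$ by adding to $y^{\rm min}$ a smooth ``step'' $\chi:\Z\to\R$ of total height $1$ supported on a window of $O(2r+1)$ consecutive indices and satisfying $\chi(i+p'p) = \chi(i) + 1$; then $z := y^{\rm min} + \chi$ has the correct periodicity. Condition {\bf E} combined with $||\phi||_{C^k} \leq \varepsilon/3$ gives $W^{\varepsilon,1}_{p'p}(z) - W^{\varepsilon,1}_{p'p}(y^{\rm min}) \leq 3(2r+1)^2 C + O((2r+1)\varepsilon)$, which is $\leq 4 C(2r+1)^2$ for small $\varepsilon$. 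Comparing $W^{\varepsilon,1}_{p'p}(x^{\rm min})$ to $W^{\varepsilon,1}_{p'p}(y^{\rm min})$ requires a splice: construct a finite-support modification that equals $x^{\rm min}$ on the interior of $\{1,\ldots,p'p\}$ and $y^{\rm min}$ outside, with transitions of length $O(r)$ at each endpoint, where Theorem \ref{confinement} bounds $||x^{\rm min} - y^{\rm min}||_\infty$ on the splice window and global minimality of $y^{\rm min}$ (Proposition \ref{periodicminimizersproposition}) controls the base-action difference, giving $\sum_{j=1}^{p'p} S_j(x^{\rm min}) \geq \sum_{j=1}^{p'p} S_j(y^{\rm min}) - O(C(2r+1)^2)$. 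Chained with minimality of $x^{\rm min}$ for $W^{\varepsilon,1}_{p'p}$, this produces $\sum \phi(x^{\rm min}_j) \leq 5 C (2r+1)^2$, contradicting the $6C(2r+1)^2$ lower bound. The specific constant $C_{k,r} = 12 C C_k (2r+1)^2$ is chosen precisely so that this inequality closes.

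The main obstacle is the splicing comparison. Since $x^{\rm min}$ and $y^{\rm min}$ have different rotation numbers, they drift apart linearly, so one must use Theorem \ref{confinement} (itself a consequence of Theorem \ref{regularitytheorem}) to bound $||x^{\rm min} - y^{\rm min}||_\infty$ on the splice region --- this is where the connection to near-periodicity enters the proof. Tightly tracking the $(2r+1)^2$ factor through the bounded-derivative condition {\bf E}, first in the competitor $z$ and then in the splice transitions, and ensuring all error terms close cleanly against the density lower bound, is the delicate combinatorial heart of the argument.
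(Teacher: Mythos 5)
Your overall strategy --- bound the number $N$ of plateau visits of $x^{\rm min}$ by an action comparison, extract a gap of $\Sigma_{x^{\rm min}}$ by the pigeonhole principle, and place a second bump in the gap --- is the paper's, and your second step (building $S_j^{\varepsilon,2}$ from the gap and deriving (\ref{idontknowanymorewhattowritehere}) via Theorem~\ref{destructionperiodic}) is sound. The gap-existence argument, however, does not close as written, for a concrete quantitative reason: since $||S_j^{\varepsilon,1}-S_j||_{C^k}\leq\varepsilon/3$, the bump $\phi$ defining $S_j^{\varepsilon,1}$ has $||\phi||_{C^k}\leq\varepsilon/3$, so by Lemma~\ref{phi} its plateau height is $\frac{(\varepsilon/3)(\xi_+-\xi_-)^k}{C_k}\geq\frac{\varepsilon}{3C_kp^k}$, a factor of $3$ smaller than the $\frac{\varepsilon}{C_kp^k}$ you use (and consistent with the $3C_k$ already present in Proposition~\ref{firstperturbation}). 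With the correct height your lower bound becomes $\sum_j\phi(x^{\rm min}_j)\geq\frac{C_{k,r}}{6C_k}=2C(2r+1)^2$, which does \emph{not} contradict your claimed upper bound $\leq 5C(2r+1)^2$. The constant $C_{k,r}=12CC_k(2r+1)^2$ leaves only the razor-thin slack $4r(2r+1)<2(2r+1)^2$, so your looser competitor estimates ($\approx 4C(2r+1)^2$ for the step and $\approx C(2r+1)^2$ for the splice) consume it.

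The route to the correct upper bound is also subtly off. Theorem~\ref{confinement} produces, for a given maximally periodic Birkhoff $x$, a confining $y\in\mathcal{B}_{p,q}$ of its own choosing; it does not relate two prescribed sequences, so it cannot be used to bound $||x^{\rm min}-y^{\rm min}||_\infty$ on the splice window. The paper avoids comparing $x^{\rm min}$ and $y^{\rm min}$ pointwise altogether: it introduces $\tilde x^{\rm min}\in\X_{p'p,p'q}$ and $\tilde y^{\rm min}\in\X_{p'p,p'q+1}$, obtained from $x^{\rm min}$ (resp.\ $y^{\rm min}$) by subtracting (resp.\ adding) $n$ on the $n$-th period window. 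Each agrees with its parent on $\{0,\ldots,p'p-1\}$ and differs by exactly $0$ or $1$ elsewhere, giving $l_1$-distance exactly $2r$ on $[-r,p'p+r-1]$, so Proposition~\ref{lipschitzcontinuity} yields both splice errors $\leq 2r(2r+1)C$. Chaining $\frac{N\varepsilon}{3C_kp^k}\leq W^{\varepsilon,1}_{p'p}(\tilde x^{\rm min})-W^{\varepsilon,1}_{p'p}(y^{\rm min})$ through $x^{\rm min}$ and $\tilde y^{\rm min}$, with minimality of $x^{\rm min}$ in $\X_{p'p,p'q+1}$ killing the middle term, gives $N\leq 12CC_kr(2r+1)p^k/\varepsilon$, which makes the pigeonhole close. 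Your smoothed step also introduces a spurious $O((2r+1)\varepsilon)$ term from $\sum\phi(z_j)$ that an integer step avoids, since $\phi(y^{\rm min}_j+1)=\phi(y^{\rm min}_j)=0$. Replace your step and splice competitors with $\tilde y^{\rm min}$ and $\tilde x^{\rm min}$, fix the plateau constant, and your argument becomes the paper's.
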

We will later formulate conditions on $\omega$ and $\frac{q}{p}$ under which estimate (\ref{idontknowanymorewhattowritehere}) is ``unexpectedly strong''. This will be the main point of Theorem \ref{sepsilon2}. \\
\indent
For the proof of Theorem \ref{sepsilon2}, we need a quantitative continuity result that we formulate separately here. In fact, this is the first time we use condition {\bf E} of Section \ref{problemsetup}.
\begin{proposition}[Lipschitz continuity]\label{lipschitzcontinuity} Let $i_1\leq i_2$ be integers and define 
$$W_{[i_1,i_2]}:\R^{\Z}\to \R \ \mbox{by} \ W_{[i_1,i_2]}(x):= \sum_{j=i_1}^{i_2}S_j(x)\ .$$ 
Then $W_{[i_1, i_2]}$ is $l_1$-Lipschitz continuous:
$$|W_{[i_1, i_2]}(x)-W_{[i_1, i_2]}(y)|\leq C(2r+1)||x-y||_{l_1[i_1-r, i_2+r]}\ .$$
Here,
$$||x-y||_{l_1[i_1-r, i_2+r]}:= \! \sum_{j=i_1-r}^{i_2+r} \! |x_j - y_j|\ .$$
\end{proposition}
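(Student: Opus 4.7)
My plan is to prove the estimate by a straightforward application of the mean value theorem combined with the finite-range property and the bound on first derivatives. The key ingredients are condition \textbf{A} (each $S_j$ depends only on $x_{j-r},\ldots,x_{j+r}$) and condition \textbf{E} (the uniform bound $|\partial_i S_j(x)|\leq C$).

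The first step is to rewrite $W_{[i_1,i_2]}(x)-W_{[i_1,i_2]}(y)$ as a telescoping sum of differences $S_j(x)-S_j(y)$, and then, for each $j\in [i_1,i_2]$, apply the fundamental theorem of calculus along the straight-line path $t\mapsto y+t(x-y)$ to obtain
\begin{equation*}
S_j(x)-S_j(y) \;=\; \int_0^1 \sum_{i\in\Z} \partial_i S_j\bigl(y+t(x-y)\bigr)\,(x_i-y_i)\,dt.
\end{equation*}
By condition \textbf{A}, the sum over $i$ is really only over $i\in\{j-r,\ldots,j+r\}$, and by condition \textbf{E} each partial derivative is bounded by $C$ in absolute value. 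Hence
\begin{equation*}
|S_j(x)-S_j(y)| \;\leq\; C \sum_{i=j-r}^{j+r} |x_i-y_i|.
\end{equation*}

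The second step is to sum these inequalities over $j\in[i_1,i_2]$ and swap the order of summation. For each fixed index $i$, the set of $j\in[i_1,i_2]$ with $j-r\leq i\leq j+r$ has cardinality at most $2r+1$, and the indices $i$ that appear at all lie in $[i_1-r,i_2+r]$. Therefore
\begin{equation*}
|W_{[i_1,i_2]}(x)-W_{[i_1,i_2]}(y)| \;\leq\; C\sum_{j=i_1}^{i_2}\sum_{i=j-r}^{j+r}|x_i-y_i| \;\leq\; C(2r+1)\sum_{i=i_1-r}^{i_2+r}|x_i-y_i|,
\end{equation*}
which is exactly the desired bound.

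There is no real obstacle here: the argument is a routine combination of the mean value theorem with the bookkeeping of how many local potentials $S_j$ depend on a given coordinate $x_i$. The only point that requires a moment of attention is verifying that the interchange of summation works cleanly on the boundary, i.e.\ that the indices $i$ outside $[i_1-r, i_2+r]$ genuinely do not contribute, which follows immediately from condition \textbf{A}.
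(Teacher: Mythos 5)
Your proof is correct and follows exactly the same route as the paper: the telescoping bound, the interpolation along the straight-line path via the fundamental theorem of calculus, the use of conditions \textbf{A} and \textbf{E} to truncate and bound the partial derivatives, and the final change of order of summation are all identical to the paper's argument. Nothing to add.
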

\begin{proof} We use interpolation:
\begin{align}\nonumber
& |W_{[i_1,i_2]}(x)-W_{[i_1,i_2]}(y)| \leq \sum_{j=i_1}^{i_2} \left| S_j(x)-S_j(y) \right| = 
\sum_{j=i_1}^{i_2} \left| \int_0^1\frac{d}{d\tau} S_j(\tau x+(1-\tau)y)d\tau \right| \\ \nonumber  & \leq 
\sum_{j=i_1}^{i_2} \sum_{|k-j|\leq r} \int_0^1 \left| \p_kS_j(\tau x+(1-\tau)y)\right|d\tau \cdot |x_k-y_k| \leq C(2r+1)\sum_{k=i_1-r}^{i_2+r} |x_k-y_k|\ .
\end{align}
The final inequality follows from changing the order of summation. 
\end{proof}
Now we are ready to prove Theorem \ref{sepsilon2}.\\ \mbox{} \\ \noindent
 {\it Proof:}\indent [{\bf Of Theorem \ref{sepsilon2}}]
Let us assume that $x^{\rm min} \in \X_{p'p, p'q+1}$ and $y^{\rm min}\in \X_{p'p,p'q}$ are periodic minimizers for the potentials $S_j^{\varepsilon,1}$. That is, they minimize the action $W_{p'p}^{\varepsilon,1}$ over the spaces $\X_{p'p, p'q+1}$ and $\X_{p'p,p'q}$ respectively. Moreover, let $\xi\in \R$ be as in the conclusion of Proposition \ref{firstperturbation} and let us assume that 
$$\xi \leq x_{k_i}^{\rm min} + l_i \leq \xi+\frac{1}{2p} \ ,$$  
for certain integers
$0\leq k_1 < k_2 < \ldots < k_N < p'p$ and $l_1, l_2, \ldots, l_N\in \Z$. To start with, we will prove an upper bound for $N$ as follows.
\\
\indent We first define the sequences $\tilde x^{\rm min} \in \X_{p'p, p'q}$ and $\tilde y^{\rm min} \in \X_{p'p, p'q+1}$ by 
\begin{align}
\begin{array}{ll}
\tilde x_i^{\rm min}:= & x_{i}^{\rm min} - n\ , \ \mbox{for} \ np'p \leq i < (n+1)p'p\ , \\
\tilde y_i^{\rm min}:= & y_{i}^{\rm min} + n\ , \ \mbox{for} \ np'p \leq i < (n+1)p'p\ .
\end{array}
\end{align}
It is clear from these definitions that $\tilde x^{\rm min} \in \X_{p'p, p'q}$ and $\tilde y^{\rm min} \in \X_{p'p, p'q+1}$. Thus, by Proposition \ref{firstperturbation}, due to our assumptions on $x^{\rm min}$ and because $\tilde x^{\rm min}_i=x^{\rm min}_i$ for all $0\leq i < p'p$, it holds that
\begin{align}\label{bumpaction}
W_{p'p}^{\varepsilon,1}(\tilde x^{\rm min})-W_{p'p}^{\varepsilon,1}(y^{\rm min})\geq \frac{N\varepsilon}{3C_k p^k}\ .
\end{align}
At the same time, because $x^{\rm min}$ is a minimizer, we have that 
\begin{align}\label{trivialestimate}
\begin{array}{l}
W_{p'p}^{\varepsilon,1}(x^{\rm min})- W_{p'p}^{\varepsilon,1}(\tilde y^{\rm min}) \leq 0\ .
\end{array}
\end{align}
Finally, using that $r\leq p'p$, it is easy to see that  
$$|x_i^{\rm min}-\tilde x_i^{\rm min}| = |y_i^{\rm min}-\tilde y_i^{\rm min}| =\left\{ \begin{array}{ll}  0 & \mbox{when} \ 0 \leq i < p'p \\  1 & \mbox{when} \ -r\leq i <0 \ \mbox{or} \ p'p \leq i \leq p'p+r-1 \end{array} \right. .$$ 
This shows that 
$$||x^{\rm min}-\tilde x^{\rm min}||_{l_1[-r, p'p-1+r]} = ||y^{\rm min}-\tilde y^{\rm min}||_{l_1[-r, p'p-1+r]} =  2r\ .$$ 
By Proposition \ref{lipschitzcontinuity} on Lipschitz-continuity, it therefore holds that
\begin{align}\label{lipschitzaction}
\begin{array}{l}
\left| W_{p'p}^{\varepsilon,1}(x^{\rm min})-W_{p'p}^{\varepsilon,1}(\tilde x^{\rm min})\right| \leq 2r(2r+1)C\ ,\\
\left| W_{p'p}^{\varepsilon,1}(y^{\rm min})-W_{p'p}^{\varepsilon,1}(\tilde y^{\rm min})\right| \leq 2r(2r+1)C\ .
\end{array}
\end{align}
Combining (\ref{bumpaction}), (\ref{trivialestimate}) and (\ref{lipschitzaction}) we obtain in particular that 
\begin{align}\label{almostthere}\nonumber
\frac{N\varepsilon}{3C_k p^k}\leq W_{p'p}^{\varepsilon,1}(\tilde x^{\rm min})-&W_{p'p}^{\varepsilon,1}(y^{\rm min}) =  \left( W_{p'p}^{\varepsilon,1}(\tilde x^{\rm min})-W_{p'p}^{\varepsilon,1}(x^{\rm min})\right) +\\ \left( W_{p'p}^{\varepsilon,1}(x^{\rm min})-W_{p'p}^{\varepsilon,1}(\tilde y^{\rm min}) \right)  &+ \left(W_{p'p}^{\varepsilon,1}(\tilde y^{\rm min})-W_{p'p}^{\varepsilon,1}(y^{\rm min})\right) \leq 4r(2r+1)C\ .\end{align}
This shows that $N$ is bounded from above:
 $$N\leq 12CC_kr(2r+1)p^k/\varepsilon < 6CC_{k,r}(2r+1)^2p^k/\varepsilon \ . $$
Using the pigeonhole principle, we therefore see that the interval $[\xi, \xi+\frac{1}{2p}]$ contains a subinterval of lenght at least equal to 
$$\frac{1/2p}{6CC_k(2r+1)^2p^k/\varepsilon}  = \frac{\varepsilon}{12CC_k(2r+1)^2p^{k+1}}= \frac{\varepsilon}{C_{k,r}p^{k+1}}\, $$ 
that is complementary to $\Sigma_{x^{\rm min}}$. \\ \indent The remaining statements of Theorem \ref{sepsilon2} now follow immediately from Theorem \ref{destructionperiodic}.
$\hfill \square$

\section{A persistence theorem for gaps}\label{periodicpersistencesection}
In the proof of Theorem \ref{sepsilon2} we saw that when $p'p\geq r$ and
$$\frac{q}{p} < \frac{q}{p}+\frac{1}{p'p}< \omega < \frac{q}{p}+\frac{1}{(p'-1)p} \leq \frac{q}{p}+ \frac{1}{p}\ ,$$
then there exist arbitrarily small perturbations $S_{j}^{\varepsilon,1}$ of the potentials $S_j$, for which the extended orbit $\Sigma_{x^{\rm min}}$ of any periodic minimizer $x^{\rm min}\in \mathcal{B}_{p'p, p'q+1}$ has a gap of length $\varepsilon/C_{k,r}p^{k+1}$. This gap acted as the support of a further small perturbation $S_j^{\varepsilon,2}$.\\
\indent In Theorem \ref{thebigresult} below, we formulate three conditions on $\omega$ and $\frac{q}{p}$ that ensure that both $\frac{q}{p}+\frac{1}{p'p}$ and $\frac{q}{p}+\frac{1}{(p'-1)p}$ are extremely close to $\omega$. It turns out that under these conditions, part of the gap in the extended orbit of $x^{\rm min}\in \mathcal{B}_{p'p, p'q+1}$ persists to nearby rotation numbers. That is, the minimizers of the $S_j^{\varepsilon,2}$ with rotation numbers $\Omega$ in a certain open neighborhood of $\omega$, will have the same gap.  \\
\indent The precise statement is as follows:
\begin{theorem}\label{thebigresult}
Let $\omega \in\R \backslash \Q$, $\varepsilon >0$, $k\in \N_{\geq 2}$, $\tau >2$ and $(p,q) \in \N\times \Z$ satisfy the conditions
\begin{itemize}
\item[{\bf A1}] The quotient $\frac{q}{p}$ is a very good approximation of $\omega$ in the sense that 
$$\frac{\gamma}{p^{\tau+1}} \leq \omega-\frac{q}{p}<\frac{\gamma}{p^{\tau}}\ .$$
\item[{\bf A2}] The integer $p\geq 1$ and the real number $\tau>2$ are so large that
\begin{align}\label{nrequirement}
p^{\tau-1-2k(k+1)} \geq  10\gamma  \left(\frac{C_{k,r}}{\varepsilon}\right)^{2+2k} .
 \end{align}
\item[{\bf A3}] For technical reasons, we ask that $\varepsilon \leq C_{k,r}/10$, that $p^{\tau-1}\geq 10\gamma$ and that $p^{\tau-1}\geq r\gamma$.
\end{itemize}
Let $p'$ be defined by (\ref{defpprime}). Then $p'p\geq r$ and we let $S_j^{\varepsilon,2}$ and $\eta\in\R$ satisfy the conclusions of Theorem \ref{sepsilon2}. Furthermore, suppose that $\Omega\in \R$ is chosen so that 
\begin{align}\label{QPestimate}
\frac{q}{p} + \frac{1}{p'p} \leq\Omega\leq \frac{q}{p} + \frac{1}{(p'-1)p}\ .
\end{align}
Then there is no maximally periodic Birkhoff minimizer $x\in \mathcal{B}_{\Omega}$ for the local potentials $S_j^{\varepsilon, 2}$ that satisfies 
$$\eta \leq x_0 \leq \eta+\frac{\varepsilon}{2C_{k,r}p^{k+1}}\ .$$ 
\end{theorem}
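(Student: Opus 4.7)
We argue by contradiction: assume such $x$ exists. Write $(P,Q):=(p'p,\,p'q+1)$, passing to the reduced coprime form if necessary; conditions {\bf A1}--{\bf A3} ensure $P\ge r$. Apply Theorem \ref{regularitytheorem} to $x$ with $(p,q)\to(P,Q)$, $\omega\to\Omega$, window radius $2r$, and endpoints $i_1=-MP$, $i_2=MP$ for an integer $M\in\N$ to be chosen. The bound $|Q-\Omega P|\le 1/((p'-1)p)\le 2/P$ from (\ref{QPestimate}) yields $a\le 4M+1$, producing $i_0\in(-P,0]$ and near-periodicity estimates on $2r$-windows around each $i_0+mP$, $|m|\le M$, with $l_1$-error at most $(8M+2)r/P$.

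Define $\tilde x\in\X_{P,Q}$ as the $(P,Q)$-periodic extension of $x|_{[i_0,\,i_0+P-1]}$. Since $0\in[i_0,\,i_0+P-1]$, we have $\tilde x_0=x_0$ in the target interval, so Theorem \ref{sepsilon2} gives
\[
W_P^{\varepsilon,2}(\tilde x) - W_P^{\varepsilon,2}(x^{\rm min}) \ge \delta_0 := \frac{\varepsilon^{k+1}}{3\,C_k\,C_{k,r}^k\,p^{k(k+1)}}.
\]

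Two test configurations drive the contradiction. Set $Y:=[i_0+1,\,i_0+MP]$ and let $\hat x := \tilde x$ on $Y$ and $\hat x := x$ off $Y$; let $\check x := y^{\rm min}$ on $Y$ and $\check x := x$ off $Y$, where $y^{\rm min}$ is a translate of $x^{\rm min}$ chosen to match $x$ to within the (bounded) maximal gap of $\Sigma_{x^{\rm min}}$ on the $2r$-layer of $\partial Y$. Global minimality of $x$ yields $W_{[i_0+1-r,\,i_0+MP+r]}^{\varepsilon,2}(\hat x)\ge W_{[i_0+1-r,\,i_0+MP+r]}^{\varepsilon,2}(x)$ and likewise for $\check x$. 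By $(P,Q)$-periodicity of $\tilde x$ and $y^{\rm min}$, shift-invariance (condition {\bf B}), and Lipschitz continuity (Proposition \ref{lipschitzcontinuity}) for the boundary mismatches, these actions evaluate to $M\,W_P^{\varepsilon,2}(\tilde x)+O(Cr^2 M/P)$ and $M\,W_P^{\varepsilon,2}(x^{\rm min})+O(Cr^2)$, respectively.

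To extract a lower bound on $W_{[i_0+1-r,\,i_0+MP+r]}^{\varepsilon,2}(x)$, we exploit the bump contributions $\phi_2(x_{mP})$ to the action: near-periodicity gives $|x_{mP}-x_0-mQ|=O(rM/P)$, while the Lipschitz bound $\|\phi_2'\|_\infty\le\varepsilon/3$ from Lemma \ref{phi} transfers $\phi_2(x_0)\ge\delta_0$ into $\phi_2(x_{mP})\ge\delta_0/2$ whenever the error is at most $3\delta_0/(2\varepsilon)$. Summing over $m\in\{0,\dots,M-1\}$ yields $W_{[i_0+1-r,\,i_0+MP+r]}^{\varepsilon,2}(x) \ge M\,W_P^{\varepsilon,2}(x^{\rm min})+M\delta_0/2-O(Cr^2 M/P)$. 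Combined with the $\check x$ upper bound, this produces
\[
M\,\delta_0/2 \le O\!\left(Cr^2\,M/P + Cr^2\right).
\]
Condition {\bf A2}, together with $P\ge p^\tau/\gamma$ from {\bf A1}, ensures that $M$ can be chosen in the nonempty range $[O(Cr^2)/\delta_0,\,\delta_0 P/O(Cr^2)]$ to violate this inequality, giving the desired contradiction. The hard part is the simultaneous control of (a) the near-periodicity error from Theorem \ref{regularitytheorem} (linear in $M$), (b) the Lipschitz transfer of the middle-half condition from $x_0$ to $x_{mP}$ (which restricts how large $M$ may be), and (c) the constant boundary Lipschitz errors from matching $\check x$ to $x$ at $\partial Y$; condition {\bf A2}, with its exponent $2+2k$ on $C_{k,r}/\varepsilon$, is precisely crafted so that a single choice of $M$ makes the bump gain $M\delta_0/2$ dominate all three.
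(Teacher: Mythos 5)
Your proposal is structurally quite different from the paper's: instead of the paper's explicit case distinction (whether the $(\tilde p,\tilde q)$-periodic translates of $x$ have uniformly high action on every sub-segment, or drop below the optimal value on at least one), you try to run a single argument that lower-bounds the long-segment action of $x$ by summing bump contributions $\phi_2(x_{mP})$ over $m=0,\dots,M-1$. That alternative structure would be interesting if it closed, but there is a genuine gap at the key step.

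The gap is the ``bump transfer.'' You write that near-periodicity gives $|x_{mP}-x_0-mQ|=O(rM/P)$, and then use the Lipschitz bound on $\phi_2$ to deduce $\phi_2(x_{mP})\ge\delta_0/2$ from $\phi_2(x_0)\ge\delta_0$. But Theorem \ref{regularitytheorem} does not give pointwise control at the index $j=0$. It produces some $i_0$ with $-\tilde p<i_0\le 0$ and controls the $l_1$-norm of $\tau_{\tilde p,\tilde q}^{-m}x-\tau_{\tilde p,\tilde q}^{-n}x$ only over the small window $[i_0-2r,\,i_0+2r-1]$ around $i_0$. The index $j=0$ belongs to this window only when $i_0\ge 1-2r$, whereas the pigeonhole argument behind Theorem \ref{regularitytheorem} gives no control over where $i_0$ lands inside $(-\tilde p,0]$ (and $\tilde p$ is enormous). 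Since $\phi_2(x_0)\ge\delta_0$ is the only pointwise bump information available (it comes from $x_0\in[\eta,\eta+\varepsilon/2C_{k,r}p^{k+1}]$), and since one knows nothing about whether $x_j$ lies in the bump's support for $j$ near $i_0$, the deduction $\phi_2(x_{mP})\ge\delta_0/2$ has no justification. Without it, the lower bound $W_{[i_0+1-r,\,i_0+MP+r]}^{\varepsilon,2}(x)\ge M\,W_P^{\varepsilon,2}(x^{\rm min})+M\delta_0/2-O(Cr^2M/P)$ does not follow, and the contradiction evaporates.

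A secondary point: even granting the bump transfer, your claimed lower bound also tacitly needs $W^{\varepsilon,1}_{[i_0+1-r,\,i_0+MP+r]}(x)\ge M\,W_P^{\varepsilon,1}(x^{\rm min})-O(Cr^2)$, which is not stated; it requires a separate gluing argument using that $x^{\rm min}$ is a global $S_j^{\varepsilon,1}$-minimizer together with boundary matching. The paper sidesteps all of this: in the hard case it compares $W^{\varepsilon,2}_{[i_0,i_0+\tilde p-1]}(x)$ (which it controls via the periodic extension $\hat x$ of $x$ near $i_0$, using $\hat x_0=x_0$ and Theorem \ref{sepsilon2}) against the action of the translated low-action sub-segment $\tau_{\tilde p,\tilde q}^{-n^*}x$ glued into $x$ near $i_0$; the near-periodicity estimate is used only to show that this gluing costs little, which is exactly a comparison in the window around $i_0$ and so is licensed by Theorem \ref{regularitytheorem}. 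Your proof has no analogue of this mechanism and would need one to go through.
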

The proof of Theorem \ref{thebigresult} will be given in Section \ref{proofsection}. In the remainder of this section, we will show that when $\omega$ is not too irrational, the conditions of Theorem \ref{thebigresult} can actually be satisfied. As a consequence, we can then prove Theorem \ref{maintheorem} of the introduction.
 
\subsection{A class of not so irrational numbers}\label{rationalnumberssection}
One may wonder which rotation numbers admit rational approximations that satisfy the conditions of Theorem \ref{thebigresult}. It turns out convenient to define, for every $\gamma>0$ and $\tau>2$, the following sets
$$\mathcal{L}_{\gamma, \sigma}^- \!:=\! \left\{ \omega\in \R\backslash \Q\ \! \left| \ \! \exists\ \mbox{sequence}\! \ (p_j, q_j)\in \N\times \Z\ \mbox{with} \ 0< \omega - \frac{q_j}{p_j} < \frac{\gamma}{p_j^{\sigma}} \ \mbox{and} \lim_{j\to \infty} p_j=\infty\right. \! \right\} ,$$
$$\mathcal{L}_{\gamma, \sigma}^+ \!:=\! \left\{ \omega\in \R\backslash \Q\ \! \left| \ \! \exists\ \mbox{sequence}\! \ (p_j, q_j)\in \N\times \Z\ \mbox{with} \ \!\! -\frac{\gamma}{p_j^{\sigma}}< \omega - \frac{q_j}{p_j} < 0  \ \mbox{and} \lim_{j\to \infty} p_j=\infty \right. \! \right\} .$$
Obviously, $\mathcal{L}_{\gamma, \sigma}^{-}$ consists of rotation numbers that can be approximated quite well by rational numbers from below, whereas the elements of $\mathcal{L}_{\gamma, \sigma}^+$ admit good rational approximations from above. Finally, we define 
$$\mathcal{L}_{\gamma, \sigma}:= \mathcal{L}_{\gamma, \sigma}^- \cup \mathcal{L}_{\gamma, \sigma}^+\ .$$
\noindent For bookkeeping reasons, we will prove Theorems \ref{maintheorem} and \ref{maintheorem2} under the assumption that $\omega\in\mathcal{L}_{\gamma,\sigma}^-$. When $\omega\in  \mathcal{L}_{\gamma,\sigma}^+$, the proofs are almost identical. \\
\indent In order to obtain some intuition about the ``size'' of $\mathcal{L}_{\gamma, \sigma}$, we provide the following proposition. It shows that $\mathcal{L}_{\gamma, \sigma}$ is simultaneously a ``large'' and a ``small'' subset of $\R$.

\begin{proposition} Every $\mathcal{L}_{\gamma, \sigma}$ contains all Liouville numbers and is hence uncountable. Although every $\mathcal{L}_{\gamma, \sigma}$ also contains some Diophantine numbers, it has zero Lebesgue measure. 
\end{proposition}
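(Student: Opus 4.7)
The plan is to treat the three assertions separately. For the Liouville inclusion, recall that by definition a Liouville number $\omega$ admits, for every $n \in \N$, a rational approximation $q/p$ with $p \geq 2$ satisfying $|\omega - q/p| < 1/p^n$. Taking $n > \sigma$ and restricting to $p > 1/\gamma$ yields $|\omega - q/p| < 1/p^n \leq \gamma/p^\sigma$. Running $n$ through arbitrarily large integers, and using that any fixed rational violates the Liouville inequality for all sufficiently large $n$, one extracts a sequence of such approximants with $p_j \to \infty$. Hence every Liouville number lies in $\mathcal{L}_{\gamma,\sigma}$, which is therefore uncountable (and in fact dense).

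For the measure claim I plan a Borel--Cantelli argument. Note that $\mathcal{L}_{\gamma,\sigma}$ is contained in the limsup set
$$E \;=\; \bigcap_{N \geq 1} \;\bigcup_{p \geq N} \;\bigcup_{q \in \Z} \left(\tfrac{q}{p} - \tfrac{\gamma}{p^\sigma},\; \tfrac{q}{p} + \tfrac{\gamma}{p^\sigma}\right).$$
Intersecting with a fixed bounded interval $[-M,M]$, for each $p$ only at most $2Mp+2$ values of $q$ are relevant, contributing total measure at most $(2Mp+2) \cdot 2\gamma/p^\sigma = O(p^{-(\sigma-1)})$. Since $\sigma > 2$, the series $\sum_p p^{-(\sigma-1)}$ converges, so the measure of $\bigcup_{p \geq N}(\cdots) \cap [-M,M]$ tends to zero as $N \to \infty$. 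Hence $|E \cap [-M,M]| = 0$ for every $M$, giving $|\mathcal{L}_{\gamma,\sigma}| = 0$.

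For the existence of Diophantine elements, I would use a continued-fraction construction. Fix any $\tau > \sigma$ and build an irrational $\omega = [a_0;a_1,a_2,\ldots]$ whose partial quotients satisfy $\frac{1}{\gamma}\, q_{n-1}^{\sigma-2} \leq a_n \leq q_{n-1}^{\tau-2}$ for all sufficiently large $n$, where $q_n$ denotes the denominator of the $n$-th convergent. The interval is nonempty for large $q_{n-1}$ precisely because $\tau > \sigma$. The upper bound on $a_n$, combined with the classical fact that the best rational approximations to $\omega$ are its convergents, will imply $|\omega - p/q| \geq c/q^{\tau}$ for \emph{every} rational $p/q$, making $\omega$ Diophantine of exponent $\tau$. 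The lower bound, together with the standard estimate $|\omega - p_n/q_n| \asymp 1/(a_{n+1} q_n^2)$, produces infinitely many convergents with $|\omega - p_n/q_n| < \gamma/q_n^\sigma$, placing $\omega$ in $\mathcal{L}_{\gamma,\sigma}$. The only real obstacle I anticipate is this last Diophantine step: one has to convert information about the convergents into a uniform bound valid over all rationals, which requires invoking the theory of best rational approximations rather than merely inspecting convergents.
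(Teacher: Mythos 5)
Your proof is correct, but takes a genuinely different route from the paper on two of the three claims. For the Liouville inclusion (hence uncountability) you and the paper do essentially the same thing, though the paper phrases it more compactly by observing $\mathcal{L}=\bigcap_{\gamma,\sigma}\mathcal{L}_{\gamma,\sigma}$. For zero Lebesgue measure, the paper does not run a Borel--Cantelli estimate: it instead notes that $\mathcal{D}_{\delta,\tau}\subset \R\backslash(\Q\cup\mathcal{L}_{\gamma,\sigma})$ for $\tau<\sigma$ and quotes the classical fact that $\bigcup_{\gamma>0}\mathcal{D}_{\gamma,\tau}$ has full measure; your direct limsup/covering computation is more elementary, self-contained, and in fact proves the well-known fact the paper cites. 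For the existence of Diophantine elements, the paper gives the terse argument ``$\mathcal{L}_{\gamma,\sigma}\neq\mathcal{L}$, and any non-Liouville irrational is Diophantine of some exponent,'' whereas you build an explicit continued fraction with $\tfrac{1}{\gamma}q_{n-1}^{\sigma-2}\leq a_n\leq q_{n-1}^{\tau-2}$; your construction is more work but more convincing, since the paper's ``it is clear that $\mathcal{L}_{\gamma,\sigma}\neq\mathcal{L}$'' is itself asking for something like your explicit example. One small technical remark on your continued fraction step: what matters for the Diophantine bound is controlling the gap between consecutive convergent denominators ($q_n\leq (a_n+1)q_{n-1}$ combined with the best-approximation inequality $|q\omega-p|\geq|q_{n-1}\omega-p_{n-1}|$ for $q<q_n$), which is exactly the standard argument you anticipate needing, and the lower bound on $a_n$ only needs to hold along a subsequence, not for all large $n$. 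Both approaches are fine; yours trades reliance on cited number-theoretic facts for a bit more hands-on computation.
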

\begin{proof} To prove that $\mathcal{L}_{\gamma, \sigma}$ is uncountable, one can note that the intersection of all $\mathcal{L}_{\gamma, \sigma}$ is the well-known set of {\it Liouville numbers}:
$$\mathcal{L}:=\bigcap_{\gamma, \sigma}\mathcal{L}_{\gamma, \sigma} = \! \left\{ \omega\in \R\backslash \Q \ | \ \mbox{for all} \ n\in \N\  \mbox{there are} \ (p, q) \in \N\times\Z\ \mbox{so that}\ \left| \omega - \frac{q}{p} \right| < \frac{1}{p^{n}}\right\} .$$
Of course $\mathcal{L}$ is known to be uncountable, and hence so is every $\mathcal{L}_{\gamma, \sigma}$. \\ 
\indent To prove that $\mathcal{L}_{\gamma, \sigma}$ has zero Lebesgue measure, let us recall that if a number $\omega$ is not Liouville, then it is {\it Diophantine}, that is there are constants $\gamma>0$ and $\sigma>2$ so that
$$\omega\in \mathcal{D}_{\gamma, \sigma}:= \left\{ \omega \in \R \ | \ \left| \omega - \frac{q}{p}\right| \geq \frac{\gamma}{p^{\sigma}} \ \mbox{for all} \ (p,q)\in \N\times \Z \right\} .$$ 
It is clear that $\mathcal{L}_{\gamma, \sigma} \neq \mathcal{L}$ and hence every $\mathcal{L}_{\gamma, \sigma}$ contains some Diophantine numbers. But at the same time, it is not hard to check that
$$\mathcal{D}_{\delta, \tau} \subset \R\backslash (\Q \cup \mathcal{L}_{\gamma, \sigma})\ \mbox{for all} \ \gamma, \delta > 0\ \mbox{and} \ \tau < \sigma .$$
Because it is well known that for all $\tau>2$, the union $\bigcup_{\gamma>0} \mathcal{D}_{\gamma, \tau}$ has full Lebesgue measure in $\R$, this makes it clear that $\mathcal{L}_{\gamma, \sigma}$ has zero Lebesgue measure. 
\end{proof}

\subsection{Proof of Theorem \ref{maintheorem}}
Accepting Theorem \ref{thebigresult}, it is now easy to prove Theorem \ref{maintheorem} presented in the introduction. \\
\indent First of all, we remark that the elements of $\mathcal{L}_{\gamma, \sigma}^-$ are the ones for which Theorem \ref{thebigresult} has a nontrivial meaning:
\begin{proposition}\label{satisfyconditions}
Let the perturbation parameter $\varepsilon>0$, the differentiability degree $k\in \N_{\geq 2}$, the real numbers $\gamma>0$ and $\sigma>1+2k(k+1)$ and the rotation number $\omega\in \mathcal{L}_{\gamma, \sigma}^-$ be given. 
\\ \indent 
Then there exists a real number $\tau\geq \sigma$ and a pair $(p,q)\in \N\times \Z$ for which the conditions {\bf A1}, {\bf A2} and {\bf A3} of Theorem \ref{thebigresult} hold.
\end{proposition}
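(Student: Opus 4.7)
The plan is to exploit the very good rational approximations from below guaranteed by $\omega\in \mathcal{L}_{\gamma, \sigma}^-$, and to show that the remaining conditions \textbf{A2} and \textbf{A3} are rendered automatic by the strength of the hypothesis $\sigma>1+2k(k+1)$.

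First, I would reduce to the case $\varepsilon\leq C_{k,r}/10$. This is cost-free: a perturbation of $C^k$-norm at most $\varepsilon'=\min(\varepsilon, C_{k,r}/10)$ is in particular a perturbation of $C^k$-norm at most $\varepsilon$, so any $(p,q,\tau)$ that works for $\varepsilon'$ works for $\varepsilon$. This disposes of the first clause of \textbf{A3}.

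Next, I would invoke the defining property of $\mathcal{L}_{\gamma, \sigma}^-$ to extract a sequence $(p_j, q_j)\in \mathbb{N}\times \mathbb{Z}$ with $0<\omega-q_j/p_j<\gamma/p_j^{\sigma}$ and $p_j\to\infty$. For each $j$, set
$$\alpha_j:=\log_{p_j}\!\left(\frac{\gamma}{\omega-q_j/p_j}\right),$$
so that $\omega-q_j/p_j=\gamma/p_j^{\alpha_j}$. The approximation inequality translates to $\alpha_j>\sigma$. I would then define $\tau_j:=\max(\sigma,\alpha_j-1)$, which satisfies $\tau_j\geq \sigma$ and $0<\alpha_j-\tau_j\leq 1$. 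The latter yields
$$\frac{\gamma}{p_j^{\tau_j+1}}\leq \frac{\gamma}{p_j^{\alpha_j}}=\omega-\frac{q_j}{p_j}<\frac{\gamma}{p_j^{\tau_j}},$$
which is exactly condition \textbf{A1}.

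It remains to verify \textbf{A2} and the two tail conditions in \textbf{A3}. Because $\tau_j\geq \sigma$, the exponents $\tau_j-1-2k(k+1)$ and $\tau_j-1$ are uniformly bounded below by the strictly positive constants $\sigma-1-2k(k+1)>0$ and $\sigma-1>0$ respectively. Since $p_j\to\infty$, the quantities $p_j^{\tau_j-1-2k(k+1)}$ and $p_j^{\tau_j-1}$ tend to $\infty$ as $j\to\infty$. Consequently, for all sufficiently large $j$ the inequalities
$$p_j^{\tau_j-1-2k(k+1)}\geq 10\gamma\!\left(\frac{C_{k,r}}{\varepsilon}\right)^{\!2+2k},\qquad p_j^{\tau_j-1}\geq \max(10\gamma, r\gamma)$$
hold simultaneously. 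Picking such a $j$ and setting $(p,q,\tau):=(p_j,q_j,\tau_j)$ produces the required triple.

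There is no substantive obstacle in the argument: the entire content of the proposition is a bookkeeping check that $\sigma>1+2k(k+1)$ is the precise Diophantine exponent that makes the polynomial-in-$p$ side conditions \textbf{A2}--\textbf{A3} vacuous along the approximating sequence. The only mild point to watch is the sign of $\omega-q/p$, which is why the proposition is phrased for $\mathcal{L}_{\gamma,\sigma}^-$; the analogous statement for $\mathcal{L}_{\gamma,\sigma}^+$ would run identically against the symmetric version of Theorem~\ref{thebigresult} obtained by approximating $\omega$ from above.
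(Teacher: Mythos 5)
Your proof is correct and follows essentially the same route as the paper: extract a good approximation $q_j/p_j$ from the definition of $\mathcal{L}_{\gamma,\sigma}^-$, pick $\tau\geq\sigma$ to sandwich the approximation error as in \textbf{A1}, and then note that $\sigma>1+2k(k+1)$ forces $p^{\sigma-1-2k(k+1)}\to\infty$, so \textbf{A2} and the $p$-dependent parts of \textbf{A3} hold for $p$ large. Your opening observation that the first clause of \textbf{A3} ($\varepsilon\leq C_{k,r}/10$) is independent of $(p,q,\tau)$ and can be arranged at no cost by shrinking $\varepsilon$ is a slightly more careful treatment of a point the paper passes over with "by choosing $p$ even larger when necessary."
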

\begin{proof}
When $\omega\in\mathcal{L}_{\gamma, \sigma}^-$, then there exist arbitrarily large integers $p \in \N$ and $q\in \Z$ for which the estimate 
$$0 < \omega - \frac{q}{p} < \frac{\gamma}{p^{\sigma}}$$ 
holds. Clearly, for every such $p$ and $q$ there exists a $\tau\geq \sigma$ so that condition {\bf A1} holds, that is for which
\begin{align}\label{basicpqestimate}
\frac{\gamma}{p^{\tau+1}} \leq  \omega - \frac{q}{p}  < \frac{\gamma}{p^{\tau}}\ .
\end{align}
When $\sigma>1+2k(k+1)$, then one can choose $p$ so large that 
$$p^{\tau-1-2k(k+1)} \geq p^{\sigma-1-2k(k+1)} \geq  10\gamma  \left(\frac{C_{k,r}}{\varepsilon}\right)^{2+2k}\ .$$
This means that condition {\bf A2} holds.\\
\indent 
By choosing $p$ even larger when necessary, we can also make it satisfy condition {\bf A3}. 
\end{proof}
Not surprisingly, when $\omega\in \mathcal{L}_{\gamma, \sigma}^+$, then Proposition \ref{satisfyconditions} is true when condition {\bf A1} is replaced by the estimate
$$-\frac{\gamma}{p^{\tau}} < \omega-\frac{q}{p} \leq -\frac{\gamma}{p^{\tau+1}}\ .$$
Theorem \ref{thebigresult} and Proposition \ref{satisfyconditions} together imply:
\begin{theorem}\label{maintheoremprecise}
Assume that the $S_j$ are local potentials that satisfy conditions {\bf A}-{\bf E} of Section \ref{problemsetup} and let $\varepsilon>0$ be a perturbation parameter, $k\in \N_{\geq 2}$ a differentiability degree, $\gamma>0$ and $\sigma>1+2k(k+1)$ real numbers and $\omega\in \mathcal{L}^-_{\gamma, \sigma}$ a rotation number.\\
\indent Then there exist local potentials $S^{\varepsilon}_j$ that satisfy conditions {\bf A}-{\bf E} and the estimate 
$$||S_j^{\varepsilon} - S_j||_{C^k}\leq \frac{2}{3}\varepsilon\leq \varepsilon, $$
as well as a $\delta >0$ and a nonempty interval $(\eta_-, \eta_+)\subset \R$, for which the following holds. \\
\indent When $\Omega\in \R$ is a rotation number satisfying $\left|\omega- \Omega\right|<\delta$ and  $x\in \mathcal{B}_{\Omega}$ is a maximally periodic global minimizer of the perturbed potentials $S_{j}^{\varepsilon}$, then
 $$x_0\notin (\eta_-, \eta_+)\ .$$
\end{theorem}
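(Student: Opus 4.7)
The strategy is to deduce Theorem \ref{maintheoremprecise} as an essentially immediate consequence of Theorem \ref{thebigresult} together with Proposition \ref{satisfyconditions}. Proposition \ref{satisfyconditions} converts the hypothesis $\omega \in \mathcal{L}^-_{\gamma,\sigma}$ with $\sigma > 1 + 2k(k+1)$ into the existence of a rational approximation $(p,q) \in \N \times \Z$ and an exponent $\tau \geq \sigma$ satisfying hypotheses \textbf{A1}, \textbf{A2}, \textbf{A3} of Theorem \ref{thebigresult}. With those hypotheses in force, Theorem \ref{thebigresult} directly furnishes perturbed potentials and a short interval in which no maximally periodic Birkhoff minimizer can have its $0$-th coordinate, for every rotation number $\Omega$ in a closed interval around $\omega$.

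Concretely, I would first invoke Proposition \ref{satisfyconditions} to fix $(p,q)$ and $\tau$. Then I would follow the construction of Section \ref{firststepssection}: Proposition \ref{firstperturbation} produces potentials $S_j^{\varepsilon,1}$ satisfying conditions \textbf{A}--\textbf{E} and $||S_j^{\varepsilon,1} - S_j||_{C^k} \leq \varepsilon/3$, and Theorem \ref{sepsilon2} produces a further perturbation $S_j^{\varepsilon,2}$ with $||S_j^{\varepsilon,2} - S_j^{\varepsilon,1}||_{C^k} \leq \varepsilon/3$, again preserving \textbf{A}--\textbf{E}. I would set $S_j^{\varepsilon} := S_j^{\varepsilon,2}$; the triangle inequality immediately gives the claimed bound $||S_j^{\varepsilon} - S_j||_{C^k} \leq 2\varepsilon/3 \leq \varepsilon$.

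Next I would choose the interval and the tolerance. Theorem \ref{thebigresult} supplies a point $\eta \in \R$; I define $\eta_- := \eta$ and $\eta_+ := \eta + \varepsilon/(2 C_{k,r} p^{k+1})$, which is a nonempty open interval. For the choice of $\delta$, recall that the integer $p'$ was defined by $1/(p'p) < \omega - q/p < 1/((p'-1)p)$, so $\omega$ lies in the \emph{interior} of the closed interval $\bigl[\tfrac{q}{p} + \tfrac{1}{p'p},\ \tfrac{q}{p} + \tfrac{1}{(p'-1)p}\bigr]$ appearing in hypothesis (\ref{QPestimate}). Hence I can pick $\delta > 0$ small enough that every $\Omega$ with $|\Omega - \omega| < \delta$ still lies in this closed interval. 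For any such $\Omega$ and any maximally periodic Birkhoff minimizer $x \in \mathcal{B}_{\Omega}$ of the $S_j^{\varepsilon}$, Theorem \ref{thebigresult} forbids $x_0$ from lying in $[\eta, \eta + \varepsilon/(2 C_{k,r} p^{k+1})] \supset (\eta_-, \eta_+)$, which is exactly the conclusion I need. The companion case $\omega \in \mathcal{L}^+_{\gamma,\sigma}$ required to obtain the full statement involving $\mathcal{L}_{\gamma,\sigma}$ is handled by the symmetric one-sided approximation mentioned after Proposition \ref{satisfyconditions}.

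At this stage there is no real obstacle, since the whole substance of the argument has been packaged into Theorem \ref{thebigresult}. The genuine technical difficulty — controlling the $0$-th coordinate of a minimizer of irrational rotation number $\Omega$ near $\omega$ by comparing it against a periodic minimizer in $\mathcal{B}_{p'p, p'q+1}$, via the near-periodicity Theorem \ref{regularitytheorem} — is precisely what Section \ref{proofsection} is devoted to, and that is where I would expect the real work to lie.
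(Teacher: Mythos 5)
Your proof is correct and mirrors the paper's own argument: both invoke Proposition \ref{satisfyconditions} to get $(p,q)$ and $\tau$, build $S_j^{\varepsilon}=S_j^{\varepsilon,2}$ via Proposition \ref{firstperturbation} and Theorem \ref{sepsilon2}, set $(\eta_-,\eta_+)=(\eta,\eta+\varepsilon/2C_{k,r}p^{k+1})$, and take $\delta$ small enough that $|\Omega-\omega|<\delta$ forces $\Omega$ into the interval of hypothesis (\ref{QPestimate}) so that Theorem \ref{thebigresult} applies. The paper merely makes your ``pick $\delta$ small enough'' explicit by taking $\delta=\min\bigl\{\bigl(\tfrac{q}{p}+\tfrac{1}{(p'-1)p}\bigr)-\omega,\ \omega-\bigl(\tfrac{q}{p}+\tfrac{1}{p'p}\bigr)\bigr\}$.
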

\begin{proof}
Given $\varepsilon>0$, our assumptions on $k, \sigma$ and $\omega$ and Proposition \ref{satisfyconditions} guarantee that there exist a $\tau \geq \sigma$ and a pair $(p,q)\in \N\times \Z$ for which the assumptions of Theorem \ref{thebigresult} hold. We fix such $p$ and $q$. \\
\indent We then define $p'$ by (\ref{defpprime}) and consecutively construct the local potentials $S_j^{\varepsilon,2}$, as in Section \ref{firststepssection}. They satisfy conditions {\bf A}-{\bf E} and $||S_j^{\varepsilon,2}-S_j||_{C^k}\leq \frac{2}{3}\varepsilon\leq \varepsilon$ and come with an $\eta\in \R$ for which the conclusion of Theorem \ref{thebigresult} holds. \\
\indent Then it is clear that the conclusions of Theorem \ref{maintheoremprecise} hold with the choices $S_j^{\varepsilon}=S_j^{\varepsilon,2}$, $\eta_-=\eta$, $\eta_+=\eta+\frac{\varepsilon}{2C_{k,r}p^{k+1}}$ and $\delta = \min\left\{\left( \frac{q}{p} + \frac{1}{(p'-1)p}\right)-\omega, \omega-\left(\frac{q}{p}+\frac{1}{p'p}\right)\right\}$.
\end{proof}
Together with a similar theorem for $\omega\in \mathcal{L}_{\gamma, \sigma}^+$, this proves Theorem \ref{maintheorem} in the introduction.

\section{Proof of Theorem \ref{thebigresult}}\label{proofsection}
In this section, we prove Theorem \ref{thebigresult}. But let us first try to provide some intuition. \\
\indent 
In Theorem \ref{sepsilon2} we have seen that an $x\in \X_{p'p, p'q+1}$ with $\eta\leq x_0\leq \eta+\varepsilon/2C_{k,r}p^{k+1}$ will fail to be a minimizer for the perturbed local potentials $S_j^{\varepsilon,2}$ by an amount 
\begin{align}\label{bla1}
\frac{\varepsilon}{3C_k}\left(\varepsilon/C_{k,r}p^{k+1}\right)^k\ .
\end{align}
Admittedly, this seems very little. \\
\indent At the same time, when $\Omega\in\R$ is a rotation number satisfying
$$\frac{q}{p} + \frac{1}{p'p} \leq \Omega \leq \frac{q}{p} + \frac{1}{(p'-1)p}\ ,$$
then clearly 
$$\left| (p'q+1) - \Omega p'p  \right| \leq \frac{1}{p'-1}\ .$$
Therefore, our near-periodicity Theorem \ref{regularitytheorem} says that, at least if $p'p$ and $p'q+1$ were relative prime, any maximally periodic $x \in \mathcal{B}_{\Omega}$ will fail to be $(p'p, p'q+1)$-periodic by an amount of the order
\begin{align}\label{bla2}
\frac{2r}{p'p}\left\lceil p'p \cdot \frac{1}{p'-1} \right\rceil \ .
\end{align}
We will show that when conditions {\bf A1}, {\bf A2} and {\bf A3} of Theorem \ref{thebigresult} hold, then (\ref{bla1}) is much larger than (\ref{bla2}). This simple observation is the core of the proof of Theorem \ref{thebigresult}.
\\ \indent 
Because $p'p$ and $p'q+1$ are not necessarily relative prime, we start with a proposition that says that they almost are:
\begin{proposition}\label{propertiesptilde}
Let us write
$$p'p = m \tilde p \ \ \mbox{and}\ \ p'q+1=m\tilde q\  \mbox{with} \ \tilde p\in \N\ \mbox{and} \ \tilde q\in \Z\ \mbox{relative prime and}\ m\in\N.$$ 
Then the least common multiple of $p$ and $\tilde p$ is $p'p$. Moreover, $p' \leq \tilde p \leq p'p$. 
\end{proposition}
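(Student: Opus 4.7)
The plan is purely number-theoretic: everything follows from carefully identifying what $m=\gcd(p'p,p'q+1)$ can be.

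First I would show that $m \mid p$. Indeed, $m$ divides the integer combination
$$q \cdot (p'p) - p \cdot (p'q+1) = -p,$$
so $m\mid p$. Writing $p = m p''$ for some $p'' \in \mathbb{N}$ immediately gives $\tilde p = p'p/m = p' p''$, which is a multiple of $p'$; hence $\tilde p \geq p'$. The upper bound $\tilde p \leq p'p$ is trivial because $m \geq 1$.

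Next I would establish that $\gcd(m, p') = 1$. If a prime $\ell$ divided both, then since $\ell \mid m$ we have $\ell \mid p'q+1$; but $\ell \mid p'$ also forces $\ell \mid p'q$, whence $\ell \mid 1$, contradiction.

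Finally, to compute the lcm, I combine the previous two facts. Since $p = mp''$ and $\tilde p = p' p''$ with $\gcd(m, p') = 1$, we get $\gcd(p,\tilde p) = p'' \gcd(m, p') = p''$, so
$$\mathrm{lcm}(p,\tilde p) \;=\; \frac{p \cdot \tilde p}{\gcd(p,\tilde p)} \;=\; \frac{mp'' \cdot p' p''}{p''} \;=\; m p'' p' \;=\; p' p,$$
as required. None of the steps is a real obstacle; the only subtle point is noticing that the relation $q(p'p)-p(p'q+1)=-p$ forces $m$ to divide $p$ (not merely $p'p$), which is what makes $\tilde p$ a multiple of $p'$ and drives the whole argument.
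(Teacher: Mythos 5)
Your proof is correct, and it takes a genuinely different route from the paper's. The paper proves the lcm claim directly: it takes an arbitrary common multiple $n=\alpha p=\beta\tilde p$ and observes that $n/(p'p)=n\bigl(\tfrac{\tilde q}{\tilde p}-\tfrac{q}{p}\bigr)=\beta\tilde q-\alpha q$ is an integer. For the bound $\tilde p\geq p'$, the paper uses $p\tilde q\geq q\tilde p+1$ to derive $\tfrac{q}{p}+\tfrac{1}{\tilde p p}<\omega$ and then invokes the defining inequality (5.2) of $p'$, so that argument is not purely arithmetic but leans on the role of $\omega$. You instead isolate the single structural fact that $m=\gcd(p'p,\,p'q+1)$ divides $p$ (via the Bezout-type combination $q(p'p)-p(p'q+1)=-p$), which immediately gives the factorization $p=mp''$, $\tilde p=p'p''$. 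This yields the \emph{stronger} conclusion that $\tilde p$ is a multiple of $p'$, from which $\tilde p\geq p'$ is trivial, and together with $\gcd(m,p')=1$ it pins down $\gcd(p,\tilde p)=p''$ so that $\mathrm{lcm}(p,\tilde p)=p\tilde p/p''=p'p$. The advantage of your approach is that it is self-contained in elementary number theory and does not depend on the definition of $p'$ through $\omega$; the paper's argument is shorter for the lcm part but establishes only the weaker inequality $\tilde p\geq p'$ rather than divisibility.
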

\begin{proof}
It is clear that $p'p$ is a common multiple of $p$ and $\tilde p$. When $n=\alpha p=\beta \tilde p$ is another common multiple, then $\frac{n}{p'p}=n\left(\frac{\tilde q}{\tilde p} - \frac{q}{p}\right) =\beta \tilde q-\alpha q$. Thus $n$ is a multiple of $p'p$.\\
\indent It is obvious that $\tilde p =\frac{p'p}{m}\leq p'p$. The upper bound follows because our definitions say that $\frac{\tilde q}{\tilde p}> \frac{q}{p}$. This implies that $p\tilde q \geq q\tilde p +1$, and therefore that
$$\frac{q}{p} + \frac{1}{\tilde p p} = \frac{q\tilde p+1}{p\tilde p} \leq \frac{p\tilde q}{p \tilde p} = \frac{\tilde q}{\tilde p} = \frac{q}{p}+\frac{1}{p'p} <\omega\ . $$
This proves that $\tilde p \geq p'$, by definition of $p'$.
\end{proof}
In particular, Proposition \ref{propertiesptilde} says that $\tilde p$ is more or less as large as $p'p$. Bearing this in mind, we start the proof of Theorem \ref{thebigresult}.\\ \mbox{} \\ \noindent
{\it Proof:}\indent 
[{\bf Of Theorem \ref{thebigresult}}] 
We start with a trivial estimate. Namely, from assumption {\bf A1} that says that
$\frac{\gamma}{p^{\tau+1}}  \leq \omega - \frac{q}{p} < \frac{\gamma}{p^{\tau}}$ and the definition of $p'$ that says that $\frac{1}{p'p}< \omega - \frac{q}{p} < \frac{1}{(p'-1)p}$ it follows immediately that $p'$ is very large:
\begin{align}\label{bothestimates1}
\frac{p^{\tau-1}}{\gamma} < p' < 1 + \frac{p^{\tau}}{\gamma} \ .
\end{align}
By assumption {\bf A3} it therefore follows that $p'p\geq \frac{p^{\tau}}{\gamma}\geq \frac{p^{\tau-1}}{\gamma} \geq r$. By Theorem \ref{sepsilon2} we can hence construct the perturbed potentials $S^{\varepsilon,2}_j$. They come with an $\eta\in \R$ for which the conclusions of Theorem \ref{sepsilon2} hold. More precisely, when $x^{\rm min} \in \X_{p'p, p'q+1}$ is a periodic minimizer and $x\in \X_{p'p, p'q+1}$ satisfies $\eta \leq x_0\leq\eta+\varepsilon/2C_{k,r}p^{k+1}$, then one has the estimate $W_{p'p}^{\varepsilon,2}(x)-W_{p'p}^{\varepsilon,2}(x^{\rm min}) \geq  \frac{\varepsilon}{3C_k}\left(\varepsilon/C_{k,r}p^{k+1}\right)^k$. In particular, by Proposition \ref{periodicminimizersproposition} such an $x$ can not be a minimizer. \\
\indent We now let $\Omega$ be rotation number that satisfies (\ref{QPestimate}) and we assume that $x \in \mathcal{B}_{\Omega}$ is a maximally periodic Birkhoff minimizer of rotation number $\Omega$ for the $S_j^{\varepsilon,2}$ that satisfies the estimates $\eta \leq x_0\leq \eta+\varepsilon/2C_{k,r}p^{k+1}$. We will show that these assumptions lead to a contradiction.  \\
\indent To do this, we choose an integer $N \geq 30$ with
 \begin{align}\label{choiceN2} 
  3\cdot \frac{C_{k,r}}{\varepsilon} \left( \frac{C_{k,r}p^{k+1}}{\varepsilon}\right)^{k} \leq N \leq 3\cdot\frac{11}{10}\cdot \frac{C_{k,r}}{\varepsilon}\left( \frac{C_{k,r}p^{k+1}}{\varepsilon}\right)^{k}\ .
\end{align}
Because we required that $C_{k,r}/\varepsilon \geq10$ and because $k\geq 0$, it is clear that such $N$ exists. 
\\
\indent Next, we recall from Proposition \ref{propertiesptilde} that we defined $p'p=m\tilde p$ and $p'q+1=m\tilde q$, with $\tilde p$ and $\tilde q$ relative prime. We continue the proof by selecting an integer $i_0$ with $-\tilde p < i_0 \leq 0$ and
with the property that 
\begin{align} \label{basicclosenessestimate2}
|| \tau^{-n}_{\tilde p, \tilde q}x - \tau^{-m}_{\tilde p, \tilde q} x||_{l_1[i_0-r, i_0+r-1]} <  \frac{10}{4} \frac{N\cdot\gamma\cdot r}{p^{\tau-1}} \ \mbox{for all} \ 0\leq m, n \leq N\ .
\end{align}
Using the near-periodicity Theorem \ref{regularitytheorem}, we will now show that such an integer exists.  
\begin{proposition} \label{i0existence} When $p$ and $q$ satisfy assumptions {\bf A1} and {\bf A3},  $N\geq 30$ and $x\in \mathcal{B}_{\Omega}$ is maximally periodic, then there exists an integer $i_0$ with $-\tilde p< i_0 \leq 0$ for which (\ref{basicclosenessestimate2}) holds.
\end{proposition}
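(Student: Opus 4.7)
The plan is to apply Theorem \ref{regularitytheorem} directly to $x\in\mathcal{B}_{\Omega}$ with the substitution $(p,q,\omega)\mapsto(\tilde p,\tilde q,\Omega)$. This is legitimate since $\tilde p$ and $\tilde q$ are relative prime by Proposition \ref{propertiesptilde} and $x$ is maximally periodic by hypothesis. I would take $i_1:=-\tilde p-r+1$ and $i_2:=N\tilde p+r-1$, so that for any $i_0\in(-\tilde p,0]$ and any $m,n\in\{0,\ldots,N\}$, both $i_0+m\tilde p$ and $i_0+n\tilde p$ lie in $[i_1+r,i_2-r+1]=[-\tilde p+1,N\tilde p]$; hence every pair $(m,n)\in\{0,\ldots,N\}^2$ falls within the admissibility window of the theorem. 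The $i_0$ produced by the pigeonhole Proposition \ref{pigeonholeproposition} is only determined modulo $\tilde p$, because $(U_{\tilde p,\tilde q}^a y)_j-y_j$ is $\tilde p$-periodic in $j$ (using $(\tilde p,\tilde q)$-periodicity of the auxiliary sequence $y$). Therefore $i_0$ may be translated by a multiple of $\tilde p$ into $(-\tilde p,0]$ without weakening the bound.

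With this setup the theorem yields
$$||\tau_{\tilde p,\tilde q}^{-n}x-\tau_{\tilde p,\tilde q}^{-m}x||_{l_1[i_0-r,i_0+r-1]}\leq\frac{2r\,a}{\tilde p},\qquad a=\lceil(i_2-i_1)\,|\tilde q-\Omega\tilde p|\rceil.$$
To estimate $a$, I would combine the identity $\tilde q/\tilde p=q/p+1/(p'p)$ from the proof of Proposition \ref{propertiesptilde} with hypothesis (\ref{QPestimate}) to obtain
$$0\leq\Omega-\frac{\tilde q}{\tilde p}\leq\frac{1}{(p'-1)p}-\frac{1}{p'p}=\frac{1}{p'(p'-1)p}.$$
This gives $|\tilde q-\Omega\tilde p|\leq\tilde p/(p'(p'-1)p)\leq 1/(p'-1)$, where the last step uses $\tilde p\leq p'p$ from Proposition \ref{propertiesptilde}. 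Together with $i_2-i_1=(N+1)\tilde p+2r-2$ and $\tilde p\geq p'$, one arrives at
$$\frac{2ra}{\tilde p}\leq\frac{2r(N+1)}{p'-1}+\frac{2r(2r-2)}{p'(p'-1)p}+\frac{2r}{\tilde p}.$$

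The final step is to feed in the quantitative hypotheses. From (\ref{bothestimates1}) and {\bf A1} we have $p'>p^{\tau-1}/\gamma$; from {\bf A3} we get $p'\geq 11$, so $p'-1\geq\tfrac{10}{11}p'$ and hence $1/(p'-1)\leq 11\gamma/(10p^{\tau-1})$; and $r\gamma\leq p^{\tau-1}$ from {\bf A3} renders the remainder terms of size $O(r\gamma/p^{\tau-1})$. The bound then reads $\tfrac{11(N+1)}{5}\cdot r\gamma/p^{\tau-1}$ plus lower-order contributions. The target bound $\tfrac{10}{4}\cdot Nr\gamma/p^{\tau-1}=\tfrac{5N}{2}\cdot r\gamma/p^{\tau-1}$ reduces to the elementary inequality $22(N+1)+O(1)<25N$, and the assumption $N\geq 30$ leaves a margin of at least $3N-22\geq 68$ in which to absorb the residuals with strict inequality.

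The main obstacle is purely numerical bookkeeping in this final step: the passage from $p'$ to $p'-1$, the $+1$ from the ceiling in the definition of $a$, and the $2r-2$ correction in $i_2-i_1$ each contribute a term of order $r\gamma/p^{\tau-1}$, and each must fit inside the slack that $N\geq 30$ provides. The assumption $r\gamma\leq p^{\tau-1}$ from {\bf A3} is precisely what makes these residual terms genuinely sub-leading, so that the chain of inequalities closes with the strict inequality required by (\ref{basicclosenessestimate2}).
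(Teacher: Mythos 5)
Your proof is correct and follows essentially the same route as the paper: apply Theorem \ref{regularitytheorem} to $x$ with the pair $(\tilde p,\tilde q)$ over a window covering $\{i_0+m\tilde p : 0\leq m\leq N\}$, choose $i_0\in(-\tilde p,0]$ using the $\tilde p$-periodicity of the pigeonhole estimate, and then bound $a/\tilde p$ using {\bf A1}, {\bf A3} and the $\tilde p\leq p'p$ bound from Proposition \ref{propertiesptilde}. The only cosmetic difference is your choice of window endpoints: the paper takes $i_1=-2\tilde p$ and $i_2=(N+1)\tilde p$, so that $i_2-i_1=(N+3)\tilde p$ is a clean multiple of $\tilde p$ and avoids the extra $2r-2$ correction term that appears in your bookkeeping (both choices close the chain with room to spare once $N\geq 30$).
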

\begin{proof}
Because $x$ is maximally periodic, we can apply Theorem \ref{regularitytheorem}. We choose $i_1=-2\tilde p$ and $i_2=(N+1)\tilde p$, so that $i_2-i_1 = (N+3)\tilde p$. 
\\ \indent 
Recall that by Proposition \ref{propertiesptilde} it holds that $p' \leq \tilde p \leq p'p$ and hence by 
(\ref{bothestimates1}) that 
\begin{align}\label{bothestimates2}
\frac{p^{\tau-1}}{\gamma} < \tilde p < p\left(1+\frac{p^{\tau}}{\gamma}\right)\ .
\end{align}
Using assumption {\bf A3} we therefore have that $\tilde p > \frac{p^{\tau-1}}{\gamma}\geq r$ and it apparently holds that $i_1+r < -\tilde p$ and $N\tilde p < i_2-r$. Therefore, Theorem \ref{regularitytheorem} says that there exists an $i_0$ with $-\tilde p < i_0 \leq 0$ so that for all $0\leq m,n\leq N$, it holds that 
\begin{align}
|| \tau^{-n}_{\tilde p, \tilde q}x - \tau^{-m}_{\tilde p, \tilde q} x||_{l_1[i_0-r, i_0+r-1]}  \leq \frac{2r}{\tilde p} a(\tilde p, \tilde q, \Omega, (N+3)\tilde p)\ .
\end{align}
Thus, it remains to show that
\begin{align}\label{estimate2}
\frac{a(\tilde p, \tilde q, \Omega, (N+3)\tilde p)}{\tilde p}= \frac{1}{\tilde p}\left\lceil (N+3)\tilde p\left| \tilde q - \Omega \tilde p\right| \right\rceil<   \frac{5}{4} \frac{N\cdot\gamma}{p^{\tau-1}}.
 \end{align}
Let us first estimate $\left| \tilde q - \Omega\tilde p \right|$. From (\ref{QPestimate}) we have $\frac{\tilde q}{\tilde p}< \Omega < \frac{\tilde q}{\tilde p} + \frac{1}{p'(p'-1)p}$. 
Therefore, 
 \begin{align}\label{qp-QP}
 \left| \tilde q - \Omega \tilde p \right| \leq \frac{\tilde p}{p'(p'-1)p} \leq \frac{1}{p'-1} < \frac{11}{10}\frac{1}{p'} <  \frac{11}{10} \frac{\gamma}{p^{\tau-1}}\ .
 \end{align}
The first estimate in (\ref{qp-QP}) holds because $\tilde p\leq p'p$, see Proposition \ref{propertiesptilde}. 
The second estimate in (\ref{qp-QP}) follows from (\ref{bothestimates1}) and assumption {\bf A3} that together guarantee that $p'>\frac{p^{\tau-1}}{\gamma} \geq 10$, so that $p'-1 > \frac{10}{11}p'$. The final estimate follows from (\ref{bothestimates1}).\\ 
 \indent
Now we can estimate:
\begin{align}\nonumber
 \frac{1}{\tilde p} & \left\lceil (N+3)\tilde p\left| \tilde q - \Omega \tilde p\right| \right\rceil  
 \leq \frac{1}{\tilde p}  +  (N+3)\left| \tilde q-\Omega \tilde p \right|  
 \\ \nonumber & <
 \frac{\gamma}{p^{\tau-1}}  + (N+3)\frac{11}{10}\frac{\gamma}{p^{\tau-1}} < \frac{5}{4} \frac{N\cdot\gamma}{p^{\tau-1}} \ .
 \end{align}
The first inequality follows from the definition of the ceiling function. The second inequality follows estimates (\ref{bothestimates2}) and (\ref{qp-QP}). The final inequality follows because $N\geq 30$. 
\end{proof}
Assumption {\bf A2}, see (\ref{nrequirement}), can also be written as 
\begin{align}\label{A2alternative}
\frac{\gamma}{p^{\tau-1}}\leq \frac{1}{10} \left(\frac{\varepsilon}{C_{k,r}}\right)^2\left( \frac{\varepsilon}{C_{k,r}p^{k+1}}\right)^{2k}\ .
\end{align}
Combining estimates (\ref{choiceN2}), (\ref{basicclosenessestimate2}) and (\ref{A2alternative}), we therefore find that for our choice of $N$ and $i_0$ it holds that
\begin{align}\label{basicclosenessestimate}
|| \tau^{-n}_{\tilde p, \tilde q}x - \tau^{-m}_{\tilde p, \tilde q} x||_{l_1[i_0-r, i_0+r-1]} <  \frac{r\cdot \varepsilon}{C_{k,r}}\left( \frac{\varepsilon}{C_{k,r}p^{k+1}}\right)^{k} \ \mbox{for all} \ 0\leq m, n \leq N\ .
\end{align}
\noindent Having chosen $N$ and $i_0$ in this way, the remainder of the proof will be based on a close investigation of $x$ over the long segment 
$$[i_0, i_0+N\tilde p -1] = \bigcup_{n=0}^{N-1}[i_0+n\tilde p, i_0+(n+1)\tilde p-1]\ .$$
\noindent In fact, we will separately investigate the following two possibilities. Below, we let $x^{\rm min}\in \mathcal{B}_{\tilde p, \tilde q}=\mathcal{B}_{p'p, p'q+1}$ be any $(\tilde p, \tilde q)$-periodic minimizer. 

\begin{itemize}
\item[{\bf 1.}] It may hold that $x$ has a quite large action on all short subsegments. That is, for all $n$ with $0\leq n\leq N-1$ we have 
$$W^{\varepsilon, 2}_{[i_0+n\tilde p, i_0+(n+1)\tilde p-1]}(x) - W^{\varepsilon, 2}_{\tilde p}(x^{\rm min}) \geq  \frac{\varepsilon}{6C_k}\left( \frac{\varepsilon}{C_{k,r}p^{k+1}}\right)^k \ .$$

\item[{\bf 2.}] The other option is that $x$ has a relatively small action on one of the short subsegments. That is, there is an $0\leq n^*\leq N-1$ so that 
$$W^{\varepsilon, 2}_{[i_0+n^*\tilde p, i_0+(n^*+1)\tilde p-1]}(x) - W^{\varepsilon, 2}_{\tilde p}(x^{\rm min}) <   \frac{\varepsilon}{6C_k}\left( \frac{\varepsilon}{C_{k,r}p^{k+1}}\right)^k \ .$$
\end{itemize}
We will analyze these two cases separately now.
 \subsubsection*{Case 1}
In this case,
\begin{align}\nonumber
W^{\varepsilon, 2}_{[i_0, i_0+N\tilde p-1]}(x)-W^{\varepsilon, 2}_{[i_0, i_0+N\tilde p-1]}(x^{\rm min})  = & \sum_{n=0}^{N-1} \left( W^{\varepsilon,2}_{[i_0+n\tilde p, i_0+(n+1)\tilde p-1]}(x) - W^{\varepsilon, 2}_{\tilde p}(x^{\rm min})\right) \\  \geq \frac{N\varepsilon}{6C_k}\left( \frac{\varepsilon}{C_{k,r}p^{k+1}}\right)^k & > 12Cr(2r+1)  \ .\label{orderoneaction}
\end{align}
The final inequality in (\ref{orderoneaction}) holds because of our choice of $N$ in (\ref{choiceN2}), the definition of $C_{k,r}=12CC_k(2r+1)^2$ and the fact that $(2r+1)>2r$. \\
\indent We conclude that in case 1, $W^{\varepsilon,2}_{[i_0, i_0+N\tilde p-1]}(x)$ is much larger than $W^{\varepsilon,2}_{[i_0, i_0+N\tilde p-1]}(x^{\rm min})$. Of course, this does not mean that $x$ is not a global minimizer, because $x^{\rm min}$ is not a finite support variation of $x$. So let us try and compare $x$ to a finite support variation that looks like $x^{\rm min}$ on the very long segment. More precisely, we define $\tilde x \in \R^{\Z}$ by
$$\tilde x_j:= \left\{ \begin{array}{ll} x_j^{\rm min} & \mbox{if}\ i_0 + r\leq j \leq i_0+ N\tilde p -r -1\\
x_{j} & \mbox{otherwise} \end{array} \right. \ .$$
Clearly, $\tilde x$ is a variation of $x$ with support in $[i_0+r, i_0+ N\tilde p -r-1]$. 
We will show that $W^{\varepsilon, 2}_{[i_0,i_0+N\tilde p-1]}(x)-W^{\varepsilon, 2}_{[i_0,i_0+N\tilde p-1]}(\tilde x) >0$. This means that $x$ is not a global minimizer. \\
\indent In view of (\ref{orderoneaction}), it would therefore be useful to have an estimate on 
$$\left| W^{\varepsilon, 2}_{[i_0, i_0+N\tilde p-1]}(\tilde x)-W^{\varepsilon, 2}_{[i_0, i_0+N\tilde p-1]}(x^{\rm min})\right| \ .$$ 
Such an estimate is not hard to obtain, because the definition of $\tilde x$ implies that $\tilde x = x^{\rm min}$ on $[i_0+r, i_0+N\tilde p -r-1]$. This implies that
$$||\tilde x - x^{\rm min}||_{l_1[i_0-r, i_0+N\tilde p + r-1]} = ||x-x^{\rm min}||_{l_1[i_0-r, i_0+r-1]} + ||x-x^{\rm min} ||_{l_1[i_0+N\tilde p-r, i_0+N\tilde p +r-1]}\ .$$
\noindent We will now compute these norms. In fact, we can assume without loss of generality that $|x_0-x_0^{\rm min}| \leq \frac{1}{2}$, so that by (\ref{poincareestimate}) we know that
\begin{align}\nonumber
&|x_j-x_j^{\rm min}| = \left| \left(x_j- x_0 - \Omega\cdot j\right) - \left(x_j^{\rm min}-x_0^{\rm min}-\frac{\tilde q}{\tilde p}j\right) + (x_0-x_0^{\rm min})+ \left(\Omega-\frac{\tilde q}{\tilde p}\right) j\right| \\ 
\leq  & \left| x_j-x_0-\Omega\cdot j\right| + \left| x_j^{\rm min}-x_0^{\rm min} -\frac{\tilde q}{\tilde p}j \right| + \left| x_0-x_0^{\rm min} \right| +\left|\frac{\tilde q}{\tilde p} -\Omega \right| \cdot |j|  \leq \frac{5}{2}+\left|\frac{\tilde q}{\tilde p} - \Omega\right| \cdot |j|\ . \nonumber
\end{align}
Let now $j\in \Z$ be so that $i_0 -r\leq j \leq i_0+r-1$ or $i_0+N\tilde p -r \leq j \leq i_0+N\tilde p-1+r$. Using that $-\tilde p < i_0 \leq 0$ and that by (\ref{bothestimates2}) and assumption {\bf A3} it holds that $\tilde p > \frac{p^{\tau-1}}{\gamma}\geq r$, we find that for such $j$ it holds that $-2\tilde p < j < (N+1)\tilde p$.
Thus, for such $j$ we can estimate 
$$|x_j-x_j^{\rm min}| \leq \frac{5}{2} + \left|\tilde q - \Omega \tilde p\right| (N+1) < 3 \ ,$$
the last inequality following from the estimates (\ref{choiceN2}), (\ref{estimate2}) and (\ref{A2alternative}) and the assumption that $\frac{\varepsilon}{C_{k,r}}\leq \frac{1}{10}$. 
We conclude that
$$||\tilde x - x^{\rm min}||_{l_1[i_0-r, i_0+N\tilde p + r-1]} = ||x-x^{\rm min}||_{l_1[i_0-r, i_0-1]} + ||x-x^{\rm min} ||_{l1[i_0+N\tilde p, i_0+N\tilde p +r-1]} < 3\cdot 4r. $$
Therefore, by Proposition \ref{lipschitzcontinuity} on Lipschitz continuity it holds that 
\begin{align}\label{otheractiondifference}
\left|W^{\varepsilon,2}_{[i_0,i_0+N\tilde p-1]}(\tilde x)-W^{\varepsilon,2}_{[i_0, i_0+N\tilde p-1]}(x^{\rm min})\right| < 12Cr(2r+1)\ .
\end{align}
Combining (\ref{orderoneaction}) and (\ref{otheractiondifference}), we obtain that 
\begin{align}\nonumber
W^{\varepsilon,2}_{[i_0,i_0+N\tilde p-1]}(x)&-W^{\varepsilon,2}_{[i_0,i_0+N\tilde p-1]}(\tilde x) = \\
\left(W^{\varepsilon,2}_{[i_0,i_0+N\tilde p-1]}(x)-W^{\varepsilon,2}_{[i_0,i_0+N\tilde p-1]}(x^{\rm min})\right)  & + \left(W^{\varepsilon,2}_{[i_0,i_0+N\tilde p-1]}(x^{\rm min})-W^{\varepsilon,2}_{[i_0,i_0+N\tilde p-1]}(\tilde x)\right) > 0\ . \nonumber
\end{align}
\noindent This means that $x$ is not a global minimizer. 

\subsubsection*{Case 2}
This is the more subtle case. By the shift-invariance of the potentials, $W^{\varepsilon, 2}_{[i_0, i_0+\tilde p -1]}(\tau^{-n^*}_{\tilde p, \tilde q}x) = W^{\varepsilon, 2}_{[i_0+n^*\tilde p, i_0+(n^*+1)\tilde p -1]}(x)$, so case 2 can also be formulated as 
\begin{align}\label{case2estimatetranslated}
W^{\varepsilon, 2}_{[i_0, i_0+\tilde p -1]}(\tau^{-n^*}_{\tilde p, \tilde q}x)- W^{\varepsilon, 2}_{\tilde p}(x^{\rm min})  < \frac{1}{2}\frac{\varepsilon}{3C_k}\left( \frac{\varepsilon}{C_{k,r}p^{k+1}}\right)^k.
\end{align}
This inspires us to change $x$ on a short segment. In fact, we define $\tilde x \in \R^{\Z}$ by
$$\tilde x_j:= \left\{ \begin{array}{ll}  (\tau^{-n^*}_{\tilde p, \tilde q} x)_j = x_{j+n^*\tilde p} - n^*\tilde q & \mbox{if}\ i_0+r\leq j \leq i_0+\tilde p-r-1 \\
x_{j} & \mbox{otherwise} \end{array} \right. \ .$$
Clearly, $\tilde x$ is a variation of $x$ with support in $[i_0+r, i_0+\tilde p-r-1]$. We will show that $W^{\varepsilon,2}_{[i_0, i_0+\tilde p-1]}(x) - W^{\varepsilon,2}_{[i_0, i_0+\tilde p-1]}(\tilde x)>0$, meaning that $x$ is not a global minimizer.\\
\indent To prove this, we need to make several estimates. We first estimate 
$$\left|W_{[i_0, i_0+\tilde p-1]}^{\varepsilon, 2}(\tilde x)- W_{[i_0, i_0+\tilde p-1]}^{\varepsilon, 2}(\tau^{-n^*}_{\tilde p, \tilde q}x)\right|\ .$$
In fact, the definition of $\tilde x$ implies that $\tilde x_j= (\tau^{-n^*}_{\tilde p, \tilde q}x)_j$ for all $i_0+r \leq j \leq i_0+\tilde p -r -1$ and otherwise $\tilde x_j=x_j$.  Therefore,
\begin{align}
|| \tilde x - \tau^{-n^*}_{\tilde p, \tilde q} & x ||_{l_1[i_0-r, i_0 +\tilde p+r-1]} = || x - \tau^{-n^*}_{\tilde p, \tilde q} x||_{l_1[i_0-r, i_0+r-1]} + || x - \tau^{-n^*}_{\tilde p, \tilde q} x ||_{l_1[i_0+\tilde p-r, i_0+\tilde p + r -1]}  \nonumber \\ 
&= || x-  \tau^{-n^*}_{\tilde p, \tilde q} x  ||_{l_1[i_0-r, i_0+r-1]} + ||\tau^{-1}_{\tilde p, \tilde q} x - \tau^{-(n^*+1)}_{\tilde p, \tilde q} x  ||_{l_1[i_0-r, i_0+r-1]}  \ . 
\nonumber
\end{align}
Because $0\leq n^*\leq N-1$, estimate (\ref{basicclosenessestimate}) applies to both terms and we have that
$$|| \tilde x - \tau^{-n^*}_{\tilde p, \tilde q} x ||_{l_1[i_0-r, i_0 +\tilde p+r-1]}   \leq   \frac{2r\cdot \varepsilon}{C_{k,r}} 
\left( \frac{\varepsilon}{C_{k,r}p^{k+1} } \right)^k .
$$
Hence it follows from Lipschitz continuity and the definition $C_{k,r}=12 C C_k(2r+1)^2$ that
\begin{align}\label{xtildeestimate}
\left| W^{\varepsilon,2}_{[i_0, i_0+\tilde p-1]}(\tilde x)- W^{\varepsilon,2}_{[i_0, i_0+\tilde p -1]}(\tau_{\tilde p,\tilde q}^{-n^*} x) \right| < \frac{1}{4} \frac{\varepsilon}{3C_k}\left( \frac{\varepsilon}{C_{k,r}p^{k+1}}\right)^k.
\end{align}
We continue the proof by defining one more sequence $\hat x \in \R^{\Z}$. It is the $(\tilde p, \tilde q)$-periodic extension of $x|_{[i_0, i_0+\tilde p-1]}$, that is 
\begin{align}\nonumber
 \hat x_j:=   (\tau^m_{\tilde p, \tilde q}x)_{j}=x_{j-m\tilde p} +m\tilde q\ \mbox{when}\ i_0+ m\tilde p \leq j \leq i_0+ (m+1)\tilde p -1\ .\end{align}
It is clear that $\hat x\in \X_{\tilde p, \tilde q}\subset \X_{p'p, p'q+1}$. We will provide two estimates for $\hat x$. First of all, 
because $\hat x_j=x_j$ for $i_0 \leq j \leq i_0+\tilde p -1$ and because 
$i_0 \leq 0< i_0+\tilde p$, it holds that $\hat x_0 = x_0$. This implies, by Theorem \ref{sepsilon2} and because $\eta \leq x_0\leq \eta+\varepsilon/2C_{k,r}p^{k+1}$, that
\begin{align}\label{otherhatxestimate}
W_{[i_0, i_0+\tilde p-1]}^{\varepsilon,2}(\hat x)-W_{\tilde p}^{\varepsilon, 2}(x^{\rm min})=W_{\tilde p}^{\varepsilon,2}(\hat x)-W_{\tilde p}^{\varepsilon, 2}(x^{\rm min})\geq \frac{\varepsilon}{3C_k}\left(\frac{\varepsilon}{C_{k,r}p^{k+1}}\right)^k.
\end{align}
The second estimate for $\hat x$ is similar to (\ref{xtildeestimate}). The key observation is that 
\begin{align}\nonumber
& ||\hat x-x||_{l_1[i_0-r, i_0+\tilde p+r-1]} =  ||\tau^{-1}_{\tilde p, \tilde q} x-x||_{l_1[i_0-r, i_0-1]} + ||\tau^{+1}_{\tilde p, \tilde q} x-x||_{l_1[i_0+\tilde p, i_0+\tilde p+r-1]} \\
\nonumber & = ||\tau^{-1}_{\tilde p, \tilde q}x-x||_{l_1[i_0-r, i_0-1]} + ||x - \tau^{-1}_{\tilde p, \tilde q}x||_{l_1[i_0, i_0+r-1]} = ||x-\tau^{-1}_{\tilde p, \tilde q}x||_{l_1[i_0-r, i_0+r-1]} \ . 
\end{align}
With the help of (\ref{basicclosenessestimate}) and Proposition \ref{lipschitzcontinuity} this leads to the estimate
\begin{align}\label{hatxestimate}
 \left| W^{\varepsilon,2}_{[i_0, i_0+\tilde p-1]}(x)- W^{\varepsilon,2}_{[i_0, i_0+\tilde p -1]}(\hat x) \right| < \frac{1}{8} \frac{\varepsilon}{3C_k}\left( \frac{\varepsilon}{C_{k,r}p^{k+1}}\right)^k.
 \end{align}
Combining estimates (\ref{case2estimatetranslated}), (\ref{xtildeestimate}), (\ref{otherhatxestimate}) and (\ref{hatxestimate}), we now conclude that 
\begin{align}\nonumber
W^{\varepsilon,2}_{[i_0, i_0+\tilde p-1]}(x) & - W^{\varepsilon,2}_{[i_0, i_0+\tilde p-1]}(\tilde x) = \\ \nonumber
 \left( W^{\varepsilon,2}_{[i_0, i_0+\tilde p-1]}(x) - W^{\varepsilon,2}_{[i_0, i_0+\tilde p -1]}(\hat x) \right) & + \left( W^{\varepsilon,2}_{[i_0, i_0+\tilde p-1]}(\hat x) - W^{\varepsilon,2}_{\tilde p}(x^{\rm min}) \right) +  \\  
\left( W^{\varepsilon,2}_{\tilde p}(x^{\rm min}) - W^{\varepsilon,2}_{[i_0, i_0+\tilde p-1]}(\tau^{n^*}_{\tilde p, \tilde q} x) \right)  & +  \left( W^{\varepsilon,2}_{[i_0, i_0+\tilde p-1]}(\tau^{n^*}_{\tilde p, \tilde q} x) - W^{\varepsilon,2}_{[i_0, i_0+\tilde p-1]}(\tilde x) \right)  \geq \nonumber \\
  - \! \frac{1}{8} \frac{\varepsilon}{3C_k}\!\left( \frac{\varepsilon}{C_{k,r}p^{k+1}}\right)^k\! +\! \frac{\varepsilon}{3C_k}\! \left( \frac{\varepsilon}{C_{k,r}p^{k+1}}\right)^k \!& -\! \frac{1}{2}\! \frac{\varepsilon}{3C_k}\! \left( \frac{\varepsilon}{C_{k,r}p^{k+1}}\right)^k\!  -\! \frac{1}{4}\frac{\varepsilon}{3C_k}\! \left( \frac{\varepsilon}{C_{k,r}p^{k+1}}\right)^k \! >0\ .
\nonumber
\end{align}
We thus see that $x$ is not a global minimizer. \\
\indent This finishes the proof of Theorem \ref{thebigresult}. \hfill $\square$

\section{Proof of Theorem \ref{maintheorem2}}\label{destructionproofsection}
Even when the Aubry-Mather set $\mathcal{M}^{\omega}$ for a collection of local potentials $S_j$ is a Cantor set, these potentials may still admit a foliation of minimizers of rotation number $\omega$. In the language of Bangert \cite{bangert87}, this means that a lot of the minimizers in this foliation are ``nonrecurrent''. Nevertheless, as the following theorem shows, it is easy to remove these nonrecurrent minimizers by a smooth perturbation of the potentials. We remark that Theorem \ref{nofoliationtheorem} does not have an immediate holomorphic counterpart.

\begin{theorem}\label{nofoliationtheorem}
Assume that the $S_j$ are local potentials that satisfy conditions {\bf A}-{\bf E} of Section \ref{problemsetup} and let $\varepsilon>0$ be a perturbation parameter, $k\in \N_{\geq 2}$ a differentiability degree and $\omega\in\R \backslash \Q$ a rotation number. Assume moreover that the Aubry-Mather set $\mathcal{M}^{\omega}$ of rotation number $\omega$ for the local potentials $S_j$ is a Cantor set.
\\ \indent Then there exist local potentials $S^{\varepsilon}_j$ that satisfy conditions {\bf A}-{\bf E} of Section \ref{problemsetup} and the estimate 
$$||S_j^{\varepsilon} - S_j||_{C^k}\leq \varepsilon,$$
for which the Aubry-Mather set remains unchanged, i.e. $\mathcal{M}^{\omega, \varepsilon}=\mathcal{M}^{\omega}$, while at the same time the $S^{\varepsilon}_j$ admit no Birkhoff minimizers of rotation number $\omega$ outside this Aubry-Mather set.
\end{theorem}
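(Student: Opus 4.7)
Let $\Sigma := \{y_0 : y \in \mathcal{M}^{\omega}\} \subset \R$; because $\mathcal{M}^{\omega}$ is a Cantor set and the map $y\mapsto y_0$ is continuous and injective by strict ordering, $\Sigma$ is a closed $\Z$-invariant Cantor-like subset of $\R$ with complement $G$ a countable disjoint union of open ``gaps''. My plan is to perturb inside these gaps: using the construction of Lemma \ref{phi}, in each gap $(a_n, b_n)$ of $\Sigma/\Z$ I would place a smooth bump $\phi_n$ supported in $(a_n, b_n)$ with $\|\phi_n\|_{C^k} \leq \varepsilon(b_n-a_n)^k/C_k$, and extend $1$-periodically. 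The resulting $\phi \in C^{\infty}(\R)$ satisfies $\phi \geq 0$, $\phi|_{\Sigma} = 0$, $\phi|_{G} > 0$ and $\|\phi\|_{C^k} \leq \varepsilon$ (the factor $(b_n - a_n)^k$ ensures smoothness across the accumulation points of $\Sigma$). Setting $S_j^{\varepsilon}(x) := S_j(x) + \phi(x_j)$, the verification that $S_j^{\varepsilon}$ inherits conditions \textbf{A}-\textbf{E} and satisfies $\|S_j^{\varepsilon} - S_j\|_{C^k} \leq \varepsilon$ is identical to the one in Theorem \ref{destructionperiodic}.

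The easier direction is $\mathcal{M}^{\omega,\varepsilon} = \mathcal{M}^{\omega}$. For every $y \in \mathcal{M}^{\omega}$, shift-invariance gives $y_j \in \Sigma$ and hence $\phi(y_j) = 0$ for all $j$; combined with $\phi \geq 0$, for any finite-support variation $z$ one has $W^{\varepsilon}(y+z) - W^{\varepsilon}(y) \geq W^0(y+z) - W^0(y) \geq 0$, so $y$ remains a minimizer of $S_j^{\varepsilon}$. Therefore $\mathcal{M}^{\omega}$ is still a nonempty, closed, strictly ordered, shift-invariant, and minimal collection of $S_j^{\varepsilon}$-minimizers of rotation number $\omega$, and the uniqueness of the Aubry-Mather set forces $\mathcal{M}^{\omega,\varepsilon} = \mathcal{M}^{\omega}$.

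The harder part is to show that no Birkhoff minimizer of rotation number $\omega$ lies outside $\mathcal{M}^{\omega}$. Suppose for contradiction $x \in \mathcal{B}_{\omega}$ is an $S_j^{\varepsilon}$-minimizer with $x \notin \mathcal{M}^{\omega}$. Since $\phi(x_j)$ depends only on $x_j$, condition \textbf{D} is inherited by $S_j^{\varepsilon}$, so the weak and strong maximum principles in the sketch of Proposition \ref{periodicminimizersproposition} apply and no two $S_j^{\varepsilon}$-minimizers can cross; in particular, $x$ is strictly ordered with every element of $\mathcal{M}^{\omega}$. Splitting $\mathcal{M}^{\omega}$ into the closed subsets of elements strictly below and strictly above $x$ and passing to sup/inf, the Cantor structure yields an adjacent pair $y^{\pm} \in \mathcal{M}^{\omega}$ with $y^{-}\ll x \ll y^{+}$. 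A short shift-invariance argument (any $z \in \mathcal{M}^{\omega}$ with $z_0 \in (y^-_j, y^+_j)$ would, under $\tau_{j,0}$, produce an element of $\mathcal{M}^{\omega}$ strictly between $y^{\pm}$, contradicting adjacency) shows that $(y^-_j, y^+_j)$ is a gap of $\Sigma$ for every $j$, hence $\phi(x_j) > 0$ for all $j \in \Z$. I would then derive the contradiction with the finite-support variation that makes $x+z$ coincide with $y^-$ on $[A+r, B-r]$ and transition smoothly back to $x$ at both ends: Proposition \ref{lipschitzcontinuity} bounds the boundary defect by a constant $C_0$ and the $W^0$-minimality of $y^-$ (applied to the reverse variation from $y^-$ to $x$) bounds $\sum_{A<j<B}[S_j(y^-)-S_j(x)]$ uniformly in $A,B$, whereas the $\phi$-savings $-\sum_{A<j<B}\phi(x_j)$ are strictly negative. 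The main obstacle is the heteroclinic case in which $x_j$ is forced to approach $\Sigma$ as $|j|\to\infty$ and $\sum_j \phi(x_j)$ is finite: there one must combine two one-sided variations, to $y^-$ on the left and to $y^+$ on the right, and exploit that the asymptotic boundary contributions vanish along the ends while the strictly positive total $\phi$-mass along $x$ still produces a net decrease of $W^{\varepsilon}$, contradicting the minimality of $x$.
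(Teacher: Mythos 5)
Your construction of $\phi$, the perturbation $S_j^{\varepsilon} = S_j + \phi(x_j)$, the verification that $\mathcal{M}^{\omega}$ survives as the Aubry--Mather set, and the reduction to an adjacent pair $y^-\ll x\ll y^+$ with $\phi(x_j)>0$ for all $j$ all track the paper's argument closely (your shift-invariance argument that $(y_j^-,y_j^+)$ is a gap of $\Sigma$ is a correct fill-in of a detail the paper elides).

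The genuine gap is in the final step. You correctly identify that the boundary defect of a two-sided finite-support variation pushing $x$ down to $y^-$ is what must be controlled, and you write that one should ``exploit that the asymptotic boundary contributions vanish along the ends'' — but you never justify this, and it is precisely the nontrivial input. The paper invokes the standard Aubry--Mather fact (with references to Bangert, Moser, and Mramor--Rink) that the gap is $\ell^1$-bounded, $\sum_{j\in\Z}(y_j^+-y_j^-)\leq 1$. This is what forces $x_j - y_j^-\to 0$ as $|j|\to\infty$ (since $0< x_j-y_j^-< y_j^+-y_j^-$) and hence makes the boundary terms of the truncated variation tend to zero via the Lipschitz estimate of Proposition \ref{lipschitzcontinuity}; one then gets a clean contradiction with the strictly positive $\phi$-mass, exactly as in the paper's gap-energy functional $W^{\varepsilon}_{[x^-,x^+]}$. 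Without the $\ell^1$-bound, your boundary defect is merely bounded by a constant while $\sum_j\phi(x_j)$ can be arbitrarily small, so no contradiction results. Your proposed workaround — ``combining two one-sided variations, to $y^-$ on the left and to $y^+$ on the right'' — is not a coherent construction of a single finite-support competitor, and once the $\ell^1$-bound is in hand it is unnecessary: the ordinary two-sided truncation to $y^-$ already closes the argument. So state and cite the $\ell^1$-boundedness of the gap; it is the lemma your proof is implicitly relying on.
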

\begin{proof}
We denote by $\Sigma^{\omega}$ the one-dimensional projection of $\mathcal{M}^{\omega}$:
$$\Sigma^{\omega}:=\{ x_0 \  |\ x\in \mathcal{M}^{\omega}\} \subset \R\ .$$
It is clear that $\Sigma^{\omega}$ is invariant under the integer translation $\xi\mapsto \xi+1$. Moreover, because the map $x\mapsto x_0$ from $\mathcal{M}^{\omega}$ into $\R$ is a homeomorphism, see \cite{moser86}, $\Sigma^{\omega}$ is a Cantor subset of $\R$. In particular, it is closed.  Therefore, there exists a $1$-periodic $C^{\infty}$ function $\phi:\R\to\R$ with $||\phi||_{C^k}\leq \varepsilon$ and
$$\phi(\xi)  \left\{ \begin{array}{ll} = 0 & \mbox{for}  \ \xi \in \Sigma^{\omega}, \\ > 0 & \mbox{for} \ \xi \notin \Sigma^{\omega}. \end{array} \right. 
 $$
We define the new potentials $S_j^{\varepsilon}$ by $S_j^{\varepsilon}(x):=S_j(x)+\phi(x_j)$, so that clearly $||S_j^{\varepsilon}-S_j||_{C^k}\leq \varepsilon$. We claim that the Aubry-Mather set $\mathcal{M}^{\omega, \varepsilon}$ of the perturbed potentials $S_j^{\varepsilon}$ equals the old Aubry-Mather set $\mathcal{M}^{\omega}$ of the unperturbed potentials $S_j$ and that whenever $x$ is a Birkhoff minimizer for the perturbed potentials, then $x\in \mathcal{M}^{\omega, \varepsilon} = \mathcal{M}^{\omega}$. \\
\indent 
To prove this claim, we first show that when $x\in \mathcal{M}^{\omega}$, then it is a minimizer for the perturbed potentials. Indeed, when $y:\Z\to \R$ is any sequence with finite support, then 
$$``W(x+y)-W(x)"=\sum_{j\in \Z} \left(S_j(x+y)-S_j(x)\right) +\sum_{j\in \Z} \left( \phi(x_j+y_j)-\phi(x_j) \right) \geq 0\ ,$$
because $x$ is a minimizer for the $S_j$, $\phi(x_j)=0$ for all $j$ and $\phi(x_j+y_j)\geq 0$ for all $j$. In particular, this shows that $\mathcal{M}^{\omega, \varepsilon} = \mathcal{M}^{\omega}$.\\
\indent Next, we let $x\in \mathcal{B}_{\omega}$ be a minimizer for the perturbed potentials. Let us assume that $x\notin \mathcal{M}^{\omega}$, i.e. that $x$ is ``nonrecurrent'' in the terminology of \cite{bangert87}. We will show that this leads to a contradiction.\\
\indent  By the results of Bangert \cite{bangert87} and our assumption that $\omega\in\R \backslash \Q$, we know that $x$ must then lie in a gap of $\mathcal{M}^{\omega}$. This means that there are $x^-, x^+\in \mathcal{M}^{\omega}$ so that $x^-\ll x \ll x^+$, but that there is no $y\in\mathcal{M}^{\omega}$ with $x^-\ll y \ll x^+$. \\
\indent By a standard result, see for instance \cite{bangert87}, \cite{moser86} or \cite{MramorRink1}, the gap $[x^-, x^+]:= \{ x^-\leq x \leq x^+\}$ is bounded in $l_1(\Z)$. More precisely, it holds that 
$$\sum_{j\in \Z} \left( x_j^+-x_j^- \right) \leq 1\ .$$ 
One can use this, cf. \cite{MramorRink1}, to prove that the function
$$W^{\varepsilon}_{[x^-,x^+]}: \ x\mapsto \sum_{j\in \Z} \left( S_j^{\varepsilon}(x)-S_j^{\varepsilon}(x^-)\right)$$ 
from $[x^-, x^+]$ into $\R$ is well-defined, absolutely convergent, nonnegative and Lipschitz-continuous. And that $x\in [x^-, x^+]$ is a global minimizer if and only if $W^{\varepsilon}_{[x^-, x^+]}(x)=0$. \\
\indent But clearly, when $x^-\ll x\ll x^+$, then
$$W^{\varepsilon}_{[x^-, x^+]}(x) =  \sum_{j\in \Z} \left( S_j(x)-S_j(x^-)\right) + \sum_{j\in \Z}\left(\phi(x_j)-\phi(x_j^-) \right) > 0\ .$$
This inequality holds because $x^-$ is a global minimizer for the $S_j$, because $\phi(x^-_j)=0$ and $\phi(x_j)>0$ for all $j$. This means that $x$ is not a global minimizer for the $S_j^{\varepsilon}$.
\end{proof}
As a consequence of Theorem \ref{nofoliationtheorem}, we obtain 
\begin{theorem}\label{corollaryofnofoliation}
Assume that the $S_j$ are local potentials that satisfy conditions {\bf A}-{\bf E} of Section \ref{problemsetup} and let $\varepsilon>0$ be a perturbation parameter, $k\in \N_{\geq 2}$ a differentiability degree, $\gamma>0$ and $\sigma > 2(1+k+k^2)$ real numbers and $\omega\in \mathcal{L}_{\gamma, \sigma}^-$ a rotation number.
\\ \indent Then there exist local potentials $\tilde S^{\varepsilon}_j$ that satisfy conditions {\bf A}-{\bf E} and the estimate 
$$||\tilde S_j^{\varepsilon} - S_j||_{C^k}\leq \varepsilon,$$
for which the Aubry-Mather set $\mathcal{M}^{\omega, \varepsilon}$ is a Cantor set. Moreover, the $\tilde S^{\varepsilon}_j$ admit no Birkhoff minimizers of rotation number $\omega$ outside this Aubry-Mather set.
\end{theorem}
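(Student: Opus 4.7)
The plan is to obtain $\tilde S_j^{\varepsilon}$ as the composition of two successive small perturbations of size $\varepsilon/2$: a first perturbation that forces the Aubry-Mather set of rotation number $\omega$ to be a Cantor set (via Theorem \ref{maintheoremprecise}), followed by a second perturbation that removes any remaining nonrecurrent Birkhoff minimizers of rotation number $\omega$ without altering that Aubry-Mather set (via Theorem \ref{nofoliationtheorem}).

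First, since $\sigma > 2(1+k+k^2) = 2+2k+2k^2 > 1 + 2k(k+1)$ and $\omega \in \mathcal{L}_{\gamma,\sigma}^-$, the hypotheses of Theorem \ref{maintheoremprecise} are satisfied with perturbation parameter $\varepsilon/2$. This yields local potentials $S_j^{\varepsilon/2}$ obeying conditions {\bf A}-{\bf E} and $||S_j^{\varepsilon/2} - S_j||_{C^k} \leq \varepsilon/2$, together with a $\delta > 0$ and a nonempty interval $(\eta_-, \eta_+)$ such that any maximally periodic Birkhoff minimizer $x$ of $S_j^{\varepsilon/2}$ of rotation number $\Omega$ with $|\omega - \Omega| < \delta$ satisfies $x_0 \notin (\eta_-, \eta_+)$. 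Applying this to $\Omega = \omega$: because $\omega \in \R \setminus \Q$, every element of $\mathcal{M}^{\omega, \varepsilon/2}$ is automatically maximally periodic, and so its projection $\{x_0 \, | \, x \in \mathcal{M}^{\omega, \varepsilon/2}\}$ misses the open interval $(\eta_-, \eta_+)$. This projection is therefore not all of $\R/\Z$ in any interval, and so $\mathcal{M}^{\omega, \varepsilon/2}$ is disconnected; by the dichotomy in Section \ref{classicalresults}, it is a Cantor set.

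Second, apply Theorem \ref{nofoliationtheorem} to the potentials $S_j^{\varepsilon/2}$ with perturbation parameter $\varepsilon/2$. This yields potentials $\tilde S_j^{\varepsilon}$ satisfying {\bf A}-{\bf E} with $||\tilde S_j^{\varepsilon} - S_j^{\varepsilon/2}||_{C^k} \leq \varepsilon/2$, whose Aubry-Mather set of rotation number $\omega$ equals $\mathcal{M}^{\omega, \varepsilon/2}$ (still a Cantor set), and which admit no Birkhoff minimizers of rotation number $\omega$ outside this Aubry-Mather set. The triangle inequality gives
$$||\tilde S_j^{\varepsilon} - S_j||_{C^k} \leq ||\tilde S_j^{\varepsilon} - S_j^{\varepsilon/2}||_{C^k} + ||S_j^{\varepsilon/2} - S_j||_{C^k} \leq \varepsilon,$$
completing the construction.

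There is no genuine obstacle here: the two main ingredients have already been established separately, and the argument merely splits the allotted perturbation size between them. The only book-keeping point is that the threshold $\sigma > 2(1+k+k^2)$ in the corollary is chosen precisely to exceed the threshold $\sigma > 1 + 2k(k+1)$ required by Theorem \ref{maintheoremprecise}, which, as computed above, is immediate.
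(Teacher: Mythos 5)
Your proof is correct and takes essentially the same approach as the paper: apply Theorem \ref{maintheoremprecise} to make the Aubry-Mather set a Cantor set, then apply Theorem \ref{nofoliationtheorem} to remove nonrecurrent Birkhoff minimizers, splitting the perturbation budget between the two steps via the triangle inequality. The only differences are cosmetic (the paper uses a $\frac{2}{3}\varepsilon + \frac{1}{3}\varepsilon$ split rather than your $\frac{\varepsilon}{2} + \frac{\varepsilon}{2}$, exploiting that Theorem \ref{maintheoremprecise} already only consumes $\frac{2}{3}$ of its allotted budget), and you spell out in more detail why the conclusion of Theorem \ref{maintheoremprecise} forces $\mathcal{M}^{\omega}$ to be disconnected, hence a Cantor set, which the paper simply asserts.
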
 
\begin{proof}
The local potentials $S_j^{\varepsilon}$ constructed in Theorem \ref{maintheoremprecise} satisfy $||S_j^{\varepsilon,2}-S_j||_{C^k}\leq \frac{2}{3}\varepsilon$. Their Aubry-Mather set $\mathcal{M}^{\omega, \varepsilon}$ is a Cantor set - and they already do not admit a minimal foliation. By Theorem \ref{nofoliationtheorem}, there exists now a further perturbation $\tilde S_{j}^{\varepsilon}$ of these $S_j^{\varepsilon}$, satisfying the estimate $||\tilde S_j^{\varepsilon}-S_j^{\varepsilon}||_{C^k} \leq\frac{1}{3}\varepsilon$, for which the conclusions of Theorem \ref{corollaryofnofoliation} hold. In particular, $||\tilde S_j^{\varepsilon}-S_j||_{C^k} \leq ||\tilde S_j^{\varepsilon}-S_j^{\varepsilon}||_{C^k} + ||S_j^{\varepsilon}-S_j||_{C^k} \leq \frac{2}{3}\varepsilon + \frac{1}{3}\varepsilon = \varepsilon$. 
\end{proof}
One can formulate a variant of Theorem \ref{corollaryofnofoliation} in the case that $\omega\in \mathcal{L}_{\gamma, \sigma}^+$.  This then proves Theorem \ref{maintheorem2} in the introduction. 

 \begin{small}
\bibliographystyle{amsplain}
\bibliography{destruction}
\end{small}

\end{document}